\newtheorem{thm}{Theorem}
\newtheorem{lem}[thm]{Lemma}
\newtheorem{prop}[thm]{Proposition}
\newtheorem{defn}[thm]{Definition}
\newtheorem{cor}[thm]{Corollary}
\newtheorem{conjecture}[thm]{Conjecture}
\newtheorem{rem}{Remark}
\newcounter{cl}
\newtheorem{claim}[cl]{Claim}
\def\ACC{\mathbf{ACC}_{\mathrm{w}^*}} 
\def\LIM{\mathbf{LIM}_{\mathrm{w}^*}} 
\def\INT{\mathrm{INT}} 
\def\d{\mathrm{d}} 
\def\EXP{\mathbb{E}} \def\PROB{\mathbb{P}} 
\newcommand{\JUSTIFY}[1]{\fbox{\tiny{#1}}\quad} 
\begin{document}
\title{Cut-norm and entropy minimization over weak$^*$ limits}
\author{Martin Dole\v zal}
\author{Jan Hladk\'y}
\address{Institute of Mathematics, Czech Academy of Sciences. \v Zitn\'a 25, 110 00, Praha, Czech Republic. The Institute of Mathematics of the Czech Academy of Sciences is supported by RVO:67985840.}
\email{dolezal|hladky@math.cas.cz}
\thanks{Jan Hladk\'y was supported by the Alexander von Humboldt Foundation. Research of Martin Dole\v zal was supported by the GA\v CR project GA16-07378S.}

\begin{abstract}
We prove that the accumulation points of a sequence of graphs $G_1,G_2,G_3,\ldots$ with respect to the cut-distance are exactly the weak$^*$ limit points of subsequences of the adjacency matrices (when all possible orders of the vertices are considered) that minimize the entropy over all weak$^*$ limit points of the corresponding subsequence. In fact, the entropy can be replaced by any map $W\mapsto \iint f(W(x,y))$, where $f$ is a continuous and strictly concave function. 
As a corollary, we obtain a new proof of compactness of the cut-distance topology.
\end{abstract}

\maketitle

\section{Introduction}
The theory of limits of dense graphs was developed in~\cite{Lovasz2006,Borgs2008c} and has revolutionized graph theory since then. The key objects of the theory are so-called graphons. More precisely, a graphon is a symmetric Lebesgue measurable function from $I^2$ to $[0,1]$ where $I=[0,1]$ is the unit interval (equipped by the Lebesgue measure $\lambda$). In the heart of the theory is then the following statement.
\begin{thm}[Informally]\label{thm:converge}
Suppose that $G_1,G_2,G_3,\ldots$ is a sequence of graphs. Then there exists a subsequence $G_{k_1},G_{k_2},G_{k_3},\ldots$ and a graphon $W:I^2\rightarrow [0,1]$ such that $G_{k_1},G_{k_2},G_{k_3},\ldots$ converges to $W$.
\end{thm}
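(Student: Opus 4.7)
The plan is to derive Theorem~\ref{thm:converge} as a corollary of the paper's main characterization, which identifies the cut-distance accumulation points of a sequence of graphs with those weak*-limits of subsequences of rearranged adjacency graphons that minimize the integral $W \mapsto \iint f\circ W$ for a fixed continuous strictly concave $f$. With this characterization as a black box, the theorem reduces to establishing the existence of such a minimizer for every sequence.

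First I would realize each $G_n$ as its adjacency graphon $W_n:I^2\to\{0,1\}$ by partitioning $I$ into $|V(G_n)|$ equal intervals. All $W_n$ lie in the closed unit ball of $L^\infty(I^2)$, which is weak*-compact by Banach--Alaoglu and, because $L^1(I^2)$ is separable, is metrizable in the weak* topology; in particular, every sequence of rearranged graphons admits a weak*-convergent subsequence. Fix a continuous strictly concave $f:[0,1]\to\mathbb{R}$ (for instance, the entropy density) and set $\Phi(W):=\iint f(W(x,y))\,\mathrm{d}x\,\mathrm{d}y$. A short argument combining Jensen's inequality with martingale convergence shows that $\Phi$ is bounded and upper semicontinuous under weak*-convergence of uniformly bounded graphons.

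The heart of the proof is to construct, by nested refinement, a subsequence $(W_{n_k})$ together with measure-preserving rearrangements of each $W_{n_k}$ so that the resulting rearranged graphons converge weak* to some $W^*$ attaining the infimum of $\Phi$ over all weak*-limits of rearranged sub-subsequences. I would iterate: at each stage, pass to a sub-subsequence whose weak*-limit set contains elements with $\Phi$-value within a vanishing tolerance of the current infimum; since refinement can only shrink the limit set and hence raise the infimum, the successive infima converge, and a diagonal procedure based on weak*-metrizability produces the desired $W^*$. The main theorem of the paper, applied to $W^*$, then yields a further subsequence of $(G_{n_k})$ converging to $W^*$ in cut-distance.

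The main obstacle is proving that the infimum of $\Phi$ is actually attained in the limit of the refinement. Because $\Phi$ is only upper (not lower) semicontinuous under weak*-convergence, standard compactness does not immediately produce a minimizer; one must combine the nested-refinement scheme with the strict concavity of $f$, which controls how much $\Phi$ can jump in a weak*-limit. Aside from this step and the invocation of the main characterization theorem, the argument relies only on Banach--Alaoglu and diagonalization.
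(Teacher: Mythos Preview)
Your overall strategy matches the paper's exactly: reduce Theorem~\ref{thm:converge} to Theorem~\ref{thm:compactANDminimizer}, use part~\ref{en:improve} as a black box, and focus on proving part~\ref{en:compact} (existence of a minimizer of $\INT_f$ over the weak$^*$ accumulation set of some subsequence). Your observation that $\INT_f$ is weak$^*$ upper semicontinuous is correct and underlies the paper's Lemmas~\ref{lem:aproximacePrumerama} and~\ref{lem:concavetriv}.

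The gap is precisely where you flag it, and your proposed resolution is not yet an argument. You write that nested refinement makes the infima increase and converge, and that ``a diagonal procedure based on weak$^*$-metrizability produces the desired $W^*$''. But a naive diagonalization fails for exactly the reason you name: if $W_k\in\ACC(\mathcal S_k)$ are near-minimizers and you diagonalize to a sequence $\mathcal S$, there is no reason any weak$^*$ accumulation point of $\mathcal S$ should have $\INT_f$ close to $\lim_k\INT_f(W_k)$; upper semicontinuity lets $\INT_f$ jump \emph{up} in the limit, which is the wrong direction for a minimizer. Saying that ``strict concavity controls how much $\Phi$ can jump'' is not a mechanism.

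The paper's actual mechanism, which your sketch is missing, is a \emph{reordering} step: at each stage one takes the versions converging to the new near-minimizer and re-aligns them (via measure-preserving bijections) so that their cell-averages over the previous partition $\mathcal J_n$ agree with those of the previous-stage versions. Corollary~\ref{cor:zmensiEntropii} guarantees this reordering does not increase $\INT_f$ of the weak$^*$ limit. One then passes to a step-graphon $\widetilde W_{n+1}$ on a refinement $\mathcal J_{n+1}\succ\mathcal J_n$ (Lemmas~\ref{lem:aproximacePrumerama} and~\ref{lem:attainaveraged}). The nesting of partitions forces any weak$^*$ accumulation point $W_{\min}$ of the diagonal sequence to have the \emph{same} cell-averages as $\widetilde W_n$ on every $\mathcal J_n$, and then Jensen (Lemma~\ref{lem:concavetriv}) gives $\INT_f(W_{\min})\le\INT_f(\widetilde W_n)$ for all $n$. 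Without this alignment-of-cell-averages idea, the diagonal argument does not close.
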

Roughly speaking, to obtain the graphon $W$ one looks at the adjacency matrices of the graphs $(G_{k_n})_n$ from distance. One possible way an analyst might attempt to make this statement formal could be to take $W$ as a weak$^*$ limit\footnote{See the Appendix for basic information about the weak$^*$  topology.} of adjacency matrices of the graphs $(G_{k_n})_n$ represented as functions from $I^2$ to $\{0,1\}$. Such a version of Theorem~\ref{thm:converge} would be just an instance of the Banach--Alaoglu Theorem. However, the weak$^*$ topology turns out to be too coarse to provide the favorable properties that are available in the contemporary theory of graph limits.\footnote{A primal example of such a favorable  property is the continuity of subgraph densities.} A good toy example is the sequence of the complete balanced bipartite graphs $(K_{n,n})_{n=1}^\infty$. When considering adjacency matrices of these graphs with vertices grouped into the two parts of the bipartite graphs, the corresponding weak$^*$ limit is a $2\times 2$-chessboard function with values $0$ and $1$, which we denote by $W_{\mathrm{bipartite}}$. This turns out to be a desirable limit. On the other hand, one could consider adjacency matrices ordered differently. Ordering the vertices randomly, we get the constant $W_{\mathrm{const}}\equiv\frac12$ as the weak$^*$ limit (almost surely). We see that it is undesirable to get $W_{\mathrm{const}}$ as the limit object as the only information carried by such an object is that the overall edge densities of the graphs along the sequence converge to $\frac12$.

So, instead of the weak$^*$ topology one considers the so-called cut-norm topology, and this is also the topology to which ``converges to $W$'' in  Theorem~\ref{thm:converge} refers.
The cut-norm $\|\cdot\|_\square$ is a certain uniformization of the weak$^*$ topology. Indeed, recall that given symmetric measurable functions $\Gamma:I^2\rightarrow[0,1]$ and $\Gamma_1,\Gamma_2,\Gamma_3,\ldots:I^2\rightarrow[0,1]$, the two convergence notions compare as follows. 
\begin{align*}
\Gamma_n \overset{\mathrm{w}^*}\longrightarrow  \Gamma \qquad&\Longleftrightarrow\qquad
\sup_{B\subset I}\left\{\limsup_n
\left|\int_{x\in B}\int_{y\in B} \Gamma_n(x,y)-\Gamma(x,y)\right|\right\}=0\;,\\
\Gamma_n \overset{\|\cdot\|_\square}\longrightarrow  \Gamma \qquad&\Longleftrightarrow\qquad
\limsup_n\left\{\sup_{B\subset I}
\left|\int_{x\in B}\int_{y\in B} \Gamma_n(x,y)-\Gamma(x,y)\right|\right\}=0\;.
\end{align*}
We shall state the formal version of Theorem~\ref{thm:converge} in a somewhat bigger generality for graphons. If $\Gamma,\Gamma':I^2\rightarrow[0,1]$ are two graphons then we say that they are \emph{versions} of each other if they differ only by some measure-preserving transformation of $I$ (see Section~\ref{sec:Notation} for a precise definition).

Then the formal statement of Theorem~\ref{thm:converge} reads as follows.
\begin{thm}\label{thm:convergeFormal}
Suppose that $\Gamma_1,\Gamma_2,\Gamma_3,\ldots:I^2\rightarrow [0,1]$ is a sequence of graphons. Then there exists a sequence $k_1<k_2<k_3<\cdots$ of natural numbers, versions $\Gamma'_{k_1},\Gamma'_{k_2},\Gamma'_{k_3},\ldots$ of $\Gamma_{k_1},\Gamma_{k_2},\Gamma_{k_3},\ldots$, and a graphon $W:I^2\rightarrow [0,1]$ such that the sequence $\Gamma'_{k_1},\Gamma'_{k_2},\Gamma'_{k_3},\ldots$  converges to $W$ in the cut-norm.
\end{thm}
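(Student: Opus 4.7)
My plan is to derive Theorem~\ref{thm:convergeFormal} from the main assertion highlighted in the abstract: cut-norm accumulation points of $(\Gamma_n)_n$ coincide with weak$^*$ limits of sequences of versions that minimize a strictly concave integral functional. Theorem~\ref{thm:convergeFormal} then follows once such a minimizer is shown to exist.

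First I would set up the space of candidate limits. Let $\mathcal{A}\subseteq L^\infty(I^2;[0,1])$ denote the set of all $W$ arising as a weak$^*$ limit $\Gamma'_{k_n}\overset{\mathrm{w}^*}\longrightarrow W$ for some increasing sequence $k_1<k_2<\cdots$ and some versions $\Gamma'_{k_n}$ of $\Gamma_{k_n}$. By Banach--Alaoglu together with the separability of $L^1(I^2)$, any sequence of versions has a weak$^*$ convergent subsequence, so $\mathcal{A}$ is nonempty; its elements are automatically symmetric and $[0,1]$-valued. A standard diagonal argument shows that $\mathcal{A}$ is itself weak$^*$ sequentially closed in the unit ball of $L^\infty(I^2)$, and hence weak$^*$ compact.

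Second, fix a continuous, strictly concave $f:[0,1]\to\mathbb{R}$ (for instance the binary entropy $f(t)=-t\log t-(1-t)\log(1-t)$) and set $\INT(W):=\iint f(W(x,y))\,\d x\,\d y$. Since $f$ is concave, $\INT$ is weak$^*$ upper semicontinuous on the unit ball of $L^\infty(I^2)$: its hypograph is weak$^*$ closed because it is convex and norm-closed in the predual topology. Combined with weak$^*$ compactness of $\mathcal{A}$, this implies that $\alpha:=\inf_{W\in\mathcal{A}}\INT(W)$ is attained at some $W^*\in\mathcal{A}$. Fix a witnessing sequence $\Gamma'_{k_n}\overset{\mathrm{w}^*}\longrightarrow W^*$.

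Third, and this is the main step, I would prove by contradiction that $\Gamma'_{k_n}\to W^*$ also in the cut-norm. If not, after passing to a further subsequence one finds $\varepsilon>0$ and sets $B_n\subseteq I$ such that $\bigl|\int_{B_n\times B_n}(\Gamma'_{k_n}-W^*)\bigr|\geq\varepsilon$ for every $n$. The technical heart of the argument must be to convert this persistent cut-norm defect into a measure-preserving re-ordering of each $\Gamma'_{k_n}$ yielding new versions whose weak$^*$ accumulation point $\widetilde{W}\in\mathcal{A}$ satisfies $\INT(\widetilde{W})<\alpha$. Intuitively, on $B_n\times B_n$ the graphon $W^*$ is overly ``averaged'' relative to $\Gamma'_{k_n}$; one splits $B_n$ into pieces according to where $\Gamma'_{k_n}$ exceeds or falls below $W^*$ and permutes points so that the resulting weak$^*$ limit disaggregates $W^*$ on $B_n\times B_n$ into two distinct levels whose separation is bounded below in terms of $\varepsilon$. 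Strict concavity of $f$, quantified via a Jensen-type strict gap, then forces $\INT(\widetilde{W})\leq \INT(W^*)-c(\varepsilon)<\alpha$, contradicting minimality.

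The main obstacle is precisely this third step: turning an abstract cut-norm defect into a concrete measurable rearrangement that verifiably produces a new element of $\mathcal{A}$ with strictly smaller $\INT$. The measurability of the permutations, control of the separation between the two disaggregated levels, and a quantitative Jensen gap that is uniform in $n$ are the delicate points; the compactness steps above are standard once $\mathcal{A}$ is shown to be closed and $\INT$ to be upper semicontinuous.
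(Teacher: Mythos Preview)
Your third step contains a genuine error that undermines the whole strategy. You correctly observe that $\INT_f$ is weak$^*$ \emph{upper} semicontinuous (this is what concavity of $f$ gives, via convexity and norm-closedness of the hypograph). But upper semicontinuous functions on compact sets attain their \emph{supremum}, not their infimum. There is no reason whatsoever that $\alpha=\inf_{W\in\mathcal A}\INT_f(W)$ should be attained, and in fact the paper records (Section~\ref{ssec:needsubseqinfimum}) an explicit example, due to Noel, of a sequence $(\Gamma_n)_n$ for which $\inf\{\INT_f(W):W\in\ACC(\Gamma_1,\Gamma_2,\ldots)\}=0$ yet no $W\in\ACC(\Gamma_1,\Gamma_2,\ldots)$ has $\INT_f(W)=0$. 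So your compactness--semicontinuity shortcut simply fails; one really must pass to a subsequence before a minimizer can be found.

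This is precisely why the paper's proof of Theorem~\ref{thm:compactANDminimizer}\ref{en:compact} is the most elaborate part of the argument. First a diagonalisation lemma (Lemma~\ref{lem:ACC=LIM}) produces a subsequence on which the infima over $\ACC$ and over $\LIM$ coincide. Then one builds a nested sequence of near-minimizing step-graphons $\widetilde W_n$ on successively refined interval partitions $\mathcal J_n$, carefully re-ordering the versions at each stage (via Corollary~\ref{cor:zmensiEntropii} and Lemma~\ref{lem:attainaveraged}) so that the integrals over the coarser $\mathcal J_{n-1}$-rectangles are preserved. Any weak$^*$ accumulation point $W_{\min}$ of the resulting diagonal sequence then agrees with each $\widetilde W_n$ on every $\mathcal J_n$-rectangle, so Jensen (Lemma~\ref{lem:concavetriv}) gives $\INT_f(W_{\min})\le\INT_f(\widetilde W_n)$ for all $n$, forcing $W_{\min}$ to be a minimizer. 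None of this can be replaced by a soft semicontinuity argument.

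Your step~4 is, modulo details, the paper's part~\ref{en:improve}: the ``shift $B_n$ to the left'' re-ordering of Lemma~\ref{lem:neostraNerovnost} is exactly the disaggregation mechanism you describe, and strict concavity via Jensen gives the strict drop in $\INT_f$. That sketch is on the right track; the real gap is earlier.
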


Prior to our work, there were three approaches to proving Theorem~\ref{thm:convergeFormal}. One, taken in~\cite{Lovasz2006} and in~\cite{Lovasz2007}, uses (variants of) the regularity lemma to group parts of $I$ according to the structure of $\Gamma_n$. This way, one approximates the graphons by step-functions, and the limit graphon $W$ is a limit of these step-functions.\footnote{A very general compactness result was given by Regts and Schrijver,~\cite{MR3425986}. This result in particular subsumes the compactness of the  graphon space. Even when specialized to the space of graphons, there are differences of debatable significance between the proofs.} A second approach, taken in~\cite{ElekSzegedy}, relies on ultraproduct techniques. This later approach is extremely technical, and was developed for the (more difficult) theory of limits of hypergraphs, where for some time the regularity approach was not available.\footnote{A regularity approach to hypergraph limits was later found by Zhao,~\cite{MR3382671}.} The third proof follows from the Aldous--Hoover theorem for exchangeable arrays (\cite{MR637937}). While the Aldous--Hoover theorem substantially precedes the theory of graph limits, the connection was realized substantially later by Diaconis and Janson, \cite{MR2463439} and independently by Austin~\cite{MR2426176}.

We present a fourth proof of Theorem~\ref{thm:convergeFormal}. Our proof provides for the first time a characterization of the cut-norm convergence in terms of the weak$^*$ convergence. Namely, fixing any continuous and strictly concave function $f:[0,1]\rightarrow\mathbb R$, we prove that there is a subsequence $\Gamma_{k_1},\Gamma_{k_2},\Gamma_{k_3},\ldots$ such that the map $W\mapsto \iint f(W(x,y))$ attains its minimum on the space of all weak$^*$ accumulation points of versions of graphons $\Gamma_{k_1},\Gamma_{k_2},\Gamma_{k_3},\ldots$, and that any such minimizer is an accumulation point of the sequence $\Gamma_1,\Gamma_2,\Gamma_3,\ldots$ in the cut-distance. This result is consistent with our toy example above. Indeed, for any strictly concave function $f$ we have $\iint f\left(W_{\mathrm{bipartite}}(x,y)\right)<\iint f\left(W_{\mathrm{const}}(x,y)\right)$ by Jensen's inequality. Jensen's inequality underlies the general proof of our result. 
This application of Jensen's inequality is in a sense analogous to the proof of the index-pumping lemma in proofs of the regularity lemma. We try to indicate this important link in Section~\ref{ssec:connectionwithRL}.

\subsection{Statement of the main results}
Let $f:[0,1]\rightarrow \mathbb R$ be an arbitrary continuous and strictly concave function.\footnote{The additional assumption of the continuity of the concave function $f:[0,1]\rightarrow\mathbb R$ which we work with in this paper only means that $f$ is continuous (from the appropriate sides) at 0 and at 1, so this is not a big extra restriction.} Given a graphon $\Gamma:I^2\rightarrow[0,1]$, we write $\INT_f(\Gamma):=\int_{x\in I}\int_{y\in I}f(\Gamma(x,y))$. When $f$ is the binary entropy, the integration $\INT_f(W)$ appears also in the work on large deviations in random graphs,~\cite{ChatVar:LargeDev} (which does not relate to the current work otherwise), and is called the \emph{entropy of the graphon $W$}.\footnote{As was pointed out to us by Svante Janson, Aldous (\cite[p. 145]{MR883646}) worked with this quantity already in the 1980's in the context of exchangeability.}

For a sequence $\Gamma_1,\Gamma_2,\Gamma_3,\ldots:I^2\rightarrow[0,1]$ of graphons, we denote by $\ACC(\Gamma_1,\Gamma_2,\Gamma_3,\ldots)$ the set of all functions $W:I^2\rightarrow [0,1]$ for which there exist versions $\Gamma'_1,\Gamma'_2,\Gamma'_3,\ldots$ of $\Gamma_1,\Gamma_2,\Gamma_3,\ldots$ such that $W$ is a weak$^*$ accumulation point of the sequence $\Gamma'_1,\Gamma'_2,\Gamma'_3,\ldots$. We also denote by $\LIM(\Gamma_1,\Gamma_2,\Gamma_3,\ldots)$ the set of all functions $W:I^2\rightarrow [0,1]$ for which there exist versions $\Gamma'_1,\Gamma'_2,\Gamma'_3,\ldots$ of $\Gamma_1,\Gamma_2,\Gamma_3,\ldots$ such that $W$ is a weak$^*$ limit of the sequence $\Gamma'_1,\Gamma'_2,\Gamma'_3,\ldots$. We have $\LIM(\Gamma_1,\Gamma_2,\Gamma_3,\ldots)\subset\ACC(\Gamma_1,\Gamma_2,\Gamma_3,\ldots)$. Note that $\LIM(\Gamma_1,\Gamma_2,\Gamma_3,\ldots)$ can be empty but $\ACC(\Gamma_1,\Gamma_2,\Gamma_3,\ldots)$ cannot be empty by the sequential Banach--Alaoglu Theorem (see the Appendix for more details). Also, note that such weak$^*$ accumulation points (and thus also limits) are necessarily symmetric, Lebesgue measurable, $[0,1]$-valued, and thus graphons.

Our main result states that, given a sequence of graphons $\Gamma_1,\Gamma_2,\Gamma_3,\ldots$, there is a subsequence $\Gamma_{k_1},\Gamma_{k_2},\Gamma_{k_3},\ldots$ such that the minimum of $\INT_f(\cdot)$ over the set  $\ACC(\Gamma_{k_1},\Gamma_{k_2},\Gamma_{k_3},\ldots)$ is attained, and the graphon attaining this minimum is an accumulation point of the sequence $\Gamma_1,\Gamma_2,\Gamma_3,\ldots$ in the cut-distance.

\begin{thm}\label{thm:compactANDminimizer}
Suppose that $f:[0,1]\rightarrow \mathbb R$ is an arbitrary continuous and strictly concave function.
Suppose that $\Gamma_1,\Gamma_2,\Gamma_3,\ldots:I^2\rightarrow[0,1]$ is a sequence of graphons. 
\begin{enumerate}[label=(\alph*)]
\item\label{en:improve} Suppose that $W\in\ACC(\Gamma_1,\Gamma_2,\Gamma_3,\ldots)$ is not an accumulation point of 
the sequence $\Gamma_1,\Gamma_2,\Gamma_3,\ldots$ in the cut-norm. Then there exists $\widetilde W\in\ACC(\Gamma_1,\Gamma_2,\Gamma_3,\ldots)$ such that $\INT_f(\widetilde W)<\INT_f(W)$.
\item\label{en:compact} There exist a subsequence $\Gamma_{k_1},\Gamma_{k_2},\Gamma_{k_3},\ldots$ and a graphon $W_{\min}\in\ACC(\Gamma_{k_1},\Gamma_{k_2},\Gamma_{k_3},\ldots)$ such that
\begin{equation}\nonumber
\INT_f(W_{\min})=\inf\left\{\INT_f(W)\colon W\in\ACC(\Gamma_{k_1},\Gamma_{k_2},\Gamma_{k_3},\ldots)\right\}\;.
\end{equation}
\end{enumerate}
\end{thm}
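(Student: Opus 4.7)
The plan is to prove (a) via a rearrangement argument exploiting the cut-norm defect together with Jensen's inequality, and then to derive (b) from (a) using a maximality argument combined with weak$^*$ compactness.

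\textbf{For (a).} Let $(\Gamma'_n)$ be versions of $(\Gamma_n)$ with $\Gamma'_{k_n}$ converging weak$^*$ to $W$. Since $W$ is not a cut-norm accumulation point, after extracting a further subsequence one obtains $\varepsilon > 0$ and measurable sets $B_n \subseteq I$ with $|\int_{B_n \times B_n}(\Gamma'_{k_n} - W)| \geq \varepsilon$. Choose measure-preserving bijections $\phi_n : I \to I$ with $\phi_n([0, \lambda(B_n)]) = B_n$ and set $\Gamma''_{k_n}(x,y) := \Gamma'_{k_n}(\phi_n(x), \phi_n(y))$, which is another sequence of versions. Refining further using Banach--Alaoglu, I arrange that $\lambda(B_n) \to \alpha$, that $\mathbf{1}_{B_n}$ converges weak$^*$ in $L^\infty(I)$ to a function $\chi$, that $\Gamma''_{k_n}$ converges weak$^*$ to some $\widetilde W$, and that the graph measures of $\phi_n$ (viewed as doubly stochastic measures on $I^2$) converge weak$^*$ to a joining $\mu$ on $I^2$ with both marginals equal to Lebesgue.

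The crux is the identity
\[
W(u, v) = \int_I \int_I \widetilde W(x, y) \, d\mu_u(x) \, d\mu_v(y) \qquad \text{for a.e.\ } (u, v),
\]
where $\mu_u$ denotes the conditional distribution of $\mu$ at $u$; this is read off from the relation $\Gamma'_{k_n}(u,v) = \Gamma''_{k_n}(\phi_n^{-1}(u), \phi_n^{-1}(v))$ by taking joint weak$^*$ limits. Applying Jensen pointwise to the concave $f$ and integrating, using that the marginals of $\mu$ are Lebesgue, yields $\INT_f(W) \geq \INT_f(\widetilde W)$. The cut-norm defect translates to $\int_{[0,\alpha]^2} \widetilde W \neq \iint W(x,y)\chi(x)\chi(y)\,dx\,dy$, which forces $\widetilde W$ to fail to be $\mu$-conditionally constant on a set of positive measure; strict concavity of $f$ then upgrades Jensen to strict inequality.

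\textbf{For (b).} Let $M^* := \sup_{(k_n)} \inf_{W \in \ACC((\Gamma_{k_n}))} \INT_f(W)$, the sup running over all infinite subsequences. Since passing to a subsequence can only enlarge the infimum, a standard diagonal argument produces a subsequence $(k_n^*)$ with $\inf_{\ACC((\Gamma_{k_n^*}))} \INT_f = M^*$. I then invoke two ingredients: $\ACC((\Gamma_{k_n^*}))$ is weak$^*$ closed (by a diagonal construction across the version sequences realizing accumulation points), and $\INT_f$ is weak$^*$ upper semi-continuous on graphons (obtained by step-function approximation combined with Jensen for concave $f$). Hence for every $\varepsilon > 0$ the set $A_\varepsilon := \{W \in \ACC((\Gamma_{k_n^*})) : \INT_f(W) \leq M^* + \varepsilon\}$ is nonempty (by definition of $M^*$) and weak$^*$ closed inside the Banach--Alaoglu-compact ball $\{W : 0 \leq W \leq 1\}$. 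The nested family $\{A_\varepsilon\}_{\varepsilon > 0}$ has nonempty intersection, which equals $\{W \in \ACC((\Gamma_{k_n^*})) : \INT_f(W) = M^*\}$, and any element of that intersection is the required $W_{\min}$.

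\textbf{Main obstacles.} In (a), the delicate technical step is the rigorous identification of $\mu$ as the weak$^*$ limit of the bijections $\phi_n$ in the space of doubly stochastic joinings, and the derivation of the Markov-action formula expressing $W$ as a two-sided averaging of $\widetilde W$; once that is in hand, Jensen delivers both the non-strict and the strict inequality. In (b), the main subtlety is the closure of $\ACC$ under weak$^*$ convergence, which requires marshalling a diagonal argument across the infinitely many distinct version sequences witnessing different accumulation points.
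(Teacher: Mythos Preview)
Your argument for (a) is essentially the paper's, recast in the language of couplings: where the paper writes down the explicit two-point disintegration $\mu_u=\psi(u)\delta_{\theta(u)}+(1-\psi(u))\delta_{\xi(u)}$ and verifies the convex-combination identity by hand, you encode the limiting rearrangement as a doubly stochastic measure and invoke the general Markov-action formula. The content is the same, and you correctly flag the one genuinely delicate step (passing to the limit in the composition $\Gamma''_{k_n}(\phi_n^{-1}(u),\phi_n^{-1}(v))$, which does \emph{not} follow from weak$^*$ convergence of the factors separately). The paper's concrete computation with $\theta,\xi,\psi$ is precisely a proof of that formula for the specific bijections at hand; your abstract formulation would need the same work.

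Your argument for (b), however, has a genuine gap. You correctly observe that $\INT_f$ is weak$^*$ \emph{upper} semicontinuous for concave $f$, but then conclude that the \emph{sublevel} sets $A_\varepsilon=\{W:\INT_f(W)\le M^*+\varepsilon\}$ are weak$^*$ closed. That is the wrong direction: upper semicontinuity closes superlevel sets, not sublevel sets. To run the nested-compacta argument you would need $\INT_f$ to be weak$^*$ \emph{lower} semicontinuous, and it is not --- the paper gives an explicit counterexample (a sequence of $\{0,1\}$-valued chessboard graphons with small $\INT_f$ converging weak$^*$ to the constant $\tfrac12$, whose $\INT_f$ is strictly larger). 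This failure of lower semicontinuity is exactly what makes (b) nontrivial: a minimizing sequence in $\ACC$ can weak$^*$-converge to something with strictly larger $\INT_f$. The paper circumvents this by building, for a carefully chosen subsequence, a tower of \emph{refining} step-graphon approximants $\widetilde W_n$ on nested interval partitions $\mathcal J_n$, arranged so that the cell-averages are consistent across levels; any weak$^*$ accumulation point $W_{\min}$ then has the same cell-averages as every $\widetilde W_n$, and Jensen (applied cellwise) gives $\INT_f(W_{\min})\le\INT_f(\widetilde W_n)$ for every $n$. Your compactness-plus-semicontinuity shortcut does not see this structure and cannot be repaired without it.
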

Clearly, Theorem~\ref{thm:compactANDminimizer} implies Theorem~\ref{thm:convergeFormal}.

The proof of Theorem~\ref{thm:compactANDminimizer} is given in Sections~\ref{sec:proofImprove} and~\ref{sec:proofCompact}.

To complete the ``characterization of the cut-norm convergence in terms of the weak$^*$ convergence'' advertised above, we prove that weak$^*$ limit points that do not minimize $\INT_f(\cdot)$ cannot be limit points in the cut-norm.
\begin{prop}\label{prop:cutnormMIN}
Suppose that $f:[0,1]\rightarrow \mathbb R$ is an arbitrary continuous and strictly concave function.
Suppose that $\Gamma_1,\Gamma_2,\Gamma_3,\ldots:I^2\rightarrow[0,1]$ is a sequence of graphons. If $W\in\LIM(\Gamma_1,\Gamma_2,\Gamma_3,\ldots)$ is a cut-norm limit of versions of $\Gamma_1,\Gamma_2,\Gamma_3,\ldots$ then $W$ is a minimizer of $\INT_f(\cdot)$
over the space $\LIM(\Gamma_1,\Gamma_2,\Gamma_3,\ldots)$.
\end{prop}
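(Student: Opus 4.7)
\textbf{The plan} is to show $\INT_f(W)\leq\INT_f(W')$ for every $W'\in\LIM(\Gamma_1,\Gamma_2,\ldots)$. The argument rests on two facts: (a) $\INT_f$ is upper semi-continuous with respect to weak$^*$ convergence and invariant under versions, and (b) the cut-norm convergence to $W$ survives any reordering --- that is, for any other versions of $\Gamma_n$ there are versions of $W$ meeting them in cut-norm.

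To carry this out, I would fix versions $\Gamma'_n$ of $\Gamma_n$ with $\|\Gamma'_n-W\|_\square\to 0$ (the cut-norm hypothesis) and, for a given competitor $W'\in\LIM(\Gamma_1,\Gamma_2,\ldots)$, versions $\Gamma''_n$ of $\Gamma_n$ with $\Gamma''_n\overset{\mathrm{w}^*}{\to}W'$. Since $\Gamma''_n$ and $\Gamma'_n$ are versions of each other, the triangle inequality for the cut-distance gives
\[
\delta_\square(\Gamma''_n,W)\leq\delta_\square(\Gamma''_n,\Gamma'_n)+\|\Gamma'_n-W\|_\square=\|\Gamma'_n-W\|_\square\to 0,
\]
so I can select versions $W_n$ of $W$ realising this infimum up to an error of $1/n$ and satisfying $\|\Gamma''_n-W_n\|_\square\to 0$. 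The cut-norm dominates the weak$^*$ topology on $[0,1]$-valued symmetric functions, hence $\Gamma''_n-W_n\to 0$ weakly$^*$; comparing with $\Gamma''_n\overset{\mathrm{w}^*}{\to}W'$ yields $W_n\overset{\mathrm{w}^*}{\to}W'$. Finally, weak$^*$ upper semi-continuity of $\INT_f$ together with $\INT_f(W_n)=\INT_f(W)$ (invariance under measure-preserving transformations) gives $\INT_f(W')\geq\limsup_n\INT_f(W_n)=\INT_f(W)$, as required.

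\textbf{The main technical step} is the weak$^*$ upper semi-continuity of $\INT_f$, where the strict concavity of $f$ enters via Jensen's inequality. The standard route approximates a graphon $\Gamma$ by its conditional-expectation step-function $\Gamma^{\mathcal P}$ on a finite measurable partition $\mathcal P$ of $I$: Jensen gives $\INT_f(\Gamma)\leq\INT_f(\Gamma^{\mathcal P})$; weak$^*$ convergence of $\Gamma_n\to\Gamma$ yields $\INT_f(\Gamma_n^{\mathcal P})\to\INT_f(\Gamma^{\mathcal P})$ because the values on each rectangle $P_i\times P_j$ are just averages over $P_i\times P_j$; and refining $\mathcal P$ and invoking the martingale convergence theorem on $\Gamma^{\mathcal P}\to\Gamma$ together with bounded convergence closes the estimate. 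Modulo this lemma --- which is in any case the Jensen engine underlying the main theorem --- the rest of the argument is a short manipulation of the definitions of $\LIM$ and $\delta_\square$.
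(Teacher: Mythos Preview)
Your argument is correct and is in fact more direct than either of the paper's two proofs. Both of the paper's proofs first invoke Theorem~\ref{thm:compactANDminimizer} to pass to a subsequence on which an actual minimizer $W'$ of $\INT_f$ over $\LIM$ exists and is a cut-norm limit, and then show $\INT_f(W)=\INT_f(W')$ --- in the first proof via the Borgs--Chayes--Lov\'asz uniqueness theorem, in the second via the lower semi-continuity of the $L^1$-distance $\delta_1$ along cut-norm convergent sequences (Lemma~2.11 of~\cite{Lovasz}). Your route bypasses all of this: you never need a minimizer to exist, only the direct inequality $\INT_f(W)\le\INT_f(W')$ for an arbitrary competitor $W'$.

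The key new ingredient you supply is the weak$^*$ upper semi-continuity of $\INT_f$, which the paper does not isolate as a lemma but which is indeed an immediate consequence of the stepping machinery (Lemmas~\ref{lem:aproximacePrumerama} and~\ref{lem:concavetriv}): for any finite partition $\mathcal P$, Jensen gives $\INT_f(\Gamma_n)\le\INT_f(\Gamma_n^{\Join\mathcal P})$, weak$^*$ convergence pushes through the finitely many cell averages so $\INT_f(\Gamma_n^{\Join\mathcal P})\to\INT_f(\Gamma^{\Join\mathcal P})$, and then one refines $\mathcal P$. The remaining step --- turning $\delta_\square(\Gamma''_n,W)\to 0$ into versions $W_n$ of $W$ with $\|\Gamma''_n-W_n\|_\square\to 0$ --- works because versions in this paper are defined via measure-preserving almost-bijections, so the transformation realising the infimum (up to $1/n$) can be inverted and transferred to $W$. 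What your approach buys is independence from Theorem~\ref{thm:compactANDminimizer}: the paper's proofs make Proposition~\ref{prop:cutnormMIN} logically downstream of the compactness result, whereas yours stands on its own.
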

In Section~\ref{prop:cutnormMIN} we show that Proposition~\ref{prop:cutnormMIN} is an easy consequence of a result of Borgs, Chayes, and Lov\'asz~\cite{MR2594615} on uniqueness of graph limits. In addition, we give a self-contained proof.

\section{Notation and tools}\label{sec:Notation}
For every function $W:I^2\rightarrow\mathbb R$, we define the \emph{cut-norm} of $W$ by
\begin{equation}\label{def:cut-norm}
\|W\|_\square=\sup_A\left|\int_A\int_AW(x,y)\right|\;,
\end{equation}
where $A$ ranges over all measurable subsets of $I$. Another slightly different formula is also often used in the literature where one replaces the right-hand side of (\ref{def:cut-norm}) by $\sup_{A,B}\left|\int_A\int_BW(x,y)\right|$
where two sets $A$ and $B$ range over all measurable subsets of $I$. However, it is easy to see that for every symmetric function $W$, we have
$$\sup_{A,B}\left|\int_A\int_BW(x,y)\right|\ge \sup_A\left|\int_A\int_AW(x,y)\right|\ge\frac12\sup_{A,B}\left|\int_A\int_BW(x,y)\right|,$$
and so the notion of convergence of sequences of graphons (which are symmetric) in the cut-norm is irrelevant to the choice between these two formulas.

We say that a graphon $\Gamma\colon I^2\rightarrow [0,1]$ is a \emph{step-graphon} with steps $I_1,I_2,\ldots,I_k\subset I$ if the sets $I_1,I_2,\ldots,I_k$ are pairwise disjoint, $I_1\cup I_2\cup\ldots\cup I_k=I$ and $W_{|I_i\times I_j}$ is constant (up to a null set) for every $i,j=1,2,\ldots,k$.

We say that a measurable function $\gamma:I\rightarrow I$ is an \emph{almost-bijection} if there exist conull sets $J_1,J_2\subset I$ such that $\gamma_{|J_1}$ is a bijection from $J_1$ onto $J_2$. When we talk about the inverse of such a function $\gamma$ then we mean $(\gamma_{|J_1})^{-1}$ but we denote it only by $\gamma^{-1}$. Note that this inverse $\gamma^{-1}$ is not unique but that does not cause any problems as any two inverses of $\gamma$ differ only on a null set.

If $\Gamma,\Gamma':I^2\rightarrow[0,1]$ are two graphons then we say that $\Gamma'$ is a \emph{version} of $\Gamma$ if there exists a measure preserving almost-bijection $\gamma:I\rightarrow I$ such that $\Gamma'(x,y)=\Gamma(\gamma^{-1}(x),\gamma^{-1}(y))$ for almost every $(x,y)\in I^2$.

Related to versions, we recall that the \emph{cut-distance} and \emph{$L^1$-distance} between two graphons $W_1,W_2$ are defined as $\delta_\square(W_1,W_2)=\inf \|U_1-W_2\|_\square$ and $\delta_1(W_1,W_2)=\inf \|U_1-W_2\|_1$ where $U_1$ ranges over all versions of $W_1$.

By an \emph{ordered partition} of $I$, we mean a partition of $I$ with a fixed order of the sets from the partition.
For an ordered partition $\mathcal J$ of $I$ into finitely many sets $C_1,C_2,\ldots,C_k$, we define mappings $\alpha_{\mathcal J,1},\alpha_{\mathcal J,2},\ldots,\alpha_{\mathcal J,k}:I\rightarrow I$, and a mapping $\gamma_{\mathcal J}:I\rightarrow I$ by
\begin{equation}
\begin{split}\label{eq:pocitac}
\alpha_{\mathcal J,1}(x)&=
\int_0^x{\bf 1}_{C_1}(y)\:\d(y)\;,\\
\alpha_{\mathcal J,2}(x)&=\alpha_{\mathcal J,1}(1)+\int_0^x{\bf 1}_{C_2}(y)\:\d(y)\;,\\
\vdots\\
\alpha_{\mathcal J,k}(x)&=\alpha_{\mathcal J,1}(1)+\alpha_{\mathcal J,2}(1)+\ldots+\alpha_{\mathcal J,k-1}(1)+\int_0^x{\bf 1}_{C_k}(y)\:\d(y)\;,\\
\gamma_{\mathcal J}(x)&=
\alpha_{\mathcal J,i}(x)\quad\text{if }x\in C_i,\quad i=1,2,\ldots,k\;.
\end{split}
\end{equation}
Informally, $\gamma_{\mathcal J}$ is defined in such a way that it maps the set $C_1$ to the left side of the interval $I$, the set $C_2$ next to it, and so on. Finally, the set $C_k$ is mapped to the right side of the interval $I$. Clearly, $\gamma_{\mathcal J}$ is a measure preserving almost-bijection.

For a graphon $W:I^2\rightarrow[0,1]$ and an ordered partition $\mathcal J$ of $I$ into finitely many sets, we denote by $\prescript{}{\mathcal J}W$ the version of $W$ defined by $\prescript{}{\mathcal J}W(x,y)=W(\gamma_{\mathcal J}^{-1}(x),\gamma_{\mathcal J}^{-1}(y))$ for every $(x,y)\in I^2$.

\subsection{Lebesgue points}
The Lebesgue density theorem asserts that given an integrable function $f:\mathbb{R}^n\rightarrow \mathbb{R}$, almost every point $x\in\mathbb{R}^n$ is a Lebesgue point of $f$, meaning that the value of $f(x)$ equals to the limit of the averages of $f$ on neighborhoods of $x$ of diminishing sizes. There is some freedom in choosing the particular shapes of these neighborhoods. Below, we give a definition of Lebesgue points tailored to our purposes. Since we shall work with graphons, we state this definition for the domain $I^2$.
\begin{defn}
Suppose that $W:I^2\rightarrow\mathbb R$ is an integrable function. We say that $(x,y)\in I^2$ is a \emph{Lebesgue point} of $W$ if for every $\eta>0$ there exists $\delta_0>0$ such that whenever $[p_1,p_2]\subset I$ and $[q_1,q_2]\subset I$ are intervals such that the length of the intervals is smaller or equal to $\delta_0$, such that the ratio of the lengths of these intervals is at least $\tfrac 12$ and at most 2, and such that $[p_1,p_2]$ contains $x$ and $[q_1,q_2]$ contains $y$ then
\begin{equation}\label{eq:LebPAsWeUseIt}
\left|W(x,y)-\frac 1{(p_2-p_1)(q_2-q_1)}\int_{p_1}^{p_2}\int_{q_1}^{q_2}W(w,z)\:\d(w)\:\d(z)\right|<\eta\;.
\end{equation}
\end{defn}
We can now state the Lebesgue density theorem.
\begin{thm}[Lebesgue density theorem]
	Suppose that $W:I^2\rightarrow\mathbb R$ is an integrable function. Then almost every point of $I^2$ is a Lebesgue point of $W$.
\end{thm}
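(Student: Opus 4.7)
The plan is to reduce the statement to the classical Lebesgue differentiation theorem on $\mathbb R^2$ in its strong form: for every integrable $W$ (extended by zero outside $I^2$), almost every point $(x,y)\in I^2$ satisfies
\[
\lim_{r\to 0}\frac{1}{|B_r(x,y)|}\int_{B_r(x,y)}|W(w,z)-W(x,y)|\,\d(w)\,\d(z)=0,
\]
where $B_r(x,y)$ is the Euclidean ball of radius $r$. I would take this as a known input, proved in the usual way from the Hardy--Littlewood maximal inequality via the Vitali covering lemma.

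Next I would fix such a strong Lebesgue point $(x,y)$ and an arbitrary $\eta>0$, and try to exhibit a $\delta_0>0$ so that every rectangle $R=[p_1,p_2]\times[q_1,q_2]$ satisfying the hypotheses of the definition (containing $(x,y)$, side lengths at most $\delta_0$, aspect ratio in $[\tfrac12,2]$) satisfies \eqref{eq:LebPAsWeUseIt}.

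The key observation is that such rectangles form a \emph{regular family}. Indeed, writing $a\le b$ for the side lengths of $R$ with $b\le 2a$ and noting that $R$ contains $(x,y)$, we see that $R$ is contained in the Euclidean ball $B$ centered at $(x,y)$ of radius $\sqrt{a^2+b^2}\le\sqrt 5\,b$, while $|R|=ab\ge b^2/2$. Hence $|B|/|R|$ is bounded above by an absolute constant $C$, independent of $(x,y)$ and of $R$. The restriction on the aspect ratio is essential here; without it the analogous statement would be false in dimension two, since the strong maximal operator on arbitrary rectangles is not of weak type $(1,1)$.

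The conclusion is then immediate: the rectangle average of $|W(\cdot,\cdot)-W(x,y)|$ is bounded by $|B|/|R|\le C$ times the ball average, so choosing $\delta_0$ so small that the ball average is below $\eta/C$ for every $r\le\sqrt 5\,\delta_0$ forces \eqref{eq:LebPAsWeUseIt}. The only step that requires any thought is the regularity observation above; everything else is either the classical theorem or elementary geometry.
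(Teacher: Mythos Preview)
Your argument is correct and follows the standard route: invoke the classical Lebesgue differentiation theorem for balls (in its strong form), then observe that rectangles with aspect ratio in $[\tfrac12,2]$ containing the center form a regular family, so their averages are controlled by the ball averages up to a universal constant. The geometric bound and the final $\delta_0$ choice are fine.

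There is nothing to compare against here: the paper states this theorem without proof, treating it as a known result tailored to the rectangle shapes it needs. Your write-up simply supplies the (standard) justification the paper omits. One cosmetic remark: since $a\le b$ you actually get the sharper radius bound $\sqrt{a^2+b^2}\le\sqrt{2}\,b$, but $\sqrt{5}\,b$ works just as well for establishing the uniform constant $C$.
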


\subsection{Stepping}

The next definition introduces graphons derived by an averaging of a given graphon $W$ on a given partition of $I$. Here, we denote by $\lambda^{\oplus 2}$ the two-dimensional Lebesgue measure on $I^2$.

\begin{defn}
	Suppose that $W:I^2\rightarrow [0,1]$ is a graphon. For a partition $\mathcal I$ of the unit interval into finitely many sets of positive measure, $I=I_1\sqcup I_2\sqcup \ldots \sqcup I_k$, we define a \emph{stepping} $W^{\Join\mathcal I}$ which is defined on each rectangle $I_i\times I_j$ to be the constant $\frac 1{\lambda^{\oplus 2}(I_i\times I_j)}\int_{I_i}\int_{I_j}W(x,y)$.
\end{defn}

The next lemma shows that we can replace any graphon $W$ by its stepping (on some partition of $I$) without changing the value of $\INT_f(W)$ too much.

\begin{lem}\label{lem:aproximacePrumerama}
	Let $f:[0,1]\rightarrow \mathbb R$ be an arbitrary continuous and strictly concave function, and let $\mathcal J$ be an arbitrary partition of $I$ into finitely many intervals of positive measure. Suppose that $W\colon I^2\rightarrow[0,1]$ is a graphon, and let $\varepsilon>0$. Then there exists a partition $\mathcal I$ of $I$ into finitely many intervals of positive measure such that $\mathcal I$ is a refinement of $\mathcal J$ and such that $|\INT_f(W)-\INT_f(W^{\Join\mathcal I})|<\varepsilon$.
\end{lem}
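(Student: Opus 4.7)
The plan is to fix a sequence $(\mathcal I_n)_{n\in\mathbb N}$ of refinements of $\mathcal J$ with controlled geometry, show that the associated steppings $W^{\Join\mathcal I_n}$ converge to $W$ pointwise almost everywhere, and then deduce $\INT_f(W^{\Join\mathcal I_n})\to\INT_f(W)$ by dominated convergence. Choosing $n$ large will then give the desired $\mathcal I$.

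First I would construct $\mathcal I_n$ so that the Lebesgue point definition given in the paper applies cleanly. Write $\mathcal J=\{J_1,\dots,J_r\}$ with lengths $\ell_s=\lambda(J_s)>0$ and put $\ell_{\min}=\min_s\ell_s$. For $n\ge 1/\ell_{\min}$, subdivide each $J_s$ into $\lceil n\ell_s\rceil$ equal subintervals. A short computation shows that every subinterval of the resulting partition $\mathcal I_n$ has length in $[1/(2n),1/n]$, so every two intervals of $\mathcal I_n$ have a length ratio in $[\tfrac12,2]$, and every interval of $\mathcal I_n$ has length at most $1/n$.

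Next I would establish pointwise convergence on Lebesgue points. Fix a Lebesgue point $(x,y)$ of $W$ (these are of full measure by the Lebesgue density theorem), an $\eta>0$, and the corresponding $\delta_0>0$ from the definition. For $n\ge\max(1/\ell_{\min},1/\delta_0)$, the unique rectangle $I_i\times I_j$ from $\mathcal I_n\times\mathcal I_n$ containing $(x,y)$ has side lengths at most $1/n\le\delta_0$ and aspect ratio in $[\tfrac12,2]$, so the Lebesgue point inequality~\eqref{eq:LebPAsWeUseIt} applied to this very rectangle reads
\[
\bigl|W^{\Join\mathcal I_n}(x,y)-W(x,y)\bigr|
=\left|\frac{1}{\lambda^{\oplus 2}(I_i\times I_j)}\int_{I_i}\int_{I_j}W-W(x,y)\right|<\eta.
\]
Hence $W^{\Join\mathcal I_n}(x,y)\to W(x,y)$ for almost every $(x,y)\in I^2$.

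Finally, since $f$ is continuous on the compact interval $[0,1]$ it is bounded by some constant $M$, and since each $W^{\Join\mathcal I_n}$ again takes values in $[0,1]$ as an average of $W$, the family $\{f\circ W^{\Join\mathcal I_n}\}_n$ is uniformly bounded by $M$. By continuity of $f$ together with the previous paragraph, $f(W^{\Join\mathcal I_n})\to f(W)$ almost everywhere, and the dominated convergence theorem yields $\INT_f(W^{\Join\mathcal I_n})\to\INT_f(W)$. Taking $n$ large enough and setting $\mathcal I=\mathcal I_n$ finishes the proof. I do not expect a real obstacle here: the argument is a standard averaging/Lebesgue differentiation approximation, and the only point where care is needed is the construction of $\mathcal I_n$ so that its rectangles satisfy the aspect ratio condition baked into the paper's notion of Lebesgue point. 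Notably, the strict concavity of $f$ is not used at this stage — only continuity — which is consistent with the fact that by Jensen's inequality the stepping always satisfies $\INT_f(W^{\Join\mathcal I})\ge\INT_f(W)$, and we only need to quantify how close the two can be forced.
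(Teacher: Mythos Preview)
Your proof is correct and follows essentially the same route as the paper's: both construct a refinement of $\mathcal J$ into intervals of comparable length (so the aspect-ratio condition in the paper's definition of Lebesgue point is satisfied) and then use the Lebesgue density theorem to control $|W^{\Join\mathcal I}-W|$ almost everywhere. The only cosmetic difference is that the paper fixes a single scale $n_0$ and splits the error explicitly into a good part (outside a small-measure set $D_{n_0}$) and a bad part bounded by $2C\lambda^{\oplus 2}(D_{n_0})$, whereas you pass to a sequence and invoke dominated convergence --- the same argument in different clothing.
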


\begin{proof}
	As $f$ is continuous, there is $\eta>0$ such that $|f(x)-f(y)|<\tfrac 12\varepsilon$ whenever $x,y\in [0,1]$ are such that $|x-y|<\eta$. Also, as $W$ is an integrable function, almost every point $(x,y)\in I^2$ is a Lebesgue point of $W$. This implies that for a.e. $(x,y)\in I^2$ there is a natural number $n$ such that whenever $[p_1,p_2]\subset I$ and $[q_1,q_2]\subset I$ are intervals of lengths smaller or equal to $\tfrac 2n$ such that the ratio of the lengths is at least $\tfrac 12$ and at most 2, and such that $[p_1,p_2]$ contains $x$ and $[q_1,q_2]$ contains $y$ then inequality~(\ref{eq:LebPAsWeUseIt}) holds. For every such $(x,y)$, we denote by $n(x,y)$ the smallest $n$ with this property. For every natural number $n$, we also put
	\begin{equation}
	\nonumber
	D_n=\left\{(x,y)\in I^2\colon n(x,y)>n\right\}\;.
	\end{equation}
	Then it is easy to check that the sets $D_1\supseteq D_2\supseteq D_3\supseteq\ldots$ are measurable and $\lambda^{\oplus 2}\left(\bigcap_{n=1}^\infty D_n\right)=0$. So, after denoting $C:=\max_{x\in [0,1]}|f(x)|$, we can find a natural number $n_0$ large enough such that
	\begin{equation}\label{eq:measureOfB}
	\lambda^{\oplus 2}\left(D_{n_0}\right)<\frac 1{4C}\varepsilon\;,
	\end{equation}
	and such that $\tfrac 1{n_0}$ is smaller than the length of all intervals from the partition $\mathcal J$. Now let $\mathcal I$ be an arbitrary refinement of the partition $\mathcal J$ into finitely many intervals $I_1,I_2,\ldots,I_k$, such that the length of each of these intervals is at least $\tfrac 1{n_0}$ and at most $\tfrac 2{n_0}$. For each $i,j=1,2,\ldots,k$, we denote $C_{i,j}=\frac 1{\lambda^{\oplus 2}(I_i\times I_j)}\int_{I_i}\int_{I_j}W(x,y)$. Inequality~(\ref{eq:LebPAsWeUseIt}) then tells us that
	\begin{equation}\nonumber
	\left|W(x,y)-C_{i,j}\right|<\eta\qquad\text{for every }(x,y)\in (I_i\times I_j)\setminus D_{n_0},\quad i,j=1,2,\ldots,k\;,
	\end{equation}
	and so
	\begin{equation}\label{eq:aproximaceHodnot_f(W)}
	\left|f(W(x,y))-f(C_{i,j})\right|<\tfrac 12\varepsilon\qquad\text{for every }(x,y)\in (I_i\times I_j)\setminus D_{n_0},\quad i,j=1,2,\ldots,k\;.
	\end{equation}
	So we have
	\begin{equation}\nonumber
	\begin{aligned}
	&\left|\INT_f(W)-\INT_f(W^{\Join\mathcal I})\right|\\
	\le&\iint\limits_{D_{n_0}}\left|f(W(x,y))-f(W^{\Join\mathcal I}(x,y))\right|+\sum_{i,j=1}^k\;\iint\limits_{(I_i\times I_j)\setminus D_{n_0}}\left|f(W(x,y))-f(W^{\Join\mathcal I}(x,y))\right|\\
	\stackrel{(\ref{eq:aproximaceHodnot_f(W)})}{\le}&2C\cdot\lambda^{\oplus 2}(D_{n_0})+\frac 12\varepsilon \sum_{i,j=1}^k\lambda^{\oplus 2}\left((I_j\times I_j)\setminus D_{n_0}\right)\\
	\stackrel{(\ref{eq:measureOfB})}{<}&\frac 12\varepsilon+\frac 12\varepsilon=\varepsilon\;,
	\end{aligned}
	\end{equation}
	as we wanted.
\end{proof}
The next lemma says that if a graphon is a weak$^*$ limit point then so is any graphon derived by an averaging of the original one on a given partition of $I$ into intervals.
\begin{lem}\label{lem:attainaveraged}
	Suppose that $\Gamma_1,\Gamma_2,\Gamma_3,\ldots:I^2\rightarrow[0,1]$ is a sequence of graphons. Suppose that $W\in\LIM(\Gamma_1,\Gamma_2,\Gamma_3,\ldots)$ and that we have a partition $\mathcal I$ of $I$ into finitely many intervals of positive measure.
	Then $W^{\Join\mathcal I}\in\LIM(\Gamma_1,\Gamma_2,\Gamma_3,\ldots)$.
	
	Moreover, whenever $\Gamma'_1,\Gamma'_2,\Gamma'_3,\ldots$ are versions of $\Gamma_1,\Gamma_2,\Gamma_3,\ldots$ which converge to $W$ in the weak$^*$ topology then the versions $\Gamma''_1,\Gamma''_2,\Gamma''_3,\ldots$ of $\Gamma_1,\Gamma_2,\Gamma_3,\ldots$ weak$^*$ converging to $W^{\Join\mathcal I}$ can be chosen in such a way that for every natural number $j$ and for every intervals $K,L\in\mathcal I$ it holds
	$$\int_K\int_L\Gamma_j'(x,y)=\int_K\int_L\Gamma_j''(x,y)\;.$$
\end{lem}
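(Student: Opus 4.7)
The plan is to reduce the lemma to a within-block mixing statement, and then finish with a diagonal argument. First, observe that the stepping operation $\Gamma\mapsto\Gamma^{\Join\mathcal I}$ is weak$^*$-continuous: on each rectangle $K\times L$ with $K,L\in\mathcal I$, the stepping takes the constant value $\frac{1}{\lambda(K)\lambda(L)}\int_K\int_L\Gamma$, and weak$^*$ convergence $\Gamma_j'\to W$ is exactly the convergence of such block integrals. Hence $(\Gamma_j')^{\Join\mathcal I}\to W^{\Join\mathcal I}$ weak$^*$.

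The key auxiliary claim is a \emph{mixing lemma}: for any graphon $\Gamma$ there exist measure-preserving almost-bijections $\sigma_n\colon I\to I$, each mapping every $K\in\mathcal I$ onto itself, such that the versions $\Gamma^{(n)}(x,y):=\Gamma(\sigma_n^{-1}(x),\sigma_n^{-1}(y))$ weak$^*$ converge to $\Gamma^{\Join\mathcal I}$. Granted the mixing lemma, the proof is standard. Since $L^1(I^2)$ is separable, weak$^*$ convergence on the unit ball of $L^\infty(I^2)$ is metrizable by some metric $d$. Applying the mixing lemma to each $\Gamma_j'$, choose $n_j$ so that $d(\Gamma_j^{(n_j)},(\Gamma_j')^{\Join\mathcal I})<1/j$, and set $\Gamma_j'':=\Gamma_j^{(n_j)}$. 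Then $\Gamma_j''\to W^{\Join\mathcal I}$ weak$^*$ by the triangle inequality, and $\int_K\int_L\Gamma_j''=\int_K\int_L\Gamma_j'$ for every $K,L\in\mathcal I$ follows by a change of variables, since the underlying bijection fixes each $K\in\mathcal I$ setwise.

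For the mixing lemma itself, I would construct $\sigma_n$ explicitly: on each $K\in\mathcal I$, subdivide $K$ into $n^2$ equal subintervals indexed by pairs $(i,j)\in\{1,\ldots,n\}^2$ in their natural order along $K$, and let $\sigma_n|_K$ be the measure-preserving bijection that affinely swaps the subinterval in position $(i,j)$ with the one in position $(j,i)$. Geometrically, any ``macro-block'' of $K$ of length $\lambda(K)/n$ is spread by $\sigma_n^{-1}$ into $n$ equally spaced ``micro-pieces'' of length $\lambda(K)/n^2$, so indicators of such macro-blocks weak$^*$ converge to constants. I would first verify $\Gamma^{(n)}\to\Gamma^{\Join\mathcal I}$ weak$^*$ for step graphons on a dyadic refinement of $\mathcal I$, where the claim reduces to a direct combinatorial count once $n$ exceeds the step scale, and then extend to general $\Gamma$ by $L^1$-approximation, using that the $L^1$-norm is preserved under the rearrangement $\sigma_n$.

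The main obstacle is the case $K=L$ in the mixing statement, because then both coordinates of $\Gamma$ are rearranged by the \emph{same} bijection, so one needs the product-type equidistribution $1_{\sigma_n^{-1}(A)}\otimes 1_{\sigma_n^{-1}(B)}\to\frac{\lambda(A)\lambda(B)}{\lambda(K)^2}1_{K\times K}$ in the weak$^*$ topology on $L^\infty(K\times K)$, not merely the coordinatewise equidistribution. I would handle this by first testing against separable functions $\phi(u)\chi(v)\in L^1(K\times K)$, for which the convergence factors and follows from the one-dimensional mixing property of $\sigma_n$, and then approximating an arbitrary test function in $L^1(K\times K)$ by finite linear combinations of separable functions, which form a dense subspace.
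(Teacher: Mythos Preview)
Your argument is correct, but it follows a genuinely different path from the paper's.

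The paper proceeds probabilistically: for each $n$ it samples a version $U_n$ of $\Gamma_n$ by independently and uniformly permuting $n$ equal stripes inside each cell of $\mathcal I$ (the distribution $\mathbb W(\Gamma_n,\mathcal I,n)$). It then fixes a rational rectangle inside some $I_i\times I_j$, applies McDiarmid's bounded-differences inequality to show that the random integral $\iint_{[p_1,p_2]\times[r_1,r_2]}U_n$ concentrates around its mean with error $O(n^{-1/4})$ and subgaussian tails, and uses Borel--Cantelli plus a countable union over rational rectangles to conclude that $U_n\to W^{\Join\mathcal I}$ weak$^*$ almost surely. The ``moreover'' clause is automatic because each permutation fixes every $K\in\mathcal I$ setwise.

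Your route is entirely deterministic: you isolate a \emph{single-graphon} mixing lemma via an explicit digit-swap interval exchange $\sigma_n$, verify it first for step graphons (where the two-dimensional integral factors and reduces to one-dimensional equidistribution of $\sigma_n$), push through to general $\Gamma$ by $L^1$-approximation, and finally combine the weak$^*$ continuity of stepping with metrizability of the weak$^*$ topology on bounded sets to run a diagonal selection $\Gamma_j'':=\Gamma_j^{(n_j)}$. The ``moreover'' clause again follows because $\sigma_n$ preserves each $K\in\mathcal I$.

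What each buys: the paper's randomized argument is quick once one is willing to invoke McDiarmid and Borel--Cantelli, and it handles the whole sequence $(\Gamma_n)$ at once. Your argument is more elementary---no probability, no concentration---and cleanly separates the two ingredients (mixing for a single graphon, then diagonalization), at the cost of having to build and verify the explicit map $\sigma_n$. Your worry about the diagonal block $K=L$ is legitimate in principle but, as your own reduction shows, dissolves once one tests against tensor products and uses the density of their span in $L^1$; in fact testing against indicator rectangles already makes the integral factor for step $\Gamma$, so the ``same bijection on both axes'' issue never bites.
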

The proof of Lemma~\ref{lem:attainaveraged} follows a relatively standard probabilistic argument. Suppose for simplicity that $\Gamma_1,\Gamma_2,\Gamma_3,\ldots$ weak$^*$ converges to $W$. Then, for each $n$, we consider a version $\Gamma'_n$ of $\Gamma_n$ which is obtained by splitting each interval  $A\in \mathcal I$ into $n$ subsets of the same measure and then permuting these subsets of $A$ at random. It can then be shown that $\Gamma'_1,\Gamma'_2,\Gamma'_3,\ldots$ converge to $W^{\Join\mathcal I}$ almost surely. The next two definitions are needed to make precise the notion of randomly permuting parts of the graphon within a given partition.
\begin{defn}
	Given a set $A\subset I$ of positive measure and a number $s\in\mathbb{N}$, we can consider a partition $A=\llbracket A \rrbracket^s_1\sqcup \llbracket A \rrbracket^s_2\sqcup \ldots \sqcup \llbracket A \rrbracket^s_s$, where each set $\llbracket A \rrbracket^s_i$ has measure $\frac{\lambda(A)}s$ and for each $1\le i<j\le s$, the set $\llbracket A \rrbracket^s_i$ is entirely to the left of $\llbracket A \rrbracket^s_j$. These conditions define the partition $A=\llbracket A \rrbracket^s_1\sqcup \llbracket A \rrbracket^s_2\sqcup \ldots \sqcup \llbracket A \rrbracket^s_s$ uniquely, up to null sets. For each $i,j\in [s]$ there is a natural, uniquely defined (up to null sets), measure preserving almost-bijection $\chi^{A,s}_{i,j}:\llbracket A\rrbracket^s_i\rightarrow \llbracket A\rrbracket^s_j$ which preserves the order on the real line.
\end{defn}
\begin{defn}\label{def:chy}
	Suppose that $\Gamma:I^2\rightarrow [0,1]$ is a graphon. For a partition $\mathcal I$ of $I$ into finitely many sets of positive measure, $I=I_1\sqcup I_2\sqcup \ldots \sqcup I_k$, and for $s\in\mathbb{N}$, we define a discrete distribution $\mathbb{W}(\Gamma,\mathcal{I},s)$ on graphons using the following procedure. We take $\pi_1,\ldots,\pi_k:[s]\rightarrow [s]$ independent uniformly random permutations. After these are fixed, we define a sample $W\sim\mathbb{W}(\Gamma,\mathcal{I},s)$ by 
	$$W(x,y)=\Gamma\left(\chi^{I_i,s}_{p,\pi_i(p)}(x),\chi^{I_j,s}_{q,\pi_j(q)}(y)\right)\quad \mbox{when $x\in \llbracket I_i\rrbracket^s_p$, $y\in \llbracket I_j\rrbracket^s_q$, $i,j\in [k]$, $p,q\in[s]$}\;.$$
	This defines the sample $W:I^2\rightarrow [0,1]$ uniquely up to null sets, and thus defines the whole distribution $\mathbb{W}(\Gamma,\mathcal{I},s)$. Observe that $\mathbb{W}(\Gamma,\mathcal{I},s)$ is supported on (some) versions of $\Gamma$.
	
	We call the sets $\llbracket I_j\rrbracket^s_q$ \emph{stripes}.
\end{defn}
\begin{proof}[Proof of Lemma~\ref{lem:attainaveraged}]
	By considering suitable versions of the graphons $\Gamma_n$, we can without loss of generality assume that the sequence $\Gamma_1,\Gamma_2,\Gamma_3,\ldots$ itself converges to $W$ in the weak$^*$ topology. For each $n\in\mathbb{N}$, let us sample $U_n\sim \mathbb{W}(\Gamma_n,\mathcal{I},n)$. We claim that the sequence $U_1,U_2,U_3,\ldots$ converges to $W^{\Join\mathcal I}$ in the weak$^*$ topology almost surely. As each $U_n$ is a version of $\Gamma_n$, this will prove the lemma. So, let us now turn to proving the claim.
	
	Let $i,j\in[k]$ be arbitrary. Further, let $0\le p_1<p_2\le 1$ and $0\le r_1<r_2\le 1$ be arbitrary rational numbers such that the rectangle $[p_1,p_2]\times[r_1,r_2]$ is contained (modulo a null set) in $I_i\times I_j$. Having fixed $i,j,p_1,p_2,r_1,r_2$, let us write $c$ for the value of $W^{\Join\mathcal I}$ on $I_i\times I_j$. For each $n\in\mathbb{N}$, let $E_n$ be the event that 
	$$\left|\iint_{[p_1,p_2]\times[r_1,r_2]}U_n\d(\lambda^{\oplus2})-c(p_2-p_1)(r_2-r_1)\right|>\sqrt[4]{1/n}+\tfrac 4n\;.$$
	Let us now bound the probability that $E_n$ occurs. To this end, let $Y_n$ be the value of $\iint_{[p_1,p_2]\times[r_1,r_2]}U_n\d(\lambda^{\oplus2})$. We clearly have $\EXP[Y_n]=c(p_2-p_1)(r_2-r_1)\pm\tfrac 4{n}$ (the error $\pm\tfrac 4{n}$ comes from those products of pairs of stripes that intersect both $[p_1,p_2]\times[r_1,r_2]$ and its complement). Therefore, if $E_n$ occurs then $|Y_n-\EXP[Y_n]|>\sqrt[4]{1/n}$. Suppose that we want to compute $Y_n$. From the $k$ random permutations $\pi_1,\pi_2,\ldots,\pi_k:[n]\rightarrow[n]$ used in Definition~\ref{def:chy} to define $U_n$, we only need to know the permutations $\pi_i$ and $\pi_j$. To generate these, we toss in i.i.d. points $i_1,i_2,\ldots,i_n,j_1,j_2,\ldots,j_n$ into the unit interval $I$; the Euclidean order of the points $i_1,i_2,\ldots,i_n$ naturally defines $\pi_i$ and similarly the points $j_1,j_2,\ldots,j_n$ naturally define $\pi_j$.\footnote{The exception being when some of the points $i_1,i_2,\ldots,i_n$ or of the points $j_1,j_2,\ldots,j_n$ coincide, in which case the order of these points does not determine a permutation. This event however happens almost never.} So, we can view $Y_n$ as a random variable on the probability space $I^{2n}$. Observe that if $\mathfrak{s}=(i_1,i_2,\ldots,i_n,j_1,j_2,\ldots,j_n)$ and $\mathfrak{s}'=(i'_1,i'_2,\ldots,i'_n,j'_1,j'_2,\ldots,j'_n)$ are two elements of $I^{2n}$ that differ in only one coordinate, then $|Y_n(\mathfrak{s})-Y_n(\mathfrak{s}')|\le \frac{2}{n}$. Thus the Method of Bounded Differences (see~\cite{McDiarmid1989}) tells us that 
	$$\PROB\left[E_n\right]\le\PROB\left[|Y_n-\EXP[Y_n]|>\sqrt[4]{1/n}\right]\le 2\exp\left(-\frac{2(\sqrt[4]{1/n})^2}{2n\cdot\left(\frac{2}{n}\right)^2}\right)=2\exp\left(-\sqrt{n}/4\right)\;.$$
	
	Because the sequence $\left(2\exp\left(-\sqrt{n}/4\right)\right)_{n=1}^\infty$ is summable, the Borel--Cantelli lemma allows to conclude that only finitely many events $E_n$ occur, almost surely. Thus, almost surely, for any weak$^*$ accumulation point $U$ of the sequence $U_1,U_2,U_3,\ldots$, we have 
	\begin{equation}\label{eq:reproduktor}
		\iint_{[p_1,p_2]\times[r_1,r_2]}U\d(\lambda^{\oplus2})=c(p_2-p_1)(r_2-r_1)\;.
	\end{equation}
	By applying the union bound, we obtain that~\eqref{eq:reproduktor} holds for all (countably many) choices of $i,j,p_1,p_2,r_1,r_2$, almost surely. Since the elements of $\mathcal{I}$ are intervals, the above system of rectangles $[p_1,p_2]\times[r_1,r_2]$ generates the Borel $\sigma$-algebra on $I^2$. Consequently, we obtain that $U\equiv W^{\Join\mathcal I}$, almost surely.
	
	The ``moreover'' part obviously follows from the proof.
\end{proof}

\subsection{Jensen's inequality and steppings}

Recall that one of the possible formulations of Jensen's inequality says that if $(\Omega,\lambda)$ is a measurable space with $\lambda(\Omega)>0$, $g:\Omega\rightarrow\mathbb R$ is a measurable function and $f:\mathbb R\rightarrow\mathbb R$ is a concave function then \begin{equation}\label{eq:Jensen}
f\left(\frac 1{\lambda(\Omega)}\int_\Omega g(x)\right)\ge\frac 1{\lambda(\Omega)}\int_\Omega f(g(x))\;.
\end{equation}
We use this formulation of Jensen's inequality to prove the following simple lemma.

\begin{lem}\label{lem:concavetriv}
	Let $f\colon[0,1]\rightarrow\mathbb R$ be a continuous and strictly concave function. Let $\Gamma\colon I^2\rightarrow [0,1]$ be a step-graphon with steps $I_1,I_2,\ldots, I_k$, and let $W\colon I^2\rightarrow [0,1]$ be another graphon such that $\int_{I_i\times I_j}W=\int_{I_i\times I_j}\Gamma$ for every $i,j=1,2,\ldots,k$. Then $\INT_f(W)\le\INT_f(\Gamma)$.
\end{lem}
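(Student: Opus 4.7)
The plan is to apply Jensen's inequality block-by-block on the partition $\{I_i\times I_j\}_{i,j}$, then sum. The key observation is that since $\Gamma$ is a step-graphon with steps $I_1,\ldots,I_k$, on each rectangle $I_i\times I_j$ of positive measure the function $\Gamma$ equals the constant
\[
c_{i,j}\;:=\;\frac{1}{\lambda^{\oplus 2}(I_i\times I_j)}\int_{I_i\times I_j}\Gamma\;=\;\frac{1}{\lambda^{\oplus 2}(I_i\times I_j)}\int_{I_i\times I_j}W\;,
\]
where the second equality uses the hypothesis $\int_{I_i\times I_j}W=\int_{I_i\times I_j}\Gamma$.

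Next, I would apply inequality~(\ref{eq:Jensen}) with $\Omega=I_i\times I_j$ (endowed with two-dimensional Lebesgue measure), $g=W_{|I_i\times I_j}$, and the given $f$. This yields
\[
\int_{I_i\times I_j} f(W(x,y))\;\le\;\lambda^{\oplus 2}(I_i\times I_j)\cdot f(c_{i,j})\;=\;\int_{I_i\times I_j} f(\Gamma(x,y))\;,
\]
where the last equality holds because $\Gamma\equiv c_{i,j}$ (up to a null set) on $I_i\times I_j$. Rectangles $I_i\times I_j$ of measure zero contribute nothing on either side and can be safely ignored.

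Finally, summing over all pairs $(i,j)\in\{1,\ldots,k\}^2$ and using that $\{I_i\times I_j\}$ partitions $I^2$ up to a null set, I obtain
\[
\INT_f(W)\;=\;\sum_{i,j=1}^{k}\int_{I_i\times I_j} f(W)\;\le\;\sum_{i,j=1}^{k}\int_{I_i\times I_j} f(\Gamma)\;=\;\INT_f(\Gamma)\;.
\]
There is no real obstacle here; the only thing worth being careful about is invoking Jensen's inequality in the correct direction (concave $f$ gives $\le$ after integration, matching the inequality in~(\ref{eq:Jensen})) and handling the measure-zero blocks, both of which are routine. Strict concavity of $f$ is not needed for this lemma, only concavity; the continuity of $f$ ensures measurability of $f\circ W$ so the integrals are well-defined.
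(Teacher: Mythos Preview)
Your proof is correct and follows essentially the same approach as the paper's: apply Jensen's inequality on each rectangle $I_i\times I_j$ to compare $\int_{I_i\times I_j} f(W)$ with $\lambda^{\oplus 2}(I_i\times I_j)\,f(c_{i,j})=\int_{I_i\times I_j} f(\Gamma)$, then sum over all $(i,j)$. Your additional remarks on measure-zero blocks and on the roles of continuity versus strict concavity are accurate but not needed for the argument.
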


\begin{proof}
	It clearly suffices to show that for every $i,j=1,2,\ldots,k$ it holds
	\begin{equation}\nonumber
	\int_{I_i}\int_{I_j}f(W(x,y))\le\int_{I_i}\int_{I_j}f(\Gamma(x,y))\;.
	\end{equation}
	So let us fix  $i,j$, and let $C_{i,j}$ be the constant for which $\Gamma_{|I_i\times I_j}=C_{i,j}$ almost everywhere. Then we have
	\begin{equation}\nonumber
	\begin{aligned}
	\int_{I_i}\int_{I_j}f(W(x,y))&\stackrel{(\ref{eq:Jensen})}{\le}\lambda^{\oplus 2}(I_i\times I_j)\cdot f\left(\frac 1{\lambda^{\oplus 2}(I_i\times I_j)}\int_{I_i}\int_{I_j}W(x,y)\right)\\
	&=\lambda^{\oplus 2}(I_i\times I_j)\cdot f\left(C_{i,j}\right)\\
	&=\int_{I_i}\int_{I_j}f(\Gamma(x,y))\;,
	\end{aligned}
	\end{equation}
	as we wanted.
\end{proof}

\section{Summaries of proofs}
In this section, we give an overview of the proof of Theorem~\ref{thm:compactANDminimizer}\ref{en:improve} in Section~\ref{ssec:overviewPartImprove}. Then, we explain in Section~\ref{ssec:connectionwithRL} that this proof can be viewed as an infinitesimal counterpart to the index-pumping lemma. Last, in Section~\ref{ssec:overviewPartCompact} we give a detailed outline of Theorem~\ref{thm:compactANDminimizer}\ref{en:compact}.
\subsection{Overview of proof of  Theorem~\ref{thm:compactANDminimizer}\ref{en:improve}}\label{ssec:overviewPartImprove}

Suppose for simplicity that the sequence $\Gamma_1,\Gamma_2,\Gamma_3,\ldots$ converges to $W$ in the weak$^*$ topology. The key step to the proof of Theorem~\ref{thm:compactANDminimizer}$\ref{en:improve}$ is Lemma~\ref{lem:neostraNerovnost}. There we prove that whenever we fix a sequence $(B_n)_{n=1}^{\infty}$ of measurable subsets of $I$ and define a new version $\Gamma'_n$ of $\Gamma_n$ (for every $n$) by ``shifting the set $B_n$ to the left side of the interval $I$'', then any weak$^*$ accumulation point $\widetilde W$ of the sequence $\Gamma'_1,\Gamma'_2,\Gamma'_3,\ldots$ satisfies $\INT_f(\widetilde W)\le\INT_f(W)$.
As this result relies on Jensen's inequality, we actually get $\INT_f(\widetilde W)<\INT_f(W)$ when we choose the sets $B_n$ carefully. ``Carefully'' means that each of the integrals $\int_{B_n}\int_{B_n}\Gamma_n(x,y)$ differs from the integral $\int_{B_n}\int_{B_n}W(x,y)$ at least by some given $\varepsilon>0$.
But observe that if the graphon $W$ is not a cut-norm accumulation point of the sequence $\Gamma_1,\Gamma_2,\Gamma_3,\ldots$ then it is always possible to choose the sets $B_n$.

\subsection{Connection between the proof of  Theorem~\ref{thm:compactANDminimizer}\ref{en:improve} and proofs of regularity lemmas}
\label{ssec:connectionwithRL}
Graphons could be regarded as ``the ultimate regularization''. Thus, it is instructive to see how our proof relates to the usual proofs of regularity lemmas (of which the weak regularity lemma of Frieze and Kannan~\cite{Frieze1999} is the most relevant). Recall that in these proofs of regularity lemmas one keeps refining a partition of a graph until the partition is regular. 

Let us give details. Let $f:[0,1]\rightarrow \mathbb R$ be an arbitrary continuous and strictly concave function. Suppose that $G$ is an $n$-vertex graph, and let $\mathcal P=(P_i)_{i=1}^k$ be a partition of $V(G)$ into sets. Then for each $i,j\in[k]$, we define $d_{ij}:=\frac{\sum_{u\in P_i, v\in P_j}\mathbf{1}_{uv\in E(G)}}{|P_i|\cdot|P_j|}$ (with the convention $\frac{0}{0}=0$). If $i\neq j$ then $d_{ij}$ corresponds to the bipartite density of the pair $G[P_i,P_j]$, and otherwise this corresponds to the density of the graph $G[P_i]$. Then we write
$$\INT_f(G;\mathcal P):=\sum_{i=1}^k\sum_{j=1}^k \frac{|P_i|\cdot |P_j|}{n^2}\cdot f(d_{ij})\;.$$
Note that we can express $\INT_f(G;\mathcal P)$ as $\INT_f(W_{G;\mathcal P})$, where $W_{G;\mathcal P}$ is a graphon representation of densities of $G$ according to the partition $\mathcal P$.
The index-pumping lemma, which we state here in the setting of the weak regularity lemma, asserts that non-regular partitions can  be refined while controlling the index. Let us recall that a partition $\mathcal P=(P_i)_{i=1}^k$ of $V(G)$ is \emph{weak $\epsilon$-regular} if for each $B\subset V(G)$ we have
$$e(G[B])=\frac{1}{2}\sum_{i=1}^k\sum_{j=1}^k d_{i,j}|B\cap P_i|\cdot |B\cap P_j|\pm \epsilon n^2\;.$$
\begin{lem}[Index-pumping lemma]\label{lem:indexpumping}
	Suppose that $\mathcal C$ is a partition of a graph $G$. If $B\subset V(G)$ is a witness that $\mathcal C$ is not $\epsilon$-regular, then splitting each cell $C\in \mathcal C$ into $C\cap B$ and $C\setminus B$ yields a partition $\mathcal D$ for which $$\INT_{x\mapsto -x^2}(G;\mathcal D)<\INT_{x\mapsto -x^2}(G;\mathcal C)-\frac{\epsilon^2}4\;.$$
\end{lem}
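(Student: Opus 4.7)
The plan is to interpret $\INT_{x\mapsto-x^2}(G;\mathcal Q)$ as (the negative of) the $L^2$-norm squared of the density step-function $W_{G;\mathcal Q}$, and to write the improvement upon refining $\mathcal C$ to $\mathcal D$ as a sum of nonnegative variance terms, one per pair of original cells. The non-regularity witness $B$ then pinpoints a particular block on which Cauchy--Schwarz converts the first-moment gap (from the definition of weak $\epsilon$-regularity) into a second-moment lower bound of order $\epsilon^2$.

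First I would fix the notation $P_i^0:=P_i\cap B$, $P_i^1:=P_i\setminus B$, so that $\mathcal D=\{P_i^a:i\in[k],\,a\in\{0,1\}\}$, and denote by $d_{ij}^{ab}$ the density of edges between $P_i^a$ and $P_j^b$. Since $d_{ij}$ is the $|P_i^a||P_j^b|$-weighted mean of the four values $d_{ij}^{ab}$ (as both sides equal the number of edges between $P_i$ and $P_j$), the elementary identity ``second moment $=$ squared mean $+$ variance'' --- a two-step instance of Lemma~\ref{lem:concavetriv} applied to $f(x)=-x^2$ --- yields, after summing over $(i,j)$ and dividing by $n^2$, the clean formula
$$\INT_{x\mapsto-x^2}(G;\mathcal C)-\INT_{x\mapsto-x^2}(G;\mathcal D)\;=\;\frac{1}{n^2}\sum_{i,j\in[k]}\sum_{a,b\in\{0,1\}}(d_{ij}^{ab}-d_{ij})^2|P_i^a||P_j^b|\;.$$

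Next, noting that $e(G[B])=\tfrac12\sum_{i,j}d_{ij}^{00}|P_i^0||P_j^0|$, the failure of weak $\epsilon$-regularity via $B$ becomes
$$\Bigl|\sum_{i,j}(d_{ij}^{00}-d_{ij})|P_i^0||P_j^0|\Bigr|>2\epsilon n^2\;.$$
Applying Cauchy--Schwarz with the weights $|P_i^0||P_j^0|$ and using $\sum_{i,j}|P_i^0||P_j^0|=|B|^2\le n^2$ gives
$$\sum_{i,j}(d_{ij}^{00}-d_{ij})^2|P_i^0||P_j^0|\;\ge\;4\epsilon^2 n^2\;.$$
Since the right-hand side of the variance identity above is a sum of nonnegative terms, the $(a,b)=(0,0)$ summand alone contributes at least $4\epsilon^2$ to the improvement, which is comfortably larger than the required $\epsilon^2/4$.

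I do not anticipate a genuine obstacle; the argument is mostly bookkeeping around the defect form of Cauchy--Schwarz. The one point that deserves care is the correct translation of the ``$\pm\epsilon n^2$'' in the definition of weak $\epsilon$-regularity into a quantitative statement about $\sum(d_{ij}^{00}-d_{ij})|P_i^0||P_j^0|$: the factor $2$ on the right-hand side arises from the $\tfrac12$ in the edge-counting convention, and this is what guarantees that the final Cauchy--Schwarz step lands (with room to spare) above the stated threshold.
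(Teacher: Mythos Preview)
The paper does not actually prove Lemma~\ref{lem:indexpumping}; it is stated without proof as a classical ingredient, and the paper explicitly refers to ``the so-called `defect form of the Cauchy--Schwarz inequality' in the usual proof of Lemma~\ref{lem:indexpumping}''. Your argument is precisely that standard proof: the variance decomposition
\[
\INT_{x\mapsto-x^2}(G;\mathcal C)-\INT_{x\mapsto-x^2}(G;\mathcal D)=\frac{1}{n^2}\sum_{i,j,a,b}(d_{ij}^{ab}-d_{ij})^2|P_i^a||P_j^b|
\]
is correct (it follows from $\sum_{a,b}d_{ij}^{ab}|P_i^a||P_j^b|=d_{ij}|P_i||P_j|$), your translation of the failure of weak $\epsilon$-regularity into $\bigl|\sum_{i,j}(d_{ij}^{00}-d_{ij})|P_i^0||P_j^0|\bigr|>2\epsilon n^2$ matches the paper's definition, and the Cauchy--Schwarz step is applied correctly. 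You in fact obtain an improvement of at least $4\epsilon^2$, comfortably stronger than the $\epsilon^2/4$ in the statement; the constant in the paper is simply not sharp (and would correspondingly give a better exponent than the $2^{\lceil 4/\epsilon^2\rceil}$ quoted afterwards).
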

For completeness, let us recall the the proof of the weak regularity lemma, which states that for each $\epsilon>0$, each graph has a weak $\epsilon$-regular partition with at most $2^{\lceil\frac4{\epsilon^2}\rceil}$ parts. One starts with a singleton partition. At any stage, if the current partition is not weak $\epsilon$-regular, then Lemma~\ref{lem:indexpumping} allows to decrease the index by at least $\frac{\epsilon^2}4$ while doubling the number of cells in the partition. Since $\INT_{x\mapsto -x^2}(G;\cdot )\in[-1,0]$, we must terminate in at most $\lceil\frac4{\epsilon^2}\rceil$ steps.

Let us now draw the analogy between Lemma~\ref{lem:indexpumping} and Theorem~\ref{thm:compactANDminimizer}\ref{en:improve} and its proof. Firstly, note that Theorem~\ref{thm:compactANDminimizer} allows other functions than $x\mapsto -x^2$ used in Lemma~\ref{lem:indexpumping}. This is however not a serious restriction. Indeed, replacing the so-called ``defect form of 
the Cauchy--Schwarz inequality'' in the usual proof of Lemma~\ref{lem:indexpumping} by Jensen's inequality, we could obtain a statement for general continuous strictly concave functions. In fact, this has already been used in~\cite{Scott}, where a different choice of a concave function was necessary. So let us now move to the main analogy. Let us consider $W$ as in Theorem~\ref{thm:compactANDminimizer}\ref{en:improve}. As in Section~\ref{ssec:overviewPartImprove}, let us assume that $W$ is the weak* limit of $\Gamma_1,\Gamma_2,\Gamma_3\ldots$. Suppose that $W$ is not an accumulation point of $\Gamma_{1},\Gamma_{2},\Gamma_{3},\ldots$ with respect to the cut-norm, and let $B_1,B_2,B_3,\ldots\subset I$ be witnesses for this. Now, the ``shifting $B_n$ to the left'' described in Section~\ref{ssec:overviewPartImprove} can be viewed as splitting each interval $J\subset I$ into $J\cap B_n$ and $J\setminus B_n$ (we think of $J$ as being very small, thus representing an ``infinitesimally small cluster''), just as in Lemma~\ref{lem:indexpumping}.

We pose a conjecture which goes in this direction in Section~\ref{ssec:ConjectureFiniteGraphs}.

\subsection{Overview of proof of Theorem~\ref{thm:compactANDminimizer}$\ref{en:compact}$}\label{ssec:overviewPartCompact}
Let us begin with the most straightforward attempt for a proof. For now, let us work with the simplifying assumption that all accumulation points are actually limits. As we shall see later, this simplifying assumption is a major cheat for which an extra patch will be needed. Then, let
\begin{equation*}
m:=\inf\left\{\INT_f(W)\colon W\in\LIM(\Gamma_1,\Gamma_2,\Gamma_3,\ldots)\right\}\;.
\end{equation*}
For each $k\in\mathbb{N}$, let us fix a sequence $\Gamma_1^k,\Gamma_2^k,\Gamma_3^k,\ldots$ of versions of $\Gamma_1,\Gamma_2,\Gamma_3,\ldots$ which converges in the weak$^*$ topology to a graphon $\widetilde{W}_k$ with $\INT_f(\widetilde{W}_k)<m+\frac1k$. Now, we might diagonalize and hope that any weak$^*$ accumulation point (whose existence is guaranteed by the Banach--Alaoglu Theorem) $W^*$ of the sequence $\Gamma_1^1,\Gamma_2^2,\Gamma_3^3,\ldots$ satisfies $\INT_f(W^*)\le m$. The reason for this hope being vain is the discontinuity of $\INT_f(\cdot)$ with respect to the weak$^*$ topology. 
\begin{figure}
	\includegraphics[scale=1]{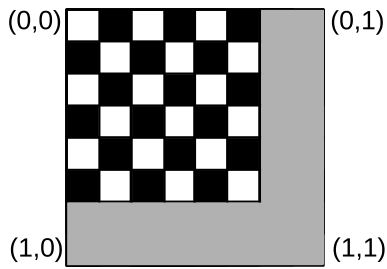}
	\caption{The graphon $\widetilde{W}_3$ from Section~\ref{ssec:overviewPartCompact}. Value 0 is white, value $\frac12$ is gray, value $1$ is black.}
	\label{fig:almostchess}
\end{figure}
As an example, let us take a situation when each $\widetilde{W}_k$ is a $2(k+2)\times 2(k+2)$-chessboard $\{0,1\}$-valued function, with the last two rows and columns having value $\frac12$ (see Figure~\ref{fig:almostchess}). In other words, most of each graphon $\widetilde{W}_k$ corresponds to a complete balanced bipartite graphon, to which an additional artificial subdivision to each of its parts to $k$ subparts was introduced. These subparts were interlaced one after another, except that the vertices of the last subpart of each part were mixed together. (These graphons were clearly chosen nonoptimally in the sense that the mixing of the last two parts is undesired. We chose these graphons in this example here to have richer features to study.) All the graphons $\widetilde{W}_k$ have small values of $\INT_f(\cdot)$. On the other hand, the weak$^*$ limit of the sequence is the graphon $W_{\mathrm{const}}\equiv\frac12$ whose value $\INT_f(\cdot)$ is bigger. There is a lesson to learn from this example. While for larger $k$, the versions in the sequence $\Gamma_1^k,\Gamma_2^k,\Gamma_3^k,\ldots$ will be aligned on $I$ in a more optimal way locally, the global structure may get undesirably more convoluted as $k\rightarrow \infty$. To remedy this, we consider a sequence of version of $\Gamma_1,\Gamma_2,\Gamma_3,\ldots$ in which the structure of measure-preserving transformation on a rough level is inherited from measure preserving transformations leading to $\widetilde{W}_1$. Within each step corresponding to the step-graphon $\widetilde{W}_1$, the structure of the measure-preserving transformation is inherited from measure preserving transformations leading to $\widetilde{W}_2$, and so on. An example of this procedure is given in Figure~\ref{fig:twosteps}. It can be shown that any weak$^*$ accumulation point $W^*$ of these reordered graphons has the property that $\INT_f(W^*)\le \limsup_n \INT_f\widetilde{W}_n$, as was needed.
\begin{figure}
	\includegraphics[scale=.85]{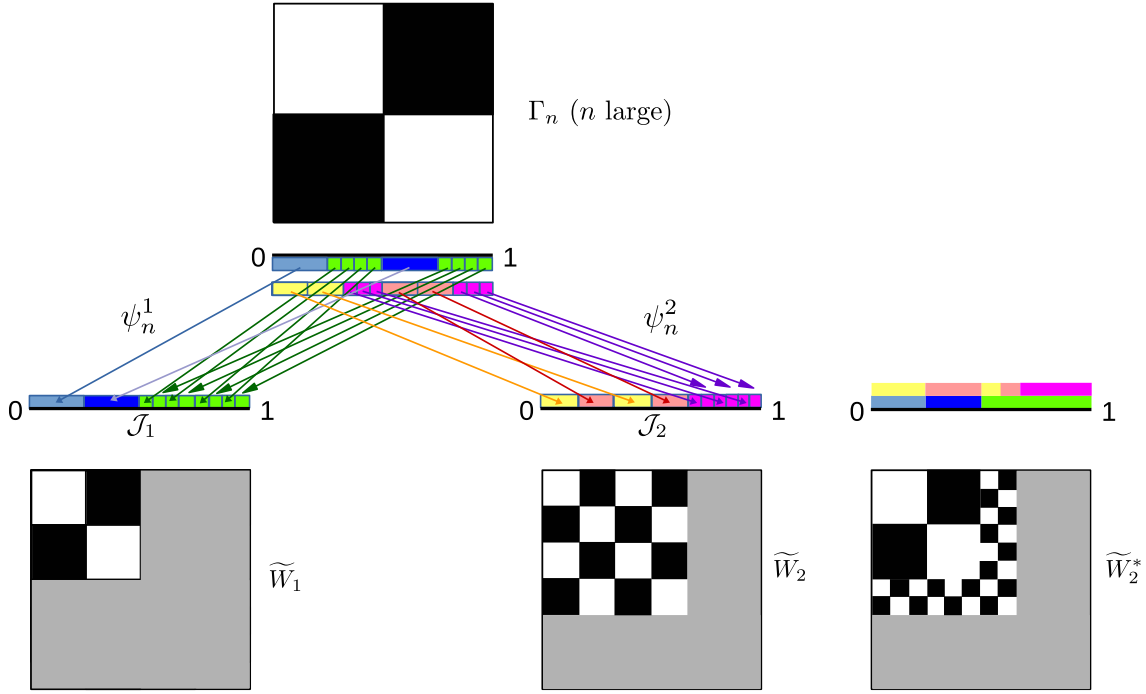}
	\caption{An example of reordering from Section~\ref{ssec:overviewPartCompact}. The top shows a graphon $\Gamma_n$, versions of which are close to $\widetilde{W}_1$ and $\widetilde{W}_2$ in the weak$^*$ topology. The two measure preserving transformations $\psi^{1}_n$ and $\psi^{2}_n$ which witness this closeness are shown with colors. The graphon $\widetilde{W}_2^*$ emerges by taking the partition whose global structure from $\widetilde{W}_1$ is refined according to the more local structure from $\widetilde{W}_2$. Iterating this process would lead to a sequence of graphons $(\widetilde{W}_k^*)_k$ which has the property that for any weak$^*$ accomulation point $W^*$ we have $\INT_f(W^*)\le \limsup_n \INT_f\widetilde{W}_n$.}
	\label{fig:twosteps}
\end{figure}

Let us now explain why the assumption that all sequences converge weak$^*$ leaves a substantial gap in the proof. Recall that the information how the partition $\mathcal J^k$ of $U_{k}$ interacts with the measure preserving almost-bijections on graphons $\mathfrak{s}_k\subset (\Gamma_1,\Gamma_2,\Gamma_3,\ldots)$ that converge to $\widetilde{W}_k$ gives us crucial directions as how to reorder and refine the subsequence of graphons $\mathfrak{s}_{k+1}$ that converges to $\widetilde{W}_{k+1}$. Let us again stress that while the existence of the subsequences $\mathfrak{s}_{j}$ is guaranteed by weak$^*$ compactness, we have no control on their properties. So, it can be that $\mathfrak{s}_{k}$ is disjoint from $\mathfrak{s}_{k+1}$. In other words, we do not get the needed information how to reorder and refine the graphons in $\mathfrak{s}_{k+1}$. To remedy this problem, we prove a lemma (Lemma~\ref{lem:ACC=LIM}) which says that for every sequence $\Gamma_1,\Gamma_2,\Gamma_3,\ldots$ of graphons there exists a subsequence $\Gamma_{k_1},\Gamma_{k_2},\Gamma_{k_3},\ldots$ such that
\begin{equation*}
\inf\left\{\INT_f(W)\colon W\in\ACC(\Gamma_{k_1},\Gamma_{k_2},\Gamma_{k_3},\ldots)\right\}=\inf\left\{\INT_f(W)\colon W\in\LIM(\Gamma_{k_1},\Gamma_{k_2},\Gamma_{k_3},\ldots)\right\}\;.
\end{equation*}
Applying this lemma first, the arguments above become sound for the subsequence $\Gamma_{k_1},\Gamma_{k_2},\Gamma_{k_3},\ldots$.

\section{Proof of Theorem~\ref{thm:compactANDminimizer}$\ref{en:improve}$}\label{sec:proofImprove}

The following key lemma (or its subsequent corollary) is used in both proofs of Theorem~\ref{thm:compactANDminimizer}\ref{en:improve} and Theorem~\ref{thm:compactANDminimizer}\ref{en:compact}.

\begin{lem}\label{lem:neostraNerovnost}
Suppose that $f:[0,1]\rightarrow\mathbb R$ is an arbitrary continuous and strictly concave function. Suppose that $\Gamma_1,\Gamma_2,\Gamma_3,\ldots:I^2\rightarrow[0,1]$ is a sequence of graphons which converges to a graphon $W:I^2\rightarrow[0,1]$ in the weak$^*$ topology. Suppose that $B_1,B_2,B_3,\ldots$ is an arbitrary sequence of subsets of $I$. 
For each $n$, let $\mathcal J_n$ be the ordered partition of $I$ into two sets $B_n$ and $I\setminus B_n$ (in this order).
Then every graphon $\widetilde W$ that is a weak* accumulation point of the sequence $\prescript{}{\mathcal J_1}\Gamma_1,\prescript{}{\mathcal J_2}\Gamma_2,\prescript{}{\mathcal J_3}\Gamma_3,\ldots$ satisfies $\INT_f(\widetilde W)\le\INT_f(W)$.

Moreover, suppose that for the sequence $n_1<n_2<n_3<\ldots$ for which $\prescript{}{\mathcal J_{n_1}}\Gamma_{n_1},\prescript{}{\mathcal J_{n_2}}\Gamma_{n_2},\prescript{}{\mathcal J_{n_3}}\Gamma_{n_3},\ldots$ weak* converges to $\widetilde W$, we have that ${\bf 1}_{B_{n_1}},{\bf 1}_{B_{n_2}},{\bf 1}_{B_{n_3}},\ldots$ converges to a function $\psi\colon I\rightarrow[0,1]$ in the weak$^*$ topology. Let $\theta\colon I\rightarrow I$ be defined by $\theta(x)=\int_0^x\psi(y)\:\d(y)$.
If we have 
\begin{equation}\label{eq:TYR}
\lambda^{\otimes 2}\left(\left\{(x,y)\in I^2\::\: \psi(x)>0, \psi(y)>0, W(x,y)\neq\widetilde W(\theta(x),\theta(y))\right\}\right)>0 
\end{equation}
then $\INT_f(\widetilde W)<\INT_f(W)$.
\end{lem}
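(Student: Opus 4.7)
The plan is to derive the pointwise a.e.\ identity
\begin{equation}\label{eq:keyformula}
W(x,y)=\sum_{\varepsilon,\delta\in\{0,1\}} w_\varepsilon(x)\,w_\delta(y)\,\widetilde W\bigl(\sigma_\varepsilon(x),\sigma_\delta(y)\bigr),
\end{equation}
in which $w_1:=\psi$, $w_0:=1-\psi$, $\sigma_1:=\theta$, and $\sigma_0:=\beta+\mu$ with $\mu(x):=x-\theta(x)=\int_0^x(1-\psi)$; by sequential Banach--Alaoglu one may pass to a further subsequence so that also $\mathbf{1}_{B_n}\to\psi$ weak$^*$, whence $\lambda(B_n)\to\beta$. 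Since the four coefficients $w_\varepsilon(x)w_\delta(y)$ are nonnegative and sum to $1$, strict concavity of $f$ furnishes the pointwise Jensen bound
\[
f(W(x,y))\ge\sum_{\varepsilon,\delta\in\{0,1\}}w_\varepsilon(x)\,w_\delta(y)\,f\bigl(\widetilde W(\sigma_\varepsilon(x),\sigma_\delta(y))\bigr).
\]
Integrating over $I^2$ and changing variables $(x,y)\mapsto(\sigma_\varepsilon(x),\sigma_\delta(y))$ in each of the four summands---exploiting that $\theta$ pushes the measure $\psi\,dx$ forward to Lebesgue measure on $[0,\beta]$ and $\mu$ pushes $(1-\psi)\,dx$ to Lebesgue measure on $[0,1-\beta]$---turns the $(\varepsilon,\delta)$-summand into $\int_{R_{\varepsilon\delta}}f(\widetilde W)$ over the corresponding subrectangle of $I^2$. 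Summing the four pieces recovers $\INT_f(\widetilde W)$, establishing $\INT_f(W)\ge\INT_f(\widetilde W)$.

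To prove \eqref{eq:keyformula}, test $\int g\,\Gamma_n$ against a continuous tensor product $g(x,y)=\phi(x)\chi(y)$ and split this integral into four pieces over $B_n\times B_n$, $B_n\times(I\setminus B_n)$, $(I\setminus B_n)\times B_n$, and $(I\setminus B_n)^2$. On each piece perform the measure-preserving change of variables $u=\gamma_{\mathcal J_n}(x)$, $v=\gamma_{\mathcal J_n}(y)$, turning $\Gamma_n$ into $\prescript{}{\mathcal J_n}\Gamma_n$ and the test function into $\phi(\gamma_n^{-1}(u))\chi(\gamma_n^{-1}(v))$. Weak$^*$ convergence of $\mathbf{1}_{B_n}$ to $\psi$, tested against indicators $\mathbf{1}_{[0,x]}$, yields pointwise convergence of $\theta_n(x):=\int_0^x\mathbf{1}_{B_n}$ to $\theta(x)$; since the $\theta_n$ are uniformly $1$-Lipschitz, this upgrades to uniform convergence on $I$. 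Consequently $\gamma_n^{-1}(u)\to\theta^{-1}(u)$ for every $u\in(0,\beta)$ outside the at-most-countable, hence null, set of plateau values of $\theta$, and analogously $\gamma_n^{-1}(v)\to\mu^{-1}(v-\beta)$ off the plateau values of $\mu$. Continuity of $\phi,\chi$ together with bounded convergence upgrades these a.e.\ pointwise limits to $L^1$ convergence of the transformed test function, which pairs with the weak$^*$ convergence $\prescript{}{\mathcal J_n}\Gamma_n\to\widetilde W$ to compute the limit of each of the four sub-integrals. Undoing each change of variables via $du=\psi(x)dx$ on $\{\psi>0\}$ (and the analogous rule for $\mu$) rewrites the sum of the four limits as $\int_I\int_I\phi(x)\chi(y)\,R(x,y)\,dxdy$, where $R$ denotes the right-hand side of \eqref{eq:keyformula}. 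Comparing with the limit $\int g\,\Gamma_n\to\int g\,W$ and using density of tensor products of continuous functions in $L^1(I^2)$ yields \eqref{eq:keyformula} almost everywhere.

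For the strict-inequality moreover clause: at any $(x,y)$ with $\psi(x),\psi(y)>0$ the coefficient on $\widetilde W(\theta(x),\theta(y))$ in \eqref{eq:keyformula} is positive, and strict Jensen holds at $(x,y)$ unless the four positive-weight values $\widetilde W(\sigma_\varepsilon(x),\sigma_\delta(y))$ all coincide; but if they did all coincide, then \eqref{eq:keyformula} would force $W(x,y)=\widetilde W(\theta(x),\theta(y))$, which is ruled out on a positive-measure set by hypothesis~\eqref{eq:TYR}. Hence Jensen is strict there, and integration delivers $\INT_f(W)>\INT_f(\widetilde W)$. The main technical obstacle is the rigorous proof of \eqref{eq:keyformula}, especially handling the sets $\{\psi=0\}$ and $\{\psi=1\}$ where the inverses $\theta^{-1}$ and $\mu^{-1}$ become set-valued; these singularities are rendered harmless because the corresponding weight factors in \eqref{eq:keyformula} vanish there, so the limiting ambiguity makes no contribution to the identity.
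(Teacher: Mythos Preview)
Your proof is correct and follows essentially the same route as the paper: establish the pointwise convex-combination identity expressing $W(x,y)$ as a four-term convex combination of values of $\widetilde W$ (the paper's \eqref{eq:convexCombination}), apply Jensen's inequality pointwise, and then integrate via the pushforward change of variables $u=\theta(x)$, $v=\xi(y)$, etc., to recover $\INT_f(\widetilde W)$. The only notable technical difference is that the paper tests the identity against indicators of rectangles $[p_1,p_2]\times[q_1,q_2]$ rather than continuous tensor products, which lets it avoid the generalized inverses $\theta^{-1},\mu^{-1}$ altogether by comparing integrals over $[\alpha_{\mathcal J_n,1}(p_1),\alpha_{\mathcal J_n,1}(p_2)]$ directly with those over $[\theta(p_1),\theta(p_2)]$.
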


\begin{proof}
By passing to a subsequence, we may assume that the sequence $\prescript{}{\mathcal J_1}\Gamma_1,\prescript{}{\mathcal J_2}\Gamma_2,\prescript{}{\mathcal J_3}\Gamma_3,\ldots$ is convergent to $\widetilde W$ in the weak$^*$ topology, and that the sequence ${\bf 1}_{B_1},{\bf 1}_{B_2},{\bf 1}_{B_3},\ldots$ converges in the weak$^*$ topology to $\psi\colon I\rightarrow[0,1]$.
We define $\xi\colon I\rightarrow I$ by $\xi(x)=\theta(1)+\int_0^x(1-\psi(y))\:\d(y)$.

\begin{claim}\label{cl:ManuelSpi}
	For every two intervals $[p_1,p_2],[q_1,q_2]\subset I$  we have 
	\begin{equation}\label{eq:mozart}
	\begin{aligned}
	\int_{p_1}^{p_2}\int_{q_1}^{q_2}W(x,y)&=\int_{p_1}^{p_2}\int_{q_1}^{q_2}\widetilde W(\theta(x),\theta(y))\psi(x)\psi(y)\\
	&\quad+\int_{p_1}^{p_2}\int_{q_1}^{q_2}\widetilde W(\theta(x),\xi(y))\psi(x)(1-\psi(y))\\
	&\quad+\int_{p_1}^{p_2}\int_{q_1}^{q_2}\widetilde W(\xi(x),\theta(y))(1-\psi(x))\psi(y)\\
	&\quad+\int_{p_1}^{p_2}\int_{q_1}^{q_2}\widetilde W(\xi(x),\xi(y))(1-\psi(x))(1-\psi(y))\;.
	\end{aligned}
	\end{equation}
\end{claim}
\begin{proof}[Proof of Claim~\ref{cl:ManuelSpi}]
	By using the fact that $\Gamma_n\stackrel{w^*}{\rightarrow}W$ together with the identity $ab+a(1-b)+(1-a)b+(1-a)(1-b)=1$ we get that
	\begin{equation}\label{eq:roztrzeniNaCtyriIntegraly}
	\begin{aligned}	\int_{p_1}^{p_2}\int_{q_1}^{q_2}W(x,y)&=\lim\limits_{n\rightarrow\infty}\int_{p_1}^{p_2}\int_{q_1}^{q_2}\Gamma_n(x,y)\\
	&=
	\lim_{n\rightarrow\infty}\int_{p_1}^{p_2}\int_{q_1}^{q_2}\Gamma_n(x,y){\bf 1}_{{B_n}}(x){\bf 1}_{{B_n}}(y)\\
	&\qquad+\lim_{n\rightarrow\infty}\int_{p_1}^{p_2}\int_{q_1}^{q_2}\Gamma_n(x,y){\bf 1}_{{B_n}}(x)(1-{\bf 1}_{{B_n}}(y))\\
	&\qquad+\lim_{n\rightarrow\infty}\int_{p_1}^{p_2}\int_{q_1}^{q_2}\Gamma_n(x,y)(1-{\bf 1}_{{B_n}}(x)){\bf 1}_{{B_n}}(y)\\
	&\qquad+\lim_{n\rightarrow\infty}\int_{p_1}^{p_2}\int_{q_1}^{q_2}\Gamma_n(x,y)(1-{\bf 1}_{{B_n}}(x))(1-{\bf 1}_{{B_n}}(y))\;.
	\end{aligned}
	\end{equation}
	Next we rewrite the integral following the first limit on the right-hand side of (\ref{eq:roztrzeniNaCtyriIntegraly}). To this end, we use the notation from~\eqref{eq:pocitac} together with the obvious differentiation formula
	\begin{equation}\label{eq:derivace}
	(\alpha_{\mathcal J_n,1})'(x)={\bf 1}_{B_n}(x)\quad\text{for a.e. }x\in I
	\end{equation}
	(and also, we use the fact that $\alpha_{\mathcal J_n,1|B_n}$ is an almost-bijection from $B_n$ onto the interval $[0,\int_0^1{\bf 1}_{B_n}(y)]$, and so it makes sense to talk about its inverse).
	We have
	\begin{equation}\label{eq:substituce}
	\begin{aligned}
	&\int_{p_1}^{p_2}\int_{q_1}^{q_2}\Gamma_n(x,y){\bf 1}_{{B_n}}(x){\bf 1}_{{B_n}}(y)\\
	\JUSTIFY{integration by substitution}=&\int_{\alpha_{\mathcal J_n,1}(p_1)}^{\alpha_{\mathcal J_n,1}(p_2)}\int_{\alpha_{\mathcal J_n,1}(q_1)}^{\alpha_{\mathcal J_n,1}(q_2)}\Gamma_n(\alpha_{\mathcal J_n,1}^{-1}(x),\alpha_{\mathcal J_n,1}^{-1}(y))\\
	\JUSTIFY{$\gamma_{\mathcal J_n}(x)=\alpha_{\mathcal J_n,1}(x)$ for every $x\in B_n$}=&\int_{\alpha_{\mathcal J_n,1}(p_1)}^{\alpha_{\mathcal J_n,1}(p_2)}\int_{\alpha_{\mathcal J_n,1}(q_1)}^{\alpha_{\mathcal J_n,1}(q_2)}\Gamma_n(\gamma_{\mathcal J_n}^{-1}(x),\gamma_{\mathcal J_n}^{-1}(y))\\
	=&\int_{\alpha_{\mathcal J_n,1}(p_1)}^{\alpha_{\mathcal J_n,1}(p_2)}\int_{\alpha_{\mathcal J_n,1}(q_1)}^{\alpha_{\mathcal J_n,1}(q_2)}\prescript{}{\mathcal J_n}\Gamma_n(x,y)\;.
	\end{aligned}
	\end{equation}
	Therefore, we have
	\begin{equation}\label{eq:error}
	\begin{aligned}
	&\left|\int_{p_1}^{p_2}\int_{q_1}^{q_2}\Gamma_n(x,y){\bf 1}_{{B_n}}(x){\bf 1}_{{B_n}}(y)-\int_{\theta(p_1)}^{\theta(p_2)}\int_{\theta(q_1)}^{\theta(q_2)}\prescript{}{\mathcal J_n}\Gamma_n(x,y)\right|\\
	\stackrel{(\ref{eq:substituce})}{=}&\left|\int_{\alpha_{\mathcal J_n,1}(p_1)}^{\alpha_{\mathcal J_n,1}(p_2)}\int_{\alpha_{\mathcal J_n,1}(q_1)}^{\alpha_{\mathcal J_n,1}(q_2)}\prescript{}{\mathcal J_n}\Gamma_n(x,y)-\int_{\theta(p_1)}^{\theta(p_2)}\int_{\theta(q_1)}^{\theta(q_2)}\prescript{}{\mathcal J_n}\Gamma_n(x,y)\right|\\
	\le&\left|\alpha_{\mathcal J_n,1}(p_1)-\theta(p_1)\right|+\left|\alpha_{\mathcal J_n,1}(p_2)-\theta(p_2)\right|+\left|\alpha_{\mathcal J_n,1}(q_1)-\theta(q_1)\right|+\left|\alpha_{\mathcal J_n,1}(q_2)-\theta(q_2)\right|\;.
	\end{aligned}
	\end{equation}
	The fact that ${\bf 1}_{B_n}\stackrel{w^*}{\rightarrow}\psi$ immediately implies that $\alpha_{\mathcal J_n,1}(x)\rightarrow \theta(x)$ for every $x\in I$, and so we conclude that the right-hand side, and thus also the left-hand side, of (\ref{eq:error}), tends to 0. Therefore (note that the following limits exist as $\Gamma_n\stackrel{w^*}{\rightarrow}W$)
	\begin{equation}\label{eq:sumand1}
	\begin{aligned}
	\lim_{n\rightarrow\infty}\int_{p_1}^{p_2}\int_{q_1}^{q_2}\Gamma_n(x,y){\bf 1}_{{B_n}}(x){\bf 1}_{{B_n}}(y)&=
	\lim_{n\rightarrow\infty}\int_{\theta(p_1)}^{\theta(p_2)}\int_{\theta(q_1)}^{\theta(q_2)}\prescript{}{\mathcal J_n}\Gamma_n(x,y)\\
	\JUSTIFY{$\prescript{}{\mathcal J_n}\Gamma_n\stackrel{w^*}{\rightarrow}\widetilde W$}&=\int_{\theta(p_1)}^{\theta(p_2)}\int_{\theta(q_1)}^{\theta(q_2)}\widetilde W(x,y)\\
	\JUSTIFY{integration by substitution}&=\int_{p_1}^{p_2}\int_{q_1}^{q_2}\widetilde W(\theta(x),\theta(y))\psi(x)\psi(y)\;.
	\end{aligned}
	\end{equation}
	In a very analogous way as we derived (\ref{eq:sumand1}), one can verify that 
	\begin{align}\label{eq:sumand2}
		\lim_{n\rightarrow\infty}\int_{p_1}^{p_2}\int_{q_1}^{q_2}\Gamma_n(x,y){\bf 1}_{{B_n}}(x)(1-{\bf 1}_{{B_n}}(y))&=\int_{p_1}^{p_2}\int_{q_1}^{q_2}\widetilde W(\theta(x),\xi(y))\psi(x)(1-\psi(y))\;,
		\\
		\label{eq:sumand3}
		\lim_{n\rightarrow\infty}\int_{p_1}^{p_2}\int_{q_1}^{q_2}\Gamma_n(x,y)(1-{\bf 1}_{{B_n}}(x)){\bf 1}_{{B_n}}(y)&=\int_{p_1}^{p_2}\int_{q_1}^{q_2}\widetilde W(\xi(x),\theta(y))(1-\psi(x))\psi(y)
		\;,
		\\
		\label{eq:sumand4}
		\lim_{n\rightarrow\infty}\int_{p_1}^{p_2}\int_{q_1}^{q_2}\Gamma_n(x,y)(1-{\bf 1}_{{B_n}}(x))(1-{\bf 1}_{{B_n}}(y))&=\int_{p_1}^{p_2}\int_{q_1}^{q_2}\widetilde W(\xi(x),\xi(y))(1-\psi(x))(1-\psi(y))\;.
	\end{align}
	By putting (\ref{eq:roztrzeniNaCtyriIntegraly}), (\ref{eq:sumand1}), (\ref{eq:sumand2}), (\ref{eq:sumand3}) and (\ref{eq:sumand4}) together, we get~\eqref{eq:mozart}.
\end{proof}

Since the sets of the form $[p_1,p_2]\times [q_1,q_2]$ generate the Borel $\sigma$-algebra on $I^2$, we conclude from Claim~\ref{cl:ManuelSpi} that for almost every $(x,y)\in I^2$ we have that
\begin{equation}\label{eq:convexCombination}
\begin{aligned}
W(x,y)=&\widetilde W(\theta(x),\theta(y))\psi(x)\psi(y)+\widetilde W(\theta(x),\xi(y))\psi(x)(1-\psi(y))\\
&\quad+\widetilde W(\xi(x),\theta(y))(1-\psi(x))\psi(y)+\widetilde W(\xi(x),\xi(y))(1-\psi(x))(1-\psi(y))\;.
\end{aligned}
\end{equation}
Note that the right-hand side of (\ref{eq:convexCombination}) is a convex combination of the four terms
\begin{equation}\label{def:termsFromConvComb}
\widetilde W(\theta(x),\theta(y))\;,\quad\widetilde W(\theta(x),\xi(y))\;,\quad\widetilde W(\xi(x),\theta(y))\;,\quad\widetilde W(\xi(x),\xi(y))\;.
\end{equation}
Therefore we have
\begin{equation}\label{eq:ostraVsNeostra}
\begin{aligned}
\INT_f(W)&=\int_0^1\int_0^1f(W(x,y))\\
\JUSTIFY{$f$ is concave}&\stackrel{(\ref{eq:convexCombination})}{\ge}\int_0^1\int_0^1f\left(\widetilde W(\theta(x),\theta(y))\right)\psi(x)\psi(y)\\
&\qquad+\int_0^1\int_0^1f\left(\widetilde W(\theta(x),\xi(y))\right)\psi(x)(1-\psi(y))\\
&\qquad+\int_0^1\int_0^1f\left(\widetilde W(\xi(x),\theta(y))\right)(1-\psi(x))\psi(y)\\
&\qquad+\int_0^1\int_0^1f\left(\widetilde W(\xi(x),\xi(y))\right)(1-\psi(x))(1-\psi(y))\\
\JUSTIFY{integration by substitution}&=\int_0^{\theta(1)}\int_0^{\theta(1)}f\left(\widetilde W(x,y)\right)+\int_0^{\theta(1)}\int_{\theta(1)}^1f\left(\widetilde W(x,y)\right)\\
&\qquad+\int_{\theta(1)}^1\int_0^{\theta(1)}f\left(\widetilde W(x,y)\right)+\int_{\theta(1)}^1\int_{\theta(1)}^1f\left(\widetilde W(x,y)\right)\\
&=\int_0^1\int_0^1f\left(\widetilde W(x,y)\right)=\INT_f(\widetilde W)\;.
\end{aligned}
\end{equation}

To prove the ``moreover'' part, suppose that we have~\eqref{eq:TYR}.
Then the convex combination (\ref{eq:convexCombination}) is not trivial on a set of positive measure. This is all we need as then we have a sharp inequality in (\ref{eq:ostraVsNeostra}) because $f$ is strictly concave.
\end{proof}

We do not use the next corollary right now but we will need it in Section~\ref{sec:proofCompact}.

\begin{cor}\label{cor:zmensiEntropii}
Suppose that $f:[0,1]\rightarrow\mathbb R$ is an arbitrary continuous and strictly concave function. Suppose that $\Gamma_1,\Gamma_2,\Gamma_3,\ldots:I^2\rightarrow[0,1]$ is a sequence of graphons which converges to a graphon $W:I^2\rightarrow[0,1]$ in the weak$^*$ topology. Suppose that $\ell$ is a fixed natural number and that for every $n$, $\mathcal J_n$ is an ordered partition of $I$ into $\ell$ sets $B_1^n,B_2^n,\ldots,B_{\ell}^n$.
Then for every graphon $\widetilde W$ that is a weak* accumulation point of the graphons $\prescript{}{\mathcal J_1}\Gamma_1,\prescript{}{\mathcal J_2}\Gamma_2,\prescript{}{\mathcal J_3}\Gamma_3,\ldots$ we have $\INT_f(\widetilde W)\le\INT_f(W)$.
\end{cor}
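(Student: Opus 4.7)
The plan is to mimic the proof of Lemma~\ref{lem:neostraNerovnost}, replacing the $2^2 = 4$ summands appearing there by $\ell^2$ summands. First I would pass to a subsequence so that $\prescript{}{\mathcal J_n}\Gamma_n\to \widetilde W$ in the weak$^*$ topology, and (by a further diagonal extraction) so that ${\bf 1}_{B_i^n}\to \psi_i$ in the weak$^*$ topology for every $i\in\{1,\ldots,\ell\}$. Since $\sum_{i=1}^\ell {\bf 1}_{B_i^n}\equiv 1$ and the weak$^*$ limit of the constant $1$ is $1$, the limits satisfy $\sum_{i=1}^\ell \psi_i\equiv 1$ almost everywhere, with $\psi_i\colon I\to[0,1]$.

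Next, I would set $\lambda_i:=\int_0^1\psi_i$ and $c_i:=\sum_{k<i}\lambda_k$, and define $\theta_i\colon I\to I$ by $\theta_i(x):=c_i+\int_0^x \psi_i(y)\,\d(y)$. These generalize the maps $\theta, \xi$ of Lemma~\ref{lem:neostraNerovnost} to the $\ell$-part setting, and the images $\theta_1(I),\ldots,\theta_\ell(I)$ tile $I$ (up to null sets) as $[c_1,c_1+\lambda_1],\ldots,[c_\ell,c_\ell+\lambda_\ell]$. The key step is then the $\ell$-part analogue of Claim~\ref{cl:ManuelSpi}: for every two intervals $[p_1,p_2],[q_1,q_2]\subset I$,
\begin{equation*}
\int_{p_1}^{p_2}\int_{q_1}^{q_2} W(x,y)=\sum_{i,j=1}^\ell \int_{p_1}^{p_2}\int_{q_1}^{q_2}\widetilde W(\theta_i(x),\theta_j(y))\,\psi_i(x)\psi_j(y)\;.
\end{equation*}
Its derivation expands $\Gamma_n\equiv\Gamma_n\cdot\bigl(\sum_i {\bf 1}_{B_i^n}\bigr)\bigl(\sum_j {\bf 1}_{B_j^n}\bigr)$ into $\ell^2$ integrals. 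Each summand is handled exactly as in~(\ref{eq:substituce})--(\ref{eq:sumand4}); the main ingredient is that $\alpha_{\mathcal J_n,i}(x)\to\theta_i(x)$ pointwise, which follows immediately from the weak$^*$ convergences ${\bf 1}_{B_i^n}\to\psi_i$ via the explicit formulas in~(\ref{eq:pocitac}).

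Since rectangles generate the Borel $\sigma$-algebra on $I^2$, this identity promotes to the pointwise (a.e.) equality $W(x,y)=\sum_{i,j}\widetilde W(\theta_i(x),\theta_j(y))\psi_i(x)\psi_j(y)$. Because $\sum_{i,j}\psi_i(x)\psi_j(y)=\bigl(\sum_i\psi_i(x)\bigr)\bigl(\sum_j\psi_j(y)\bigr)=1$ almost everywhere and all weights are nonnegative, the right-hand side is a genuine convex combination. Applying Jensen's inequality pointwise to the concave $f$, and then the change of variables $u=\theta_i(x), v=\theta_j(y)$ (under which $\psi_i(x)\,\d(x)$ pushes forward to Lebesgue measure on $[c_i,c_i+\lambda_i]$), yields
\begin{equation*}
\INT_f(W)\ge \sum_{i,j=1}^\ell \int_0^1\int_0^1 f\bigl(\widetilde W(\theta_i(x),\theta_j(y))\bigr)\psi_i(x)\psi_j(y)=\sum_{i,j=1}^\ell\int_{c_i}^{c_i+\lambda_i}\int_{c_j}^{c_j+\lambda_j} f(\widetilde W(u,v))=\INT_f(\widetilde W)\;,
\end{equation*}
using once more that the rectangles $[c_i,c_i+\lambda_i]\times[c_j,c_j+\lambda_j]$ tile $I^2$.

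I do not anticipate a genuine obstacle: the only nontrivial point is the clean bookkeeping for the $\ell^2$-term analogue of Claim~\ref{cl:ManuelSpi}, which is a straightforward (if notationally heavier) repetition of the computation already carried out for $\ell=2$. No new analytic input beyond Lemma~\ref{lem:neostraNerovnost} and Jensen's inequality is needed.
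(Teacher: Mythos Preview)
Your proposal is correct, but the paper takes a different and shorter route. Rather than redoing the computation of Lemma~\ref{lem:neostraNerovnost} with $\ell^2$ summands, the paper reduces to the two-set case by applying Lemma~\ref{lem:neostraNerovnost} iteratively $\ell$ times. Concretely, for each $i\in\{1,\ldots,\ell\}$ it considers the intermediate ordered partition $\mathcal J_n^i$ consisting of $B_{\ell-i+1}^n,\ldots,B_\ell^n$ and their complement, passes to a common subsequence along which each intermediate sequence $\prescript{}{\mathcal J_n^i}\Gamma_n$ converges weak$^*$ to some $W_i$, and then observes that going from $\prescript{}{\mathcal J_n^{i}}\Gamma_n$ to $\prescript{}{\mathcal J_n^{i+1}}\Gamma_n$ is precisely a single ``shift one set to the left'' move. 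Lemma~\ref{lem:neostraNerovnost} then gives the chain $\INT_f(\widetilde W)\le\INT_f(W_{\ell-1})\le\cdots\le\INT_f(W_1)\le\INT_f(W)$. The paper's approach buys brevity: no new computation is needed, only bookkeeping of which set is being shifted at each stage. Your approach buys transparency: it exhibits $W(x,y)$ directly as a single convex combination of $\ell^2$ values of $\widetilde W$, making the role of Jensen's inequality immediate and avoiding the need to pass through $\ell-1$ auxiliary weak$^*$ limits. Either argument is fine; yours is notationally heavier but requires no induction.
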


\begin{proof}
For every natural number $n$ and every $i\in\{1,\ldots,\ell\}$, we denote by $\mathcal J_n^i$ the ordered partition of $I$ consisting of the sets $B_{\ell-i+1}^n,B_{\ell-i+2}^n,\ldots,B_{\ell}^n$ and $I\setminus\bigcup_{j=\ell-i+1}^{\ell} B_{j}^n$ (in this order).
Consider these $\ell+1$ sequences of graphons:
\begin{equation}\nonumber
\begin{aligned}
\mathcal S_0&\colon \Gamma_1,\Gamma_2,\Gamma_3,\ldots\\
\mathcal S_1&\colon \prescript{}{\mathcal J_1^1}\Gamma_1,\prescript{}{\mathcal J_2^1}\Gamma_2,\prescript{}{\mathcal J_3^1}\Gamma_3,\ldots\\
\mathcal S_2&\colon \prescript{}{\mathcal J_1^2}\Gamma_1,\prescript{}{\mathcal J_2^2}\Gamma_2,\prescript{}{\mathcal J_3^2}\Gamma_3,\ldots\\
\vdots\\
\mathcal S_{\ell}&\colon \prescript{}{\mathcal J_1^{\ell}}\Gamma_1,\prescript{}{\mathcal J_2^{\ell}}\Gamma_2,\prescript{}{\mathcal J_3^{\ell}}\Gamma_3,\ldots\;,\\
\end{aligned}
\end{equation}
so that the sequence $\mathcal S_{\ell}$ is precisely $\prescript{}{\mathcal J_1}\Gamma_1,\prescript{}{\mathcal J_2}\Gamma_2,\prescript{}{\mathcal J_3}\Gamma_3,\ldots$.
Let us fix $\widetilde W\in\ACC(\mathcal S_{\ell})$.
By passing to a subsequence, we may assume that the sequence $\mathcal S_i$ converges to some graphon $W_i$ in the weak$^*$ topology for every $i=1,2,\ldots,\ell-1$.
It remains to apply Lemma~\ref{lem:neostraNerovnost} $\ell$-times in a row. First, we apply it on the sequence $\mathcal S_0$ of graphons and on the sequence $B_{\ell}^1,B_{\ell}^2,B_{\ell}^3,\ldots$ of subsets of $I$ to conclude that $\INT_f(W_1)\le\INT_f(W)$. Next, we apply it on the sequence $\mathcal S_1$ of graphons and on the sequence $B_{\ell-1}^1,B_{\ell-1}^2,B_{\ell-1}^3,\ldots$ of subsets of $I$ to conclude that $\INT_f(W_2)\le\INT_f(W_1)\le\INT_f(W)$. In the last step, we apply it on the sequence $\mathcal S_{\ell-1}$ of graphons and on the sequence $B_1^1,B_1^2,B_1^3,\ldots$ of subsets of $I$ to conclude that $\INT_f(\widetilde W)\le\INT_f(W_{\ell-1})\le\ldots\le\INT_f(W_1)\le\INT_f(W)$.
\end{proof}

Now we can prove Theorem~\ref{thm:compactANDminimizer}\ref{en:improve}.

By passing to a subsequence, we may assume that the sequence $\Gamma_1,\Gamma_2,\Gamma_3,\ldots$ converges to $W$ in the weak$^*$ topology. As $W$ is not an accumulation point of the sequence  $\Gamma_1,\Gamma_2,\Gamma_3,\ldots$ in the cut-norm, there is $\varepsilon>0$ and a natural number $n_0$ such that $\|\Gamma_n-W\|_\square\ge\varepsilon$ for every $n\ge n_0$. By passing to a subsequence, we may suppose that $\|\Gamma_n-W\|_\square\ge\varepsilon$ for every natural number $n$. By the definition of the cut-norm, there is a sequence $B_1,B_2,B_3,\ldots$ of subsets of $I$ such that for every natural number $n$ we have $\left|\int_{x\in B_n}\int_{y\in B_n} \left(\Gamma_n(x,y)-W(x,y)\right)\right|\ge\varepsilon$. This means that either \begin{align}\label{eq:vetsiHustota}
\int_{x\in B_n}\int_{y\in B_n} \Gamma_n(x,y)&\ge\int_{x\in B_n}\int_{y\in B_n}W(x,y)+\varepsilon\quad\text{or}\\
\nonumber
\int_{x\in B_n}\int_{y\in B_n} \Gamma_n(x,y)&\le\int_{x\in B_n}\int_{y\in B_n}W(x,y)-\varepsilon\;.
\end{align}
By passing to a subsequence, we may assume that only one of these two cases occurs. We stick to the case when (\ref{eq:vetsiHustota}) holds for every natural number $n$ (the other case is analogous). By passing to a subsequence once again, we may assume that the sequence ${\bf 1}_{B_1},{\bf 1}_{B_2},{\bf 1}_{B_3},\ldots$ converges in the weak$^*$ topology to some $\psi\colon I\rightarrow[0,1]$. 	
For every natural number $n$, let $\mathcal J_n$ be the ordered partition of $I$ into two sets $B_n$ and $I\setminus B_n$ (in this order). This allows us to define $\alpha_{\mathcal J_n,1},\alpha_{\mathcal J_n,2},\gamma_{\mathcal J_n}\colon I\rightarrow I$ as in~\eqref{eq:pocitac}, and versions $\prescript{}{\mathcal J_1}\Gamma_1,\prescript{}{\mathcal J_2}\Gamma_2,\prescript{}{\mathcal J_3}\Gamma_3,\ldots$ of $\Gamma_1,\Gamma_2,\Gamma_3,\ldots$. We pass to a subsequence again to assure that the sequence $\prescript{}{\mathcal J_1}\Gamma_1,\prescript{}{\mathcal J_2}\Gamma_2,\prescript{}{\mathcal J_3}\Gamma_3,\ldots$ is convergent in the weak$^*$ topology, and we denote the weak$^*$ limit by $\widetilde W$.
Now Lemma~\ref{lem:neostraNerovnost} tells us that $\INT_f(\widetilde W)\le\INT_f(W)$, and that to prove that this inequality is sharp we only need to verify~\eqref{eq:TYR}. So to complete the proof, it suffices to prove the following claim.

\begin{claim}\label{cl:klavesnice}
We have
\begin{equation}\nonumber
\int_0^1\int_0^1\widetilde W(\theta(x),\theta(y))\psi(x)\psi(y)\ge\int_0^1\int_0^1W(x,y)\psi(x)\psi(y)+\tfrac 12\varepsilon\;.
\end{equation}
\end{claim}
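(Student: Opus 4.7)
The plan is to pass to the limit $n\to\infty$ directly in the hypothesis (\ref{eq:vetsiHustota}), which reads
$$\int_0^1\int_0^1\Gamma_n(x,y)\mathbf{1}_{B_n}(x)\mathbf{1}_{B_n}(y)\ \ge\ \int_0^1\int_0^1 W(x,y)\mathbf{1}_{B_n}(x)\mathbf{1}_{B_n}(y)+\varepsilon\;.$$
If the left-hand side converges to $\int_0^1\int_0^1\widetilde W(\theta(x),\theta(y))\psi(x)\psi(y)$ and the right-hand side converges to $\int_0^1\int_0^1 W(x,y)\psi(x)\psi(y)+\varepsilon$, then the claimed inequality (even with $\varepsilon$ in place of $\tfrac12\varepsilon$) follows immediately.

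For the left-hand side, the convergence is exactly the content of (\ref{eq:sumand1}) applied with $[p_1,p_2]=[q_1,q_2]=[0,1]$, which was already established inside the proof of Claim~\ref{cl:ManuelSpi}. No new work is needed here.

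The main step is the right-hand side, where $W$ is fixed and the indicators are changing. I would argue by Fubini. For every $y\in I$ the function $x\mapsto W(x,y)$ lies in $L^\infty(I)\subset L^1(I)$, and the weak$^*$ convergence ${\bf 1}_{B_n}\to\psi$ in $L^\infty(I)$ therefore yields
$$g_n(y):=\int_0^1 W(x,y){\bf 1}_{B_n}(x)\,\mathrm d x\ \longrightarrow\ g(y):=\int_0^1 W(x,y)\psi(x)\,\mathrm d x\qquad\text{for a.e. }y\in I.$$
Since $|g_n|\le 1$, the bounded convergence theorem gives $g_n\to g$ in $L^1(I)$. Writing
$$\int_0^1\int_0^1 W(x,y){\bf 1}_{B_n}(x){\bf 1}_{B_n}(y)-\int_0^1\int_0^1 W(x,y)\psi(x)\psi(y)\ =\ \int_0^1{\bf 1}_{B_n}(y)\bigl(g_n(y)-g(y)\bigr)\mathrm d y+\int_0^1\bigl({\bf 1}_{B_n}(y)-\psi(y)\bigr)g(y)\mathrm d y\;,$$
the first summand is bounded in absolute value by $\|g_n-g\|_1\to 0$, and the second summand tends to $0$ because $g\in L^1(I)$ and ${\bf 1}_{B_n}\to\psi$ in the weak$^*$ topology. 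This proves the desired convergence.

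The only conceptual subtlety is the argument that products of weak$^*$-convergent sequences of indicators pair correctly against a fixed $L^1$ function on $I^2$; but since $W$ is bounded this is handled by the Fubini/bounded-convergence step above. Combining the two convergences with~(\ref{eq:vetsiHustota}) and taking limits yields the claim with $\varepsilon$ on the right-hand side, which is stronger than~$\tfrac12\varepsilon$.
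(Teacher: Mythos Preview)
Your proof is correct and in fact sharper than the paper's: you obtain the inequality with $\varepsilon$ rather than $\tfrac12\varepsilon$.

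The paper's argument follows the same outline --- rewrite the left-hand side as a limit of $\int_{B_n}\int_{B_n}\Gamma_n$, apply~(\ref{eq:vetsiHustota}), then pass to the limit in $\int_{B_n}\int_{B_n}W$ --- but it re-derives the left-hand side limit from scratch via the substitution $\int_0^{\theta(1)}\int_0^{\theta(1)}\widetilde W = \lim_n\int_0^{\theta(1)}\int_0^{\theta(1)}\prescript{}{\mathcal J_n}\Gamma_n$, and then compares this to $\int_0^{\alpha_{\mathcal J_n,1}(1)}\int_0^{\alpha_{\mathcal J_n,1}(1)}\prescript{}{\mathcal J_n}\Gamma_n$ using only $\alpha_{\mathcal J_n,1}(1)\to\theta(1)$; the crude bound on the difference of the two domains is where the $\tfrac12\varepsilon$ is spent. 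Your observation that~(\ref{eq:sumand1}) with $[p_1,p_2]=[q_1,q_2]=[0,1]$ already gives the exact limit avoids this loss entirely. On the right-hand side, the paper simply invokes ``${\bf 1}_{B_n}\overset{w^*}{\to}\psi$'' to pass from $\int_{B_n}\int_{B_n}W$ to $\iint W\psi\psi$, whereas you supply the Fubini/bounded-convergence justification that this step genuinely requires (weak$^*$ convergence of ${\bf 1}_{B_n}$ alone does not directly control the product ${\bf 1}_{B_n}(x){\bf 1}_{B_n}(y)$). So your route is both tighter and more carefully argued at that point.
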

\begin{proof}[Proof of Claim~\ref{cl:klavesnice}] We have
\begin{equation}\nonumber
\begin{aligned}
\int_0^1\int_0^1\widetilde W(\theta(x),\theta(y))\psi(x)\psi(y)
=&\int_0^{\theta(1)}\int_0^{\theta(1)}\widetilde W(x,y)\\
\JUSTIFY{$\prescript{}{\mathcal J_n}\Gamma_n\stackrel{w^*}{\rightarrow}\widetilde W$}=&\lim\limits_{n\rightarrow\infty}\int_0^{\theta(1)}\int_0^{\theta(1)}\prescript{}{\mathcal J_n}\Gamma_n(x,y)\\
\JUSTIFY{for large enough $n$, as $\alpha_{\mathcal J_n,1}(1)\rightarrow \theta(1)$}\ge&\limsup\limits_{n\rightarrow\infty}\int_0^{\alpha_{\mathcal J_n,1}(1)}\int_0^{\alpha_{\mathcal J_n,1}(1)}\prescript{}{\mathcal J_n}\Gamma_n(x,y)-\tfrac 12\varepsilon\\
\JUSTIFY{integration by substitution}=&\limsup\limits_{n\rightarrow\infty}\int_0^1\int_0^1\prescript{}{\mathcal J_n}\Gamma_n(\alpha_{\mathcal J_n,1}(x),\alpha_{\mathcal J_n,1}(y)){\bf 1}_{B_n}(x){\bf 1}_{B_n}(y)-\tfrac 12\varepsilon\\
\JUSTIFY{$\gamma_{\mathcal J_n}(x)=\alpha_{\mathcal J_n,1}(x)$ for every $x\in B_n$}=&\limsup\limits_{n\rightarrow\infty}\int_{B_n}\int_{B_n}\prescript{}{\mathcal J_n}\Gamma_n(\gamma_{\mathcal J_n}(x),\gamma_{\mathcal J_n}(y))-\tfrac 12\varepsilon\\
=&\limsup\limits_{n\rightarrow\infty}\int_{B_n}\int_{B_n}\Gamma_n(x,y)-\tfrac 12\varepsilon\\
\stackrel{(\ref{eq:vetsiHustota})}{\ge}&\limsup\limits_{n\rightarrow\infty}\int_{B_n}\int_{B_n}W(x,y)+\tfrac 12\varepsilon\\
\JUSTIFY{${\mathbf 1}_{B_n}\stackrel{w^*}{\rightarrow}\psi$}=&\int_0^1\int_0^1W(x,y)\psi(x)\psi(y)+\tfrac 12\varepsilon\;.
\end{aligned}
\end{equation}
\end{proof}

\begin{rem}
	The initial step when we ``shift the sets $B_n$ to the left'' crucially relies on the Euclidean order on $I$. This order is needless for the theory of graphons, i.e., graphons can be defined on a square of an arbitrary atomless separable probability space $\Omega$. A linear order on $\Omega$ can be always introduced additionally, as $\Omega$ is measure-isomorphic to $I$. So, while our results work in full generality for an arbitrary $\Omega$, we wonder if our argument can be modified so that the proof would naturally work without assuming a linear structure of the underlying probability space.
\end{rem}
\section{Proof of Theorem~\ref{thm:compactANDminimizer}$\ref{en:compact}$}\label{sec:proofCompact}
The bulk of the proof is given after proving the following key lemma.

\begin{lem}\label{lem:ACC=LIM}
For every sequence $\Gamma_1,\Gamma_2,\Gamma_3,\ldots:I^2\rightarrow[0,1]$ of graphons there exists a subsequence $\Gamma_{k_1},\Gamma_{k_2},\Gamma_{k_3},\ldots$ such that
\begin{equation}\nonumber
\inf\left\{\INT_f(W)\colon W\in\ACC(\Gamma_{k_1},\Gamma_{k_2},\Gamma_{k_3},\ldots)\right\}=\inf\left\{\INT_f(W)\colon W\in\LIM(\Gamma_{k_1},\Gamma_{k_2},\Gamma_{k_3},\ldots)\right\}\;.
\end{equation}
\end{lem}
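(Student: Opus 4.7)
The plan is to build $\Gamma_{k_1},\Gamma_{k_2},\ldots$ by a nested-diagonal construction that exploits the monotonicity of $\ACC$ and $\LIM$ under refinement of the index set. For a subsequence $S\subseteq\mathbb N$, write $A(S):=\inf\{\INT_f(W):W\in\ACC((\Gamma_j)_{j\in S})\}$ and $L(S):=\inf\{\INT_f(W):W\in\LIM((\Gamma_j)_{j\in S})\}$; trivially $L(S)\ge A(S)$, and the goal is to produce $S^*$ with equality. Two elementary observations drive the argument: first, if $S'\subseteq S$ then $\ACC((\Gamma_j)_{j\in S'})\subseteq\ACC((\Gamma_j)_{j\in S})$ (any family of versions of the smaller sequence extends to a family of versions of the larger one by picking arbitrary versions on the remaining indices), so $A(S')\ge A(S)$; second, both $\ACC$ and $\LIM$ are invariant under modification of finitely many terms, since weak$^*$ behaviour is tail-determined. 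Finally, $A$ is uniformly bounded, since $f$ is bounded on $[0,1]$.

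I construct nested infinite index sets $\mathbb N=S_0\supseteq S_1\supseteq S_2\supseteq\cdots$ inductively. Given $S_n$, pick $W_n\in\ACC((\Gamma_j)_{j\in S_n})$ with $\INT_f(W_n)<A(S_n)+2^{-n}$, which exists by the definition of infimum. By definition of $\ACC$, some choice of versions of $(\Gamma_j)_{j\in S_n}$ has $W_n$ as a weak$^*$ accumulation point; let $S_{n+1}\subseteq S_n$ be an infinite sub-index-set along which these versions in fact weak$^*$-converge to $W_n$, so that $W_n\in\LIM((\Gamma_j)_{j\in S_{n+1}})$. Because $A(S_n)$ is non-decreasing and bounded, it converges to some $\alpha\in\mathbb R$. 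Now extract the diagonal $S^*=\{k_1<k_2<\ldots\}$ with $k_n\in S_n$, which is possible since each $S_n$ is infinite.

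It remains to verify $A(S^*)=L(S^*)=\alpha$. The tail of $S^*$ from position $n$ is contained in $S_n$, so by the monotonicity and tail-invariance observations $A(S^*)\ge A(S_n)$ for every $n$, hence $A(S^*)\ge\alpha$. Conversely, fix $n$ and consider the family of versions witnessing $W_n\in\LIM((\Gamma_j)_{j\in S_{n+1}})$; restricting it to those indices of $S^*$ that lie in $S_{n+1}$ (all but finitely many) still gives a weak$^*$-convergent sequence with limit $W_n$, and filling in arbitrary versions on the remaining initial indices of $S^*$ produces versions of $(\Gamma_j)_{j\in S^*}$ converging weak$^*$ to $W_n$. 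Thus $W_n\in\LIM((\Gamma_j)_{j\in S^*})$, which gives $L(S^*)\le\INT_f(W_n)<A(S_n)+2^{-n}$ for all $n$ and therefore $L(S^*)\le\alpha$. Combined with $L(S^*)\ge A(S^*)\ge\alpha$, this forces equality.

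The only real delicacy is bookkeeping of versions: the families witnessing different $W_n\in\LIM((\Gamma_j)_{j\in S^*})$ are genuinely different (each $n$ uses its own choice of versions of $S_{n+1}$, restricted and arbitrarily extended on an initial segment of $S^*$). This causes no trouble because membership in $\LIM$ is tested separately for each target graphon, so no globally coherent choice of versions of $S^*$ is needed.
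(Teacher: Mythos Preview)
Your proof is correct and follows essentially the same nested-diagonal construction as the paper: both build a decreasing chain of index sets by repeatedly selecting a near-minimizing accumulation point and passing to a subsequence along which it becomes a limit point, then diagonalize and use tail-invariance together with the monotonicity $A(S')\ge A(S)$ for $S'\subseteq S$. The only differences are cosmetic (indexing conventions, $2^{-n}$ versus $1/n$, and your explicit naming of the limit $\alpha$).
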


\begin{proof}
We start by finding countably many subsequences $\mathcal S_1,\mathcal S_2,\mathcal S_3,\ldots$ of the sequence $\Gamma_1,\Gamma_2,\Gamma_3,\ldots$ such that for every natural number $n$ we have:
\begin{itemize}
\item[(i)] $\mathcal S_{n+1}$ is a subsequence of $\mathcal S_n$, and
\item[(ii)] there exists $W_{n+1}\in\LIM(\mathcal S_{n+1})$ such that
\begin{equation}\label{eq:hlad}
\INT_f(W_{n+1})<\inf\left\{\INT_f(W)\colon W\in\ACC(\mathcal S_n)\right\}+\tfrac 1n\;.
\end{equation}
\end{itemize}
This is done by induction. In the first step, we just define the sequence $\mathcal S_1$ to be the original sequence $\Gamma_1,\Gamma_2,\Gamma_3,\ldots$. Next suppose that we have already defined the subsequence $\mathcal S_n$ for some natural number $n$. Then there is a graphon $W_{n+1}\in\ACC(\mathcal S_n)$ such that
\begin{equation}\nonumber
\INT_f(W_{n+1})<\inf\left\{\INT_f(W)\colon W\in\ACC(\mathcal S_n)\right\}+\tfrac 1n\;.
\end{equation}
Now we find a subsequence $\mathcal S_{n+1}$ of $\mathcal S_n$ such that some versions of the graphons from $\mathcal S_{n+1}$ converge to $W_{n+1}$ in the weak$^*$ topology. This finishes the construction.

Now we use the diagonal method to define, for every natural number $n$, the graphon $\Gamma_{k_n}$ to be the $n$th element of the sequence $\mathcal S_n$. Then we have for every $n$ that
\begin{equation}\nonumber
\begin{aligned}
&\inf\left\{\INT_f(W)\colon W\in\ACC(\Gamma_{k_1},\Gamma_{k_2},\Gamma_{k_3},\ldots)\right\}\\
\JUSTIFY{$\Gamma_{k_n},\Gamma_{k_{n+1}},\Gamma_{k_{n+2}},\ldots$ is a subsequence of $\mathcal S_n$}\ge&\inf\left\{\INT_f(W)\colon W\in\ACC(\mathcal S_n)\right\}\\
\stackrel{\eqref{eq:hlad}}{>}&\INT_f(W_{n+1})-\tfrac 1n\\
\JUSTIFY{$W_{n+1}\in\LIM(\mathcal S_{n+1})\subset\LIM(\Gamma_{k_1},\Gamma_{k_2},\Gamma_{k_3},\ldots)$}\ge&\inf\left\{\INT_f(W)\colon W\in\LIM(\Gamma_{k_1},\Gamma_{k_2},\Gamma_{k_3},\ldots)\right\}-\tfrac 1n\;,
\end{aligned}
\end{equation}
and so
\begin{equation}\nonumber
\inf\left\{\INT_f(W)\colon W\in\ACC(\Gamma_{k_1},\Gamma_{k_2},\Gamma_{k_3},\ldots)\right\}\ge\inf\left\{\INT_f(W)\colon W\in\LIM(\Gamma_{k_1},\Gamma_{k_2},\Gamma_{k_3},\ldots)\right\}\;.
\end{equation}
The other inequality is trivial.
\end{proof}

We can now give the proof of Theorem~\ref{thm:compactANDminimizer}\ref{en:compact}.

By using Lemma~\ref{lem:ACC=LIM} and by passing to a subsequence, we may assume that \begin{equation}\nonumber
\inf\left\{\INT_f(W)\colon W\in\ACC(\Gamma_1,\Gamma_2,\Gamma_3,\ldots)\right\}=\inf\left\{\INT_f(W)\colon W\in\LIM(\Gamma_1,\Gamma_2,\Gamma_3,\ldots)\right\}\;.
\end{equation}
We construct the desired subsequence $\Gamma_{k_1},\Gamma_{k_2},\Gamma_{k_3},\ldots$ by the following construction.

In the first step, we find a graphon $W_1\in\LIM(\Gamma_1,\Gamma_2,\Gamma_3,\ldots)$ such that
\begin{equation}\nonumber
\INT_f(W_1)<\inf\left\{\INT_f(W)\colon W\in\ACC(\Gamma_1,\Gamma_2,\Gamma_3,\ldots)\right\}+1\;.
\end{equation}
By Lemma~\ref{lem:aproximacePrumerama}, there is a partition $\mathcal J_1$ of $I$ into finitely many intervals of positive measure such that $|\INT_f(W_1)-\INT_f(W_1^{\Join\mathcal J_1})|<1$. Then we clearly have
\begin{equation}\nonumber
\INT_f(W_1^{\Join\mathcal J_1})<\inf\left\{\INT_f(W)\colon W\in\ACC(\Gamma_1,\Gamma_2,\Gamma_3,\ldots)\right\}+2\;.
\end{equation}
By Lemma~\ref{lem:attainaveraged}, the graphon $W_1^{\Join\mathcal J_1}$ is also an element of the set $\LIM(\Gamma_1,\Gamma_2,\Gamma_3,\ldots)$, and so there is a sequence $\Gamma_1^1,\Gamma_2^1,\Gamma_3^1,\ldots$ of versions of $\Gamma_1,\Gamma_2,\Gamma_3,\ldots$ that converges to $\widetilde W_1:=W_1^{\Join\mathcal J_1}$ in the weak$^*$ topology. We define $\Gamma_{k_1}:=\Gamma_1$, and we also define a sequence $q_1^1,q_2^1,q_3^1,\ldots$ to be the increasing sequence of all natural numbers.

Now fix a natural number $n$ and suppose that we have already defined a finite subsequence $\Gamma_{k_1},\Gamma_{k_2},\ldots,\Gamma_{k_n}$ of $\Gamma_1,\Gamma_2,\Gamma_3,\ldots$. Suppose also that for every $1\le i\le n$, we have already constructed
\begin{itemize}
\item[(i)] a step-graphon $\widetilde W_i$ with steps given by some partition $\mathcal J_i$ of $I$ into finitely many intervals of positive measure such that $\mathcal J_i$ is a refinement of $\mathcal J_{i-1}$ (if $i>1$) and such that
\begin{equation}\nonumber
\INT_f(\widetilde W_i)<\inf\left\{\INT_f(W)\colon W\in\ACC(\Gamma_1,\Gamma_2,\Gamma_3,\ldots)\right\}+\tfrac 2i\;,
\end{equation}
and
\item[(ii)] an increasing sequence $q_1^i,q_2^i,q_3^i,\ldots$ of natural numbers which is a subsequence of $q_1^{i-1},q_2^{i-1},q_3^{i-1},\ldots$ (if $i>1$), together with a sequence $\Gamma_{q_1^i}^i,\Gamma_{q_2^i}^i,\Gamma_{q_3^i}^i,\ldots$ of versions of $\Gamma_{q_1^i},\Gamma_{q_2^i},\Gamma_{q_3^i},\ldots$ which converges to $\widetilde W_i$ in the weak$^*$ topology and such that (if $i>1$) for every natural number $j$ and for every intervals $K,L\in\mathcal J_{i-1}$ it holds that
\begin{equation}\nonumber
\int_K\int_L\Gamma_{q_j^i}^i(x,y)=\int_K\int_L\Gamma_{q_j^i}^{i-1}(x,y)\;.
\end{equation}
\end{itemize}
Then we find a graphon $\overline W_{n+1}\in\LIM(\Gamma_1,\Gamma_2,\Gamma_3,\ldots)$ such that
\begin{equation}\nonumber
\INT_f(\overline W_{n+1})<\inf\left\{\INT_f(W)\colon W\in\ACC(\Gamma_1,\Gamma_2,\Gamma_3,\ldots)\right\}+\tfrac{1}{n+1}\;.
\end{equation}
Find a sequence $\overline\Gamma_{q_1^n}^{n+1},\overline\Gamma_{q_2^n}^{n+1},\overline\Gamma_{q_3^n}^{n+1},\ldots$ of versions of $\Gamma_{q_1^n},\Gamma_{q_2^n},\Gamma_{q_3^n},\ldots$ which converges to $\overline W_{n+1}$ in the weak$^*$ topology.
For every natural number $j$, let $\phi_j:I\rightarrow I$ be the measure-preserving almost bijection satisfying $\overline\Gamma_{q_j^n}^{n+1}(x,y)=\Gamma_{q_j^n}^n(\phi_j^{-1}(x),\phi_j^{-1}(y))$ for a.e. $(x,y)\in I^2$ (such an almost-bijection exists as both $\overline\Gamma_{q_j^n}^{n+1}$ and $\Gamma_{q_j^n}^n$ are versions of the same graphon $\Gamma_{q_j^n}$).
Let us fix some order of the sets from the partition $\mathcal J_n$. For every $j$, let $\mathcal I_j$ be the ordered partition of $I$ consisting of the sets $\phi_j(K)$, $K\in\mathcal J_n$, with the order given by the order of the sets from $\mathcal J_n$.
Let $r_1,r_2,r_3,\ldots$ be a subsequence of $q_1^n,q_2^n,q_3^n,\ldots$ such that for every $K\in\mathcal J_n$, the sequence ${\textbf 1}_{\phi_1(K)},{\textbf 1}_{\phi_2(K)},{\textbf 1}_{\phi_3(K)},\ldots$ is convergent in the weak$^*$ topology.
Find an accumulation point $W_{n+1}$ of the sequence $\prescript{}{\mathcal I_1}{\overline\Gamma_{r_1}^{n+1}},\prescript{}{\mathcal I_2}{\overline\Gamma_{r_2}^{n+1}},\prescript{}{\mathcal I_3}{\overline\Gamma_{r_3}^{n+1}},\ldots$ (in the weak$^*$ topology).
By Corollary~\ref{cor:zmensiEntropii}, we have
\begin{equation}\nonumber
\INT_f(W_{n+1})\le\INT_f(\overline W_{n+1})<\inf\left\{\INT_f(W)\colon W\in\ACC(\Gamma_1,\Gamma_2,\Gamma_3,\ldots)\right\}+\tfrac{1}{n+1}\;.
\end{equation}
Let $s_1,s_2,s_3,\ldots$ be a subsequence of $r_1,r_2,r_3,\ldots$ such that the sequence $\prescript{}{\mathcal I_1}{\overline\Gamma_{s_1}^{n+1}},\prescript{}{\mathcal I_2}{\overline\Gamma_{s_2}^{n+1}},\prescript{}{\mathcal I_3}{\overline\Gamma_{s_3}^{n+1}},\ldots$ converges to $W_{n+1}$ in the weak$^*$ topology.
Note that for every natural number $j$ and for every intervals $K,L\in\mathcal J_n$, it holds that
\begin{equation}\label{SkoroIntegralyPresCtverecky}
\begin{aligned}
\int_K\int_L\prescript{}{\mathcal I_j}{\overline\Gamma_{s_j}^{n+1}}(x,y)=\int_{\phi_J(K)}\int_{\phi_j(L)}\overline\Gamma_{s_j}^{n+1}(x,y)&=\int_{\phi_J(K)}\int_{\phi_j(L)}\Gamma_{s_j}^n(\phi_j^{-1}(x),\phi_j^{-1}(y))\\
&=\int_K\int_L\Gamma_{s_j}^n(x,y)\;.
\end{aligned}
\end{equation}
By Lemma~\ref{lem:aproximacePrumerama}, there is a partition $\mathcal J_{n+1}$ of $I$ into finitely many intervals of positive measure such that $\mathcal J_{n+1}$ is a refinement of $\mathcal J_n$ and such that $|\INT_f(W_{n+1})-\INT_f(W_{n+1}^{\Join\mathcal J_{n+1}})|<\tfrac 1{n+1}$. Then we clearly have
\begin{equation}\nonumber
\INT_f(W_{n+1}^{\Join\mathcal J_{n+1}})<\inf\left\{\INT_f(W)\colon W\in\ACC(\Gamma_1,\Gamma_2,\Gamma_3,\ldots)\right\}+\tfrac 2{n+1}\;.
\end{equation}
By Lemma~\ref{lem:attainaveraged}, the graphon $\widetilde W_{n+1}:=W_{n+1}^{\Join\mathcal J_{n+1}}$ is a limit (in the weak$^*$ topology) of the sequence of some versions  $\Gamma_{s_1}^{n+1},\Gamma_{s_2}^{n+1},\Gamma_{s_3}^{n+1},\ldots$ of the graphons $\prescript{}{\mathcal I_1}{\overline\Gamma_{s_1}^{n+1}},\prescript{}{\mathcal I_2}{\overline\Gamma_{s_2}^{n+1}},\prescript{}{\mathcal I_3}{\overline\Gamma_{s_3}^{n+1}},\ldots$. By the ``moreover'' part of Lemma~\ref{lem:attainaveraged}, we may further assume that for every natural number $j$ and for every intervals $P,Q\in\mathcal J_{n+1}$, we have
\begin{equation}\nonumber
\int_P\int_Q\prescript{}{\mathcal I_j}{\overline\Gamma_{s_j}^{n+1}}(x,y)=\int_P\int_Q\Gamma_{s_j}^{n+1}(x,y)\;,
\end{equation}
which, together with (\ref{SkoroIntegralyPresCtverecky}), easily implies that for every natural number $j$ and for every intervals $K,L\in\mathcal J_n$ it holds
\begin{equation}
\int_K\int_L\Gamma_{s_j}^{n+1}(x,y)=\int_K\int_L\Gamma_{s_j}^n(x,y)\;.
\end{equation}
We define $\Gamma_{k_{n+1}}:=\Gamma_{s_{n+1}^{n+1}}$, and we also define the sequence $q_1^{n+1},q_2^{n+1},q_3^{n+1},\ldots$ to be the sequence $s_1,s_2,s_3,\ldots$. This completes the construction of the sequence $\Gamma_{k_1},\Gamma_{k_2},\Gamma_{k_3},\ldots$.

Now let $W_{\min}$ be an arbitrary accumulation point (in the weak$^*$ topology) of the sequence $\Gamma_{k_1},\Gamma_{k_2},\Gamma_{k_3},\ldots$, so that in particular $W_{\min}\in\ACC(\Gamma_1,\Gamma_2,\Gamma_3,\ldots)$. It suffices to show that it holds for every $n$ that $\INT_f(W_{\min})\le\INT_f(\widetilde W_n)$ as then we clearly have by our choice of the graphons $\widetilde W_1,\widetilde W_2,\widetilde W_3,\ldots$ that
\begin{equation}\nonumber
\INT_f(W_{\min})=\min\{\INT_f(W)\colon W\in\ACC(\Gamma_1,\Gamma_2,\Gamma_3,\ldots)\}\;.
\end{equation}
But for every three natural numbers $n<m$ and $j$ and for every intervals $K,L\in\mathcal J_n$ it holds by (ii) that
\begin{equation}\nonumber
\int_K\int_L\Gamma_{q_j^m}^m(x,y)=\int_K\int_L\Gamma_{q_j^m}^n(x,y)\;,
\end{equation}
and so (as $\Gamma_{q_j^n}^n\stackrel{w^*}{\rightarrow}\widetilde W_n$ as $j\rightarrow\infty$ for every $n$)
\begin{equation}\nonumber
\int_K\int_L\widetilde W_m(x,y)=\int_K\int_L\widetilde W_n(x,y)\;.
\end{equation}
It follows that for every $n$ it holds
\begin{equation}\nonumber
\int_K\int_LW_{\min}(x,y)=\int_K\int_L\widetilde W_n(x,y)\;.
\end{equation}
The rest follows by Lemma~\ref{lem:concavetriv}.

\section{Proof of Proposition~\ref{prop:cutnormMIN}}\label{sec:propcutnormMIN}
As promised, we give two proofs of Proposition~\ref{prop:cutnormMIN}. The first one is somewhat quicker, but uses a theorem of Borgs, Chayes, and Lov\'asz~\cite{MR2594615} about uniqueness of graph limits. More precisely, the theorem states that if $U':I^2\rightarrow[0,1]$ and $U'':I^2\rightarrow[0,1]$ are two cut-norm limits of versions $\Gamma_1',\Gamma_2',\Gamma_3',\ldots$ and $\Gamma_1'',\Gamma_2'',\Gamma_3'',\ldots$ of a graphon sequence $\Gamma_1,\Gamma_2,\Gamma_3,\ldots$, then there exists a graphon $U^*:I^2\rightarrow[0,1]$ that is a cut-norm limit of versions of $\Gamma_1,\Gamma_2,\Gamma_3,\ldots$, and measure preserving transformations $\psi',\psi'':I\rightarrow I$ such that for almost every $(x,y)\in I^2$, $U'(x,y)=U^*(\psi'(x),\psi'(y))$ and $U''(x,y)=U^*(\psi''(x),\psi''(y))$. Since then, the result was proven in several different ways, see~\cite[p.221]{Lovasz2012}. Also, let us note that while all known proofs of the Borgs--Chayes--Lov\'asz theorem are complicated, none uses the compactness of the space of graphons or the Regularity lemma. So, using this result as a blackbox, we still obtain a self-contained characterization of cut-norm limits in terms of weak$^*$ limits.

So, suppose that $W:I^2\rightarrow[0,1]$ is a limit of versions of $\Gamma_1,\Gamma_2,\Gamma_3,\ldots$ in the cut-norm. 
By Theorem~\ref{thm:compactANDminimizer} and by passing to a subsequence, we may assume that there exists a minimizer $W':I^2\rightarrow[0,1]$ of $\INT_f(\cdot)$ over $\LIM(\Gamma_1,\Gamma_2,\Gamma_3,\ldots)$ which is a limit of versions of $\Gamma_1,\Gamma_2,\Gamma_3,\ldots$ in the cut-norm.
Therefore, the Borgs--Chayes--Lov\'asz theorem tells us that there exists a graphon $W^*:I^2\rightarrow [0,1]$ and measure preserving maps $\psi,\psi':I\rightarrow I$ such that $W(x,y)=W^*(\psi(x),\psi(y))$ and $W'(x,y)=W^*(\psi'(x),\psi'(y))$ for almost every $(x,y)\in I^2$. Since $\psi$ and $\psi'$ are measure preserving, we get $\INT_f(W)=\INT_f(W^*)$ and $\INT_f(W')=\INT_f(W^*)$. This finishes the proof.\qed

\bigskip

Let us now give a self-contained proof of Proposition~\ref{prop:cutnormMIN}. 
By Theorem~\ref{thm:compactANDminimizer} and by passing to a subsequence, we may assume that there exists a minimizer $W':I^2\rightarrow[0,1]$ of $\INT_f(\cdot)$ over $\LIM(\Gamma_1,\Gamma_2,\Gamma_3,\ldots)$ which is a limit of versions $\Gamma'_1,\Gamma'_2,\Gamma'_3,\ldots$ of $\Gamma_1,\Gamma_2,\Gamma_3,\ldots$ in the cut-norm.
Suppose that $W$ is a graphon with $\INT_f(W)>\INT_f(W')$. This in particular means that there exists $\delta>0$ so that 
\begin{equation}\label{eq:sd}
\|W'-U\|_1>\delta
\end{equation}
for any version $U$ of $W$. We claim that there are no versions of $\Gamma_1,\Gamma_2,\Gamma_3,\ldots$ that converge to $W$ in the cut-norm. Indeed, suppose that such versions $\Gamma^*_1,\Gamma^*_2,\Gamma^*_3,\ldots$ exist. Observe that $\delta_1(\Gamma'_n,\Gamma^*_n)=0$ for each $n$ (in fact, the infimum in the definition of $\delta_1$ is attained). Now,  \cite[Lemma~2.11]{Lovasz}\footnote{Let us stress that \cite[Lemma~2.11]{Lovasz} does not rely on the Borgs--Chayes--Lov\'asz theorem, and has a self-contained, one-page proof.} tells us that 
$$0=\liminf_n 0=\liminf_n \delta_1(\Gamma'_n,\Gamma^*_n) \ge \delta_1(W',W)\;,
$$
which is a contradiction to~\eqref{eq:sd}.
\qed

\section{Concluding remarks}

\subsection{Specific concave and convex functions}
Perhaps the most natural choice of continuous concave function is the binary entropy $H$.

An equivalent characterization to our main result is that the limit graphons are the weak$^*$ limits that \emph{maximize} $\INT_g$ for a strictly \emph{convex} function $g$. The most interesting instance of this version of the statement is that the limit graphons are weak$^*$ limits maximizing the $L^2$-norm. Note that the $L^2$-norm is an infinitesimal counterpart to the notion of the ``index'' commonly used in proving the regularity lemma.

\subsection{Regularity lemmas as a corollary}\label{ssec:reglemmas}
While the cut-distance is most tightly linked to the weak regularity lemma of Frieze and Kannan~\cite{Frieze1999}, a short reduction given in~\cite{Lovasz2007} shows that Theorem~\ref{thm:convergeFormal} implies also Szemer\'edi's regularity lemma~\cite{Szemeredi1978}, and its ``superstrong'' form,~\cite{MR1804820}. So, it is possible to obtain these regularity lemmas using the approach from this paper.\footnote{With a notable drawback that we do not obtain any quantitative bounds.}

The most remarkable difference of the current approach is that it does not use iterative index-pumping, as we explained in Section~\ref{ssec:connectionwithRL}. That is, in our proof one refinement is sufficient for the argument. Such a shortcut is available only in the limit setting, it seems.

\subsection{A conjecture about finite graphs}\label{ssec:ConjectureFiniteGraphs} Let $f:[0,1]\rightarrow \mathbb R$ be an arbitrary continuous and strictly concave function. Suppose that $G$ is an $n$-vertex graph, and let $\mathcal P=(P_i)_{i=1}^k$ be a partition of $V(G)$ into non-empty sets. Recall the notion of $\INT_f(G;\mathcal P)$ and of densities $d_{ij}$ defined in Section~\ref{ssec:connectionwithRL}. We believe that a partition that minimizes $\INT_f(G;\cdot)$, when we range over all partitions $\mathcal Q$ of $G$ with a given (but large) number of parts, provides a good approximation of $G$ in the sense of the weak regularity lemma. To formulate this conjecture, let us say that a partition $\mathcal Q$ is \emph{an $\INT_f$-minimizing partition with $k$ parts} if $\mathcal Q$ has $k$ parts and for any partition $\mathcal P$ of $V(G)$ with $k$ parts we have $\INT_f(G;\mathcal Q)\le \INT_f(G;\mathcal P)$.
\begin{conjecture}\label{conj:finite}
	Suppose that $f:[0,1]\rightarrow \mathbb R$ is a continuous and strictly concave function, and that $\epsilon>0$ is given. Then there exist numbers $M,n_0$ so that the following holds for each graph $G$ of order at least $n_0$. If $\mathcal Q$ is an $\INT_f$-minimizing partition of $V(G)$ with $M$ parts then $\mathcal Q$ is also weak $\epsilon$-regular.
\end{conjecture}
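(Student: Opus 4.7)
I would approach Conjecture~\ref{conj:finite} by contradiction combined with the compactness machinery developed in this paper. Assume the conjecture fails for some $\epsilon>0$: there exist $M_n\to\infty$, graphs $G_n$ with $|V(G_n)|\to\infty$, and $\INT_f$-minimizing partitions $\mathcal Q_n$ of $V(G_n)$ with $M_n$ parts that violate weak $\epsilon$-regularity, with witnesses $B_n\subset V(G_n)$. Apply Theorem~\ref{thm:convergeFormal} to reorder the vertices of each $G_n$ so that the adjacency graphons $U_{G_n}$ converge in cut-norm to some graphon $U^*$. Under this reordering, $\mathcal Q_n$ corresponds to a partition $\widetilde{\mathcal Q}_n$ of $I$ into $M_n$ measurable pieces, and $W_n:=U_{G_n}^{\Join\widetilde{\mathcal Q}_n}$ is the associated step-graphon satisfying $\INT_f(W_n)=\INT_f(G_n;\mathcal Q_n)$; similarly $B_n$ becomes a set $\widetilde B_n\subset I$. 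Passing to subsequences, the graphons $W_n$ converge weak$^*$ to some $W^*$ and $\mathbf{1}_{\widetilde B_n}\to\psi$ weak$^*$ for some $\psi\colon I\to[0,1]$.

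Two ingredients must now be extracted. By Lemma~\ref{lem:aproximacePrumerama} applied to $U^*$, one can choose partitions $\mathcal J_n$ of $I$ into $M_n$ intervals with $\INT_f(U^{*\Join\mathcal J_n})\to\INT_f(U^*)$; pulling these back to vertex partitions $\mathcal P_n$ of $V(G_n)$ with $M_n$ parts and passing to subsubsequences to ensure cut-norm convergence of $U_{G_n}$ to $U^*$ at a rate fast enough to control the $L^1$-distance of the corresponding step-graphons, one obtains $\INT_f(G_n;\mathcal P_n)\to\INT_f(U^*)$. Minimality of $\mathcal Q_n$ then yields $\limsup_n\INT_f(W_n)\le\INT_f(U^*)$. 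Separately, from the non-regularity witnesses, $\bigl|\int_{\widetilde B_n}\int_{\widetilde B_n}(U_{G_n}-W_n)\bigr|>\epsilon$; combining cut-norm convergence $U_{G_n}\to U^*$ with a limit passage analogous to Claim~\ref{cl:ManuelSpi} yields $\bigl|\iint\psi(x)\psi(y)\bigl(U^*(x,y)-W^*(x,y)\bigr)\bigr|\ge\epsilon$, so $U^*\neq W^*$ on a set of positive measure.

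The main obstacle is to convert these two facts into a contradiction with minimality. On the weak$^*$ topology, $\INT_f$ is only upper semicontinuous (since $f$ is concave), giving $\INT_f(W^*)\ge\limsup_n\INT_f(W_n)$---the ``wrong direction''---so no immediate contradiction follows at the level of limits. A more promising route is to argue at the finite level: split each part of $\mathcal Q_n$ along $B_n$ to obtain a refined partition of $V(G_n)$ with at most $2M_n$ parts whose $\INT_f$ is strictly smaller than $\INT_f(G_n;\mathcal Q_n)$ by $\Omega(\epsilon^2)$ (an analogue of Lemma~\ref{lem:indexpumping} for strictly concave $f$, proved via Jensen), and then greedily merge pairs of the resulting cells having close density profiles until exactly $M_n$ parts remain, with the merging cost controlled via a pigeonhole argument on $[0,1]^{2M_n}$. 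Making the pigeonhole quantitatively effective so that the total merging cost is smaller than the $\Omega(\epsilon^2)$ splitting gain is the main difficulty, and may require a tower-type lower bound on $M$ reminiscent of Szemer\'edi's original regularity lemma. For the special case $f(x)=x(1-x)$, $\INT_f(G;\mathcal Q)$-minimization is equivalent to minimizing $\|U_G-W_{G;\mathcal Q}\|_2^2$ over $M$-partitions, and one may hope to handle this via direct Frieze--Kannan-style $L^2$ arguments before extending to general strictly concave $f$.
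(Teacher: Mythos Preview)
This statement is a \emph{conjecture} in the paper; the authors explicitly write that they were not able to prove it and instead establish only the weaker Proposition~\ref{prop:finiteIndexSet}, in which the single number $M$ is replaced by a finite set $X\subset\mathbb N$ of admissible part-counts, obtained by the standard index-pumping iteration. There is therefore no proof in the paper to compare your proposal against; you are attempting an open problem.

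Your outline correctly isolates the obstruction: weak$^*$ upper semicontinuity of $\INT_f$ points the wrong way, so the limit machinery of your first two paragraphs cannot by itself yield a contradiction, and you are forced back to a finite-level argument. The split-then-merge route you sketch has a genuine dimensional barrier that is more serious than you indicate. After splitting $\mathcal Q_n$ along $B_n$ you have $2M_n$ cells whose density profiles live in $[0,1]^{2M_n}$; a pigeonhole argument for close pairs is essentially powerless when the dimension of the ambient cube matches the number of points, and no growth rate of $M$ helps since the dimension grows with $M$. This is exactly why Proposition~\ref{prop:finiteIndexSet} controls only a finite \emph{set} of values of $M$: index-pumping gives a drop of order $\epsilon^2$ when doubling the number of parts, but nothing in that argument lets one collapse back to a single $M$. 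Note also that if your merging step could be made quantitative it would constitute a purely combinatorial proof in which the limiting objects $U^*$, $W^*$, $\psi$ play no role, so the compactness machinery of the paper is not obviously the right tool here.
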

This is a finite counterpart of our main result. Indeed, the space of weak* limits of graphons in Theorem~\ref{thm:compactANDminimizer} corresponds to an averaging over infinitesimally small sets, while in Conjecture~\ref{conj:finite} we range only over partitions with $M$ parts. Of course, the much finer partitions considered in Theorem~\ref{thm:compactANDminimizer} provide an ``$\epsilon=0$ error''. 

If true, Conjecture~\ref{conj:finite} would provide a more direct link between regularity and index-like parameters than the index-pumping lemma.

While we were not able to prove Conjecture~\ref{conj:finite}, let us present here a quick proof of a somewhat weaker statement.
\begin{prop}\label{prop:finiteIndexSet}
	Suppose that $f:[0,1]\rightarrow \mathbb R$ is a continuous and strictly concave function, and that $\epsilon>0$ is given. Then there exist a finite set $X\subset \mathbb{N}$ so that for each graph $G$ there exists $M\in X$ with the following property. If $\mathcal Q$ is an $\INT_f$-minimizing partition of $V(G)$ with $M$ parts then $\mathcal Q$ is also weak $\epsilon$-regular.
\end{prop}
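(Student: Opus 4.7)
The plan is to adapt the iterative refinement argument behind the weak regularity lemma, bounding the \emph{number} of refinement steps (rather than the final number of cells) so that the candidate sizes for $M$ lie in a fixed finite set. Concretely, I would first extend Lemma~\ref{lem:indexpumping} to an arbitrary continuous strictly concave $f$: there exists $\delta=\delta(f,\epsilon)>0$ such that whenever a partition $\mathcal C$ of $V(G)$ is not weak $\epsilon$-regular, witnessed by $B\subseteq V(G)$, the refinement $\mathcal D$ obtained by splitting each $C\in\mathcal C$ into $C\cap B$ and $C\setminus B$ satisfies
\[\INT_f(G;\mathcal D) \le \INT_f(G;\mathcal C) - \delta.\]
As hinted in Section~\ref{ssec:connectionwithRL} and in~\cite{Scott}, this is obtained by replacing the defect Cauchy--Schwarz inequality in the standard proof of Lemma~\ref{lem:indexpumping} by a quantitative form of Jensen's inequality for $f$, whose uniform positive gap $\delta$ on the compact set $[0,1]$ comes from strict concavity together with continuity.

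Set $C_0:=\max_{x\in[0,1]}|f(x)|$, let $\delta$ be as above, put $k_0:=\lceil 4C_0/\delta\rceil$, and take $X:=\{2^i: 0\le i\le k_0\}$. For a graph $G$ with $n$ vertices and $n<2^{k_0}$ there are no partitions of $V(G)$ into $M:=2^{k_0}$ non-empty parts, so the conclusion of the proposition holds vacuously for that $M\in X$; only the case $n\ge 2^{k_0}$ requires work.

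For $n\ge 2^{k_0}$ I would argue by contradiction. Write $m(M):=\min\{\INT_f(G;\mathcal P):|\mathcal P|=M\}$; since refining a partition can only decrease $\INT_f$ (this is Lemma~\ref{lem:concavetriv} applied to the step-graphon $W_{G;\mathcal P}$), the function $m$ is non-increasing on $\{1,\dots,n\}$. Assume that for every $M\in X$ some $\INT_f$-minimizer with $M$ parts fails to be weak $\epsilon$-regular. For each $0\le i<k_0$, the generalized index-pumping lemma applied to such a minimizer with $2^i$ parts produces a partition with at most $2^{i+1}$ parts whose $\INT_f$-value is at most $m(2^i)-\delta$; using $n\ge 2^{k_0}\ge 2^{i+1}$, we can refine this further to have exactly $2^{i+1}$ parts without increasing $\INT_f$, so $m(2^{i+1})\le m(2^i)-\delta$. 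Telescoping gives $m(2^{k_0}) \le m(1) - k_0\delta \le m(1) - 4C_0$, which is incompatible with $m(1),m(2^{k_0})\in[-C_0,C_0]$.

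The main technical obstacle is establishing the uniform quantitative Jensen gap $\delta(f,\epsilon)$ used in the generalized index-pumping lemma, but this is a routine compactness argument on $[0,1]$; with it in hand, the above doubling-plus-vacuity scheme proves the proposition.
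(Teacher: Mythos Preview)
Your proposal is correct and follows essentially the same route as the paper: iterate the index-pumping lemma with doubling, take $X$ to be the resulting set of powers of $2$, and derive a contradiction with the boundedness of $\INT_f(G;\cdot)$. The paper in fact presents the argument only for the special case $f(x)=-x^2$ (writing ``for simplicity'' and invoking Lemma~\ref{lem:indexpumping} as stated), whereas you treat general $f$ by first establishing a uniform Jensen gap $\delta(f,\epsilon)$; you are also more careful than the paper about the small-$n$ case and about padding a refinement from at most $2^{i+1}$ to exactly $2^{i+1}$ parts. Your claim that the uniform gap follows from a compactness argument is right: reducing each refined cell-pair to a two-point mixture and observing that $\lambda(1-\lambda)|a-b|$ is bounded away from zero on a $p_{ij}$-fraction at least $\epsilon$ of pairs (by an averaging argument from the irregularity condition), one minimises the continuous positive function $(a,b,\lambda)\mapsto f(\lambda a+(1-\lambda)b)-\lambda f(a)-(1-\lambda)f(b)$ over the compact set $\{\lambda(1-\lambda)|a-b|\ge\epsilon\}\subset[0,1]^3$.
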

Actually to prove Proposition~\ref{prop:finiteIndexSet}, one just needs to go through the proof of the weak regularity lemma. For simplicity, let us assume that $h:x\mapsto -x^2$ is the negative of the usual ``index'' used in the proof of the weak regularity lemma. Let us take $X:=\{1,2,4,\ldots,2^{\lceil\frac4{\epsilon^2}\rceil}\}$ . Suppose for a contradiction that for each $i\in X$, there is an $\INT_f$-minimizing partition $\mathcal C_i$ with $i$ parts which is weak $\epsilon$-irregular. Then Lemma~\ref{lem:indexpumping} assert that there exists a partition $\mathcal P_{i+1}$ with $2i$ parts such that $\INT_h(G;\mathcal P_{i+1})< \INT_h(G;\mathcal C_i)-\frac{\epsilon^2}4$. In particular, we have
\begin{align*}
0\ge \INT_h(G;\mathcal C_1)&> \INT_h(G;\mathcal P_{2})+\frac{\epsilon^2}4\ge 
\INT_h(G;\mathcal C_{2})+\frac{\epsilon^2}4\\
&>
\INT_h(G;\mathcal P_{3})+2\cdot\frac{\epsilon^2}4\ge
\INT_h(G;\mathcal C_{3})+2\cdot\frac{\epsilon^2}4>\ldots\\
&>
\INT_h(G;\mathcal P_{i+1})+i\cdot\frac{\epsilon^2}4\ge
\INT_h(G;\mathcal C_{i+1})+i\cdot\frac{\epsilon^2}4>\ldots\\
&>
\INT_h(G;\mathcal P_{\lceil\frac4{\epsilon^2}\rceil+1})+\lceil\frac4{\epsilon^2}\rceil\cdot\frac{\epsilon^2}4\;.
\end{align*}
This is a contradiction to the fact that $\INT_h(\cdot;\cdot)\ge -1$.

\medskip
One could consider even a ``stability version'' of Conjecture~\ref{conj:finite}. That is, it may be that if $\INT_f(G;\mathcal Q)$ is close to the minimum of $\INT_f(G;\mathcal P)$ over partitions $\mathcal P$ with $M$ parts, then $\mathcal Q$ is weak $\epsilon$-regular. For example, repeating the proof of Proposition~\ref{prop:finiteIndexSet} for a set $X=\{1,2,4,\ldots,2^{\lceil\frac8{\epsilon^2}\rceil}\}$, we get there exists $M\in X$ so that any partition $\mathcal Q$ with $M$ parts for which $$\INT_{x\mapsto -x^2}(G;\mathcal Q)\le \frac{\epsilon^2}{8}+\min\left\{\INT_{x\mapsto -x^2}(G;\mathcal P) \::\:\mbox{$\mathcal P$ has $M$ parts}\right\}$$
is weak $\epsilon$-regular.

\medskip
Also, Conjecture~\ref{conj:finite} could be asked for other versions of the regularity lemma.

\subsection{Attaining the infimum in Theorem~\ref{thm:compactANDminimizer}\ref{en:compact}}\label{ssec:needsubseqinfimum} Theorem~\ref{thm:compactANDminimizer}\ref{en:compact} states that there exist a subsequence of graphons $\Gamma_{k_1},\Gamma_{k_2},\Gamma_{k_3},\ldots$ such that the infimum of $\INT_f(\cdot )$ over the set $\ACC(\Gamma_{k_1},\Gamma_{k_2},\Gamma_{k_3},\ldots)$ is attained. Recently, Jon Noel showed us that passing to a subsequence is really needed. That is, taking $f$ to be the binary entropy function, he constructed a sequence of graphons $\Gamma_1,\Gamma_2,\Gamma_3,\ldots$ such that $\inf\{\INT_f(\Gamma):\Gamma\in \ACC(\Gamma_1,\Gamma_2,\Gamma_3,\ldots)\}=0$ but there exists no $\Gamma\in \ACC(\Gamma_1,\Gamma_2,\Gamma_3,\ldots)$ with $\INT_f(\Gamma)=0$. To this end, take $(W_\ell)_{\ell=1}^\infty$ to be rescaled adjacency matrices of a sequence of quasirandom graphs with edge density say $0.5$, but replacing in each adjacency matrix one diagonal element (now represented by a square $S_\ell$ of size $\frac{1}{\ell}\times \frac{1}{\ell}$) by value say $0.7$. Let $(\Gamma_n)_{n=1}^\infty$ be a  sequence in which each graphon $W_\ell$ occurs infinitely many times.

Firstly, we claim that $\inf\{\INT_f(\Gamma):\Gamma\in \ACC(\Gamma_1,\Gamma_2,\Gamma_3,\ldots)\}=0$. To see this, take $\ell$ large. Taking a subsequence $\Gamma_{k_1},\Gamma_{k_2},\Gamma_{k_3},\ldots$ which consists only of copies of $W_\ell$, we see that $W_\ell\in \ACC(\Gamma_1,\Gamma_2,\Gamma_3,\ldots)$. Now, $\INT_f(W_\ell)=\int_x\int_y f(W_\ell(x,y))\le \frac1{\ell^2}$, since the integrand is zero everywhere except $S_\ell$.

Secondly, we claim that there is no graphon in $\ACC(\Gamma_1,\Gamma_2,\Gamma_3,\ldots)$ with zero entropy. Indeed, let us consider a weak* limit $W$ of an arbitrary sequence of versions of $W_{\ell_1},W_{\ell_2},W_{\ell_3},\ldots$. There are two cases. If the sequence $\ell_1,\ell_2,\ell_3,\ldots$ is unbounded then quasirandomness of the graphons implies that $W\equiv \frac12$. The other case is when one index $\ell$ repeats infinitely many times. In that case, due to the value of $0.7$ on $S_\ell$, the graphon $W$ cannot be $\{0,1\}$-valued, as the next lemma shows.
\begin{lem}
Suppose that $\Lambda$ is an arbitrary probability measure space with a probability measure $\lambda$, and $\alpha>0$. Suppose that $(A_s)_{s=1}^\infty$ is a sequence of functions, $A_s:\Lambda\rightarrow[0,1]$, which converges weak* to a function $A$. Suppose further that $\lambda(R_s)\ge \alpha$ for each $s\in\mathbb{N}$, where $R_s:=\{x\in\Lambda: A_s=0.7\}$. Then $A$ is not $\{0,1\}$-valued.
\end{lem}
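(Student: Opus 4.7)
My plan is to use the sequence of indicators $\mathbf{1}_{R_s}$ as an auxiliary ``mass witness''. By sequential Banach--Alaoglu, we may pass to a subsequence so that $\mathbf{1}_{R_s}$ converges in the weak$^*$ topology to some measurable $\rho\colon\Lambda\to[0,1]$. Testing against the constant function $1\in L^1(\lambda)$ gives $\int_\Lambda\rho=\lim_{s}\lambda(R_s)\ge\alpha>0$, so $\rho>0$ on a set of positive measure.

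The core observation is a pair of pointwise sandwich bounds that are preserved under weak$^*$ limits. Since $A_s=0.7$ on $R_s$ and $A_s\in[0,1]$ elsewhere, we have
\begin{equation*}
0.7\,\mathbf{1}_{R_s}(x)\ \le\ A_s(x)\ \le\ 0.7\,\mathbf{1}_{R_s}(x)+\mathbf{1}_{\Lambda\setminus R_s}(x)\ =\ 1-0.3\,\mathbf{1}_{R_s}(x)\qquad\text{for every }x\in\Lambda.
\end{equation*}
Fix an arbitrary measurable $B\subset\Lambda$ and integrate over $B$. Testing the weak$^*$ convergences $A_s\to A$ and $\mathbf{1}_{R_s}\to\rho$ against $\mathbf{1}_B\in L^1(\lambda)$ yields
\begin{equation*}
0.7\int_B\rho\ \le\ \int_B A\ \le\ \lambda(B)-0.3\int_B\rho.
\end{equation*}
Because this holds for every measurable $B$, standard measure theory converts it into the pointwise almost-everywhere inequalities $0.7\,\rho(x)\le A(x)\le 1-0.3\,\rho(x)$.

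To finish, I assume for contradiction that $A$ is $\{0,1\}$-valued. On $\{A=0\}$ the left inequality forces $\rho=0$ a.e., while on $\{A=1\}$ the right inequality $1\le 1-0.3\,\rho$ again forces $\rho=0$ a.e. Thus $\rho=0$ a.e. on all of $\Lambda$, contradicting $\int_\Lambda\rho\ge\alpha>0$. Hence $A$ cannot be $\{0,1\}$-valued.

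The only step with any subtlety is the passage from the integral inequalities to a.e. inequalities, which is completely routine; the substantive content is simply that the indicators $\mathbf{1}_{R_s}$ carry a non-vanishing amount of mass through to the weak$^*$ limit, and that the value $0.7$ is bounded away from both $0$ and $1$ so the sandwich becomes incompatible with a $\{0,1\}$-valued limit.
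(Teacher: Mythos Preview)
Your proof is correct and takes a genuinely different route from the paper's. The paper argues more directly: assuming $A$ is $\{0,1\}$-valued, it partitions $\Lambda$ into $X_0=A^{-1}(0)$ and $X_1=A^{-1}(1)$; by pigeonhole one of these, say $X_0$, satisfies $\lambda(R_s\cap X_0)\ge\alpha/2$ for infinitely many $s$, and then $\int_{X_0}A_s\ge 0.7\cdot\alpha/2$ for those $s$ while $\int_{X_0}A=0$, contradicting weak$^*$ convergence tested against $\mathbf 1_{X_0}$. No compactness is invoked at all.

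Your approach via the auxiliary limit $\rho$ and the a.e.\ sandwich $0.7\rho\le A\le 1-0.3\rho$ is more conceptual and would generalize cleanly (say, to sets where $A_s$ lies in any fixed subinterval $[a,b]\subset(0,1)$). One small caveat: your appeal to sequential Banach--Alaoglu to extract a weak$^*$-convergent subsequence of $(\mathbf 1_{R_s})$ tacitly assumes $L^1(\lambda)$ is separable, which need not hold for an \emph{arbitrary} probability space as in the hypothesis. This is harmless in the paper's intended application (where $\Lambda=I^2$) and is easily patched in general by passing to the countably generated sub-$\sigma$-algebra $\sigma(A,A_1,A_2,\ldots)$, but the paper's pigeonhole argument sidesteps the issue entirely.
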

\begin{proof}
Suppose for a contradiction that $A$ is $\{0,1\}$-valued. Let $X_0=A^{-1}(0)$ and $X_1=A^{-1}(1)$. Then for each $s\in\mathbb{N}$, we have $\lambda(R_s\cap X_0)\ge \alpha/2$ or $\lambda(R_s\cap X_1)\ge \alpha/2$. Let us consider the case that the set $I_0$ of indices $s$ for which the former inequality occurs is infinite; the other case being analogous. For each $s\in I_0$ we have
$$\int_{X_0} A_s=\int_{X_0\cap R_s} A_s+\int_{X_0\setminus R_s} A_s\ge 0.7\cdot \lambda(X_0\cap R_s)+0\cdot \lambda(X_0\setminus R_s)\ge 0.7\cdot\frac\alpha2\;.$$
On the other hand, $\int_{X_0}A=0$. So, the set $X_0$ witnesses that the functions $(A_s)_{s\in I_0}$ do not weak* converge to $A$, a contradiction.
\end{proof}

In either of the two cases above, $W$ has positive entropy.

\subsection{Hypergraphs}
The theory of limits of dense hypergraphs of a fixed uniformity was worked out in~\cite{ElekSzegedy} (using ultraproduct techniques) and in~\cite{MR3382671} (using hypergraph regularity lemma techniques), and is substantially more involved. It seems that the current approach may generalize to the hypergraph setting. This is currently work in progress.

\subsection{Role of weak* limits for other combinatorial structures}
In this paper, we have shown how to use weak* limits for sequences of graphs to obtain cut-distance limits. In the section above we indicated that a similar approach may lead to a construction of limits of hypergraphs of fixed uniformity. Of course, one can ask which other limit concepts can be approached by considering weak* limits as an intermediate step. Let us point out that limits of permutations (\emph{permutons}) are particularly simple in this sense: Limits (in the ``cut-distance'' sense) of permutations arise simply by taking weak limits (here, it is weak rather than weak*, but the difference is not important) of certain objects associated directly to permutations. That is, no counterpart to our entropy minimization step is necessary, and every weak limit already has the desired combinatorial properties. See~\cite[Section~2]{MR3053756}. These are, to the best of our knowledge, the only combinatorial structures for which weak/weak* convergence was used.

\subsection{Minimization with respect to different concave functions}
Suppose that $f$ and $g$ are two different strictly concave functions. Then for two graphons $\Gamma_1$ and $\Gamma_2$, we can have for example $\INT_f(\Gamma_1)<\INT_f(\Gamma_2)$ but $\INT_g(\Gamma_1)>\INT_g(\Gamma_2)$. As a (perhaps somewhat surprising) by-product of our main results, we cannot get such an inconsistency when searching global minima over the space of weak* limits. That is, $\Gamma_1$ achieves the minimum of $\INT_f$ on the space of weak* limits if and only if it achieves the minimum of $\INT_g$. We do not know of a more direct proof of this fact.

\subsection{Recent developments}After this paper was made available at arXiv in May 2017, the relation between the cut distance and the weak* topology was studied in more detail in~\cite{DGHRR:WeakStarAbstract} and~\cite{DGHRR:Parameters}. The main novel feature in~\cite{DGHRR:WeakStarAbstract} is an abstract approach which allows to identify convergent subsequences and cut distance limits without minimization of any parameter over the space of weak* limits. The main two theorems in~\cite{DGHRR:WeakStarAbstract} which were inspired by the present paper are the following:
\begin{thm}\label{thm:Abstract1}
Suppose that $\Gamma_1,\Gamma_2,\Gamma_3,\ldots:I^2\rightarrow[0,1]$ is a sequence of graphons. Then there exists a subsequence $\Gamma_{k_1},\Gamma_{k_2},\Gamma_{k_3},\ldots$  such that
	\begin{equation*}\ACC(\Gamma_{k_1},\Gamma_{k_2},\Gamma_{k_3},\ldots)=\LIM(\Gamma_{k_1},\Gamma_{k_2},\Gamma_{k_3},\ldots)\;.
	\end{equation*}
\end{thm}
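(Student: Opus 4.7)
The plan is to recast the theorem as the classical compactness result for Painlev\'e--Kuratowski convergence of subsets of a compact metrizable space. The ambient space here is $\mathcal W$, the set of measurable functions $I^2\to[0,1]$; endowed with the weak$^*$ topology (induced by duality with $L^1(I^2)$), $\mathcal W$ is weak$^*$-compact by Banach--Alaoglu and metrizable since $L^1(I^2)$ is separable. Fix a compatible metric $d$ on $\mathcal W$, and write $V_n\subseteq\mathcal W$ for the set of all versions of $\Gamma_n$.

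The first step would be to verify the identifications
\begin{align*}
\ACC(\Gamma_1,\Gamma_2,\ldots) &= \bigcap_{N=1}^{\infty}\overline{\bigcup_{n\ge N}V_n}\;,\\
\LIM(\Gamma_1,\Gamma_2,\ldots) &= \left\{W\in\mathcal W\colon \forall\varepsilon>0,\ \exists N,\ \forall n\ge N,\ V_n\cap B_\varepsilon(W)\neq\emptyset\right\}\;,
\end{align*}
which are precisely the Kuratowski upper and lower limits of $(V_n)_n$. The ``$\subseteq$'' direction in both equalities is immediate from the definitions. For ``$\supseteq$'' one uses metrizability to pick witnesses along a countable neighborhood base at $W$: one selects $\Gamma'_{n_k}\in V_{n_k}\cap B_{1/k}(W)$ along a fast enough increasing sequence $n_k$ and fills the remaining indices with arbitrary versions; the resulting sequence of versions of $\Gamma_1,\Gamma_2,\ldots$ has $W$ as a weak$^*$ accumulation point (witnessing the first identification) or even converges weak$^*$ to $W$ (witnessing the second).

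The theorem then reduces to the following standard fact: every sequence of subsets of a compact metrizable space admits a subsequence along which the Kuratowski upper and lower limits coincide. I would prove this by a diagonal argument over a countable base $\{U_i\}_{i=1}^{\infty}$ of $\mathcal W$: iteratively refine the sequence so that for each $i$ the statement ``$V_{k_n}\cap U_i\neq\emptyset$'' is either eventually true or eventually false along the subsequence, and then diagonalize. For the final subsequence $(V_{k_n})_n$, given any $W$ in the upper limit and any basic open $U_i\ni W$, the definition of closure forces $U_i\cap V_{k_n}\neq\emptyset$ for infinitely many $n$, which by the dichotomy upgrades to all but finitely many $n$; since $\{U_i\}$ is a neighborhood base at $W$, this places $W$ in the lower limit as well, and the reverse inclusion is trivial.

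The only nonroutine step in this plan is the translation between the quantifier-over-versions definitions of $\ACC$ and $\LIM$ and their Kuratowski characterization; once that translation is in hand, the compactness argument is an essentially classical diagonalization, and I do not expect any substantial obstacle.
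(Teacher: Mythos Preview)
Your argument is correct, but note that the paper itself does not prove this statement: Theorem~\ref{thm:Abstract1} appears in the ``Recent developments'' section as a result quoted from the follow-up paper~\cite{DGHRR:WeakStarAbstract}, and the present paper only remarks that it ``substantially generalizes Lemma~\ref{lem:ACC=LIM}''. So there is no proof here to compare against directly.

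That said, your route is genuinely different from anything in this paper. The paper's own weaker Lemma~\ref{lem:ACC=LIM} (equality of the \emph{infima} of $\INT_f$ over $\ACC$ and $\LIM$, rather than equality of the sets themselves) is obtained by an $\INT_f$-driven diagonalization: one repeatedly passes to subsequences realizing near-infimal values of $\INT_f$ and then diagonalizes. Your approach bypasses the functional $\INT_f$ entirely by recognizing $\ACC$ and $\LIM$ as the Kuratowski upper and lower limits of the version-sets $V_n$ in the compact metrizable space $(\mathcal W,\mathrm{w}^*)$, and then invoking sequential compactness of Painlev\'e--Kuratowski convergence. This is cleaner and yields the stronger set-level conclusion directly; it also makes transparent why no graphon-specific structure is needed beyond metrizability and compactness of the ambient space. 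The translation step you flag (matching the quantifier-over-versions definitions of $\ACC$/$\LIM$ to the Kuratowski formulations) is indeed routine once one notes that the initial finitely many versions can be filled in arbitrarily, and your diagonal extraction over a countable base is the standard proof of hyperspace compactness.
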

\begin{thm}
	Suppose that $\Gamma_1,\Gamma_2,\Gamma_3,\ldots:I^2\rightarrow[0,1]$ is a sequence of graphons. Then this sequence is cut-distance convergent if and only if
	\begin{equation*}\ACC(\Gamma_{1},\Gamma_{2},\Gamma_{3},\ldots)=\LIM(\Gamma_{1},\Gamma_{2},\Gamma_{3},\ldots)\;.
	\end{equation*}
\end{thm}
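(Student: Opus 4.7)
Plan: the two directions call for quite different arguments. The forward implication ($\Rightarrow$) will be essentially topological and will exploit the flexibility in the choice of versions that cut-distance convergence provides, in order to upgrade any weak$^*$ accumulation point to a genuine weak$^*$ limit. The backward implication ($\Leftarrow$) will proceed by contradiction, combining Proposition~\ref{prop:cutnormMIN} with Theorem~\ref{thm:compactANDminimizer}\ref{en:improve}.

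For ($\Rightarrow$), assume $\delta_\square(\Gamma_n,W)\to 0$ and take $U\in\ACC(\Gamma_1,\Gamma_2,\ldots)$; since $\LIM\subseteq\ACC$ trivially, I need only build versions of the full sequence weak$^*$ converging to $U$. The definition of $\ACC$ supplies, for every weak$^*$ neighborhood $V$ of $U$, a version of $\Gamma_n$ lying in $V$ for infinitely many $n$. Cut-distance convergence then gives $\delta_\square(\Gamma_n,\Gamma_m)\to 0$, so for any sufficiently large $n$ I can pick a version of $\Gamma_n$ cut-norm close to some version of $\Gamma_m$ lying in $V$; composing with the measure-preserving map that witnesses the latter, and using that cut-norm closeness dominates closeness in the weak$^*$ topology on graphons, I obtain a version of $\Gamma_n$ in any slightly enlarged weak$^*$ neighborhood of $U$. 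Diagonalizing against a countable base of weak$^*$ neighborhoods of $U$ produces versions $\widetilde\Gamma_n$ of $\Gamma_n$ with $\widetilde\Gamma_n\to U$ in the weak$^*$ topology, so $U\in\LIM$.

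For ($\Leftarrow$), suppose $\ACC=\LIM$ but, for contradiction, that $(\Gamma_n)$ is not cut-distance convergent. Invoking the cut-distance compactness contained in Theorem~\ref{thm:compactANDminimizer} twice, I extract subsequences $(\Gamma_{k_n})$ and $(\Gamma_{l_n})$ and graphons $W,W'$ with $\delta_\square(W,W')>0$ such that some versions of $(\Gamma_{k_n})$ cut-norm converge to $W$ and some versions of $(\Gamma_{l_n})$ cut-norm converge to $W'$. Using the corresponding versions along each subsequence and arbitrary versions elsewhere, both $W$ and $W'$ belong to $\ACC(\Gamma_1,\Gamma_2,\ldots)=\LIM(\Gamma_1,\Gamma_2,\ldots)$; restricting the version sequence witnessing $W\in\LIM$ to indices $(l_n)$ gives $W\in\LIM(\Gamma_{l_1},\Gamma_{l_2},\ldots)$. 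Since $(\Gamma_{l_n})$ cut-distance converges to $W'$ with $\delta_\square(W,W')>0$, the graphon $W$ cannot be a cut-distance accumulation point of $(\Gamma_{l_n})$, so Theorem~\ref{thm:compactANDminimizer}\ref{en:improve} produces $\widetilde W\in\ACC(\Gamma_{l_1},\Gamma_{l_2},\ldots)$ with $\INT_f(\widetilde W)<\INT_f(W)$. Then $\widetilde W\in\ACC(\Gamma_1,\Gamma_2,\ldots)=\LIM(\Gamma_1,\Gamma_2,\ldots)$, and restricting to indices $(k_n)$ gives $\widetilde W\in\LIM(\Gamma_{k_1},\Gamma_{k_2},\ldots)$. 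But Proposition~\ref{prop:cutnormMIN} applied to the cut-norm convergent sequence $(\Gamma_{k_n})$ with limit $W$ asserts that $W$ minimizes $\INT_f$ over $\LIM(\Gamma_{k_1},\Gamma_{k_2},\ldots)$, so $\INT_f(W)\le\INT_f(\widetilde W)$, contradicting the strict inequality above.

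The main obstacle I foresee is the careful bookkeeping in the backward direction, where I must keep a clean distinction between the full sequence and each of the two subsequences when invoking either Proposition~\ref{prop:cutnormMIN} or Theorem~\ref{thm:compactANDminimizer}\ref{en:improve}, and where the interplay of cut-norm convergence on subsequences with weak$^*$ membership for the full sequence has to be handled symmetrically on both sides. In the forward direction, the only delicate point is the quantitative translation between cut-norm and weak$^*$ closeness, which is routine because cut-norm controls integration against indicators of axis-aligned rectangles, and rectangular step functions are dense in $L^1(I^2)$.
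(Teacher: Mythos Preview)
The paper does not contain a proof of this theorem. It appears only in the ``Recent developments'' subsection as one of two results quoted from the follow-up paper~\cite{DGHRR:WeakStarAbstract}; no argument is given here. So there is nothing to compare your attempt against.

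That said, your proposal is essentially correct and makes efficient use of the tools that \emph{are} proved in the present paper. A few remarks on each direction.

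\textbf{Forward direction.} Your argument is sound. The one place that deserves a line of justification is the passage ``cut-norm closeness dominates weak$^*$ closeness''. Concretely: a basic weak$^*$ neighbourhood of $U$ is determined by finitely many $L^1$-functions $g_1,\ldots,g_m$ and a tolerance $\varepsilon$; approximating each $g_i$ in $L^1$ by a finite linear combination of rectangle indicators, the pairing $\langle \cdot,g_i\rangle$ is controlled (uniformly over graphons) by the cut-norm up to an arbitrarily small additive error. This gives, for each basic neighbourhood $V$, a $\delta>0$ such that $\|A-B\|_\square<\delta$ implies $A\in V$ whenever $B\in \tfrac12 V$, which is exactly what your diagonalisation needs. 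Also note that $\delta_\square(\Gamma_n,\Gamma'_m)=\delta_\square(\Gamma_n,\Gamma_m)$ since $\Gamma'_m$ is a version of $\Gamma_m$, so the ``composing with the measure-preserving map'' step is immediate.

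\textbf{Backward direction.} The logic is clean and correct. Two small points worth making explicit when you write it up. First, when you invoke Theorem~\ref{thm:compactANDminimizer}\ref{en:improve} for the subsequence $(\Gamma_{l_n})$, the hypothesis ``$W$ is not a cut-norm accumulation point'' should be read (as the paper intends, cf.\ the paragraph preceding Theorem~\ref{thm:compactANDminimizer}) in the cut-\emph{distance} sense; this follows from $\delta_\square(\Gamma_{l_n},W)\ge \delta_\square(W,W')-\delta_\square(\Gamma_{l_n},W')\to\delta_\square(W,W')>0$, and hence holds for every choice of versions. Second, the existence of two subsequential cut-distance limits $W,W'$ with $\delta_\square(W,W')>0$ uses compactness (Theorem~\ref{thm:convergeFormal}, a consequence of Theorem~\ref{thm:compactANDminimizer}) together with the standard fact that a non-convergent sequence in a compact metric space has at least two distinct subsequential limits.

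In summary: your proof is a valid self-contained derivation of the quoted theorem from the results actually proved in this paper (Theorem~\ref{thm:compactANDminimizer} and Proposition~\ref{prop:cutnormMIN}); the paper itself defers the proof to~\cite{DGHRR:WeakStarAbstract}.
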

In particular, note that Theorem~\ref{thm:Abstract1} substantially generalizes Lemma~\ref{lem:ACC=LIM}. Actually, investigating possible generalizations of Lemma~\ref{lem:ACC=LIM} was the starting point for~\cite{DGHRR:WeakStarAbstract}.

\medskip 
Besides this abstract approach,  some further graphon parameters that can replace $\INT_f(\cdot)$ in Theorem~\ref{thm:compactANDminimizer} are found in~\cite{DGHRR:Parameters}. These include, for example, the negative of the density of any even cycle, $-t(C_{2\ell},\cdot)$. On the other hand, in another very recent paper, Kr\'al', Martins, Pach, and Wrochna~\cite{KMPW:StepSidorenko} identify a large class of (bipartite) graphs $H$ for which $-t(H,\cdot)$ fails to identify cut distance limits. The problem of characterizing graphs $H$ which this property is related to the Sidorenko conjecture and to norming graphs motivated by a question of Lov\'asz and studied first in~\cite{Hat:Siderenko}.

Also, the machinery introduced in~\cite{DGHRR:Parameters} gives a short proof of a version of Theorem~\ref{thm:compactANDminimizer} which even allows to drop the requirement on the continuity of $f$.

\section*{Acknowledgements}
This work was done while Jan Hladk\'y was enjoying a lively atmosphere of the Institute for Geometry at TU Dresden, being hosted by Andreas Thom there.

We thank Dan Kr\'al and Oleg Pikhurko for encouraging conversations on the subject, Jon Noel for comments on an earlier version of the manuscript, and to Svante Janson and Guus Regts for bringing several important references to our attention. We also thank Jon Noel for his contribution included in Section~\ref{ssec:needsubseqinfimum}.

Finally, we thank two anonymous referees for their comments, and in particular, for pointing out a gap in the proof of Lemma~\ref{lem:neostraNerovnost}.

\appendix
\section{The weak$^*$ topology}
Suppose that $X$ is a Banach space and denote by $X^*$ its dual. Then the weak$^*$ topology on $X^*$ is the coarsest topology on $X^*$ such that all mappings of the form $X^*\ni x^*\mapsto x^*(x)$, $x\in X$, are continuous. Recall that if the space $X$ is separable then by the sequential Banach--Alaoglu Theorem (see e.g.~\cite[Theorem 1.9.14]{Tao:EpsilonRoomI}), the unit ball of $X^*$ is sequentially compact. This means that every bounded sequence of elements of the dual space $X^*$ contains a weak$^*$-convergent subsequence.

In this paper, we are interested in the case when $X$ is the Banach space $L^1(\Omega)$ of all integrable functions on some probability space $\Omega$. (Depending on our needs, the probability space $\Omega$ will be chosen to be either the unit interval $I$ equipped with the one-dimensional Lebesgue measure or the unit square $I^2$ equipped with the two-dimensional Lebesgue measure). The space $L^1(\Omega)$ is equipped with the norm $\|f\|_1=\int_{\Omega}|f(x)|$, $f\in L^1(\Omega)$. In this setting, the dual $X^*=(L^1(\Omega))^*$ is isometric to the space $L^{\infty}(\Omega)$ of all bounded measurable functions on $\Omega$, equipped with the norm $\|g\|_{\infty}=\text{ess}\sup_{x\in\Omega}|g(x)|$.
The duality between $L^1(\Omega)$ and $L^{\infty}(\Omega)$ is given by the formula $\langle g,f\rangle=\int_{\Omega}f(x)g(x)$ for $g\in L^{\infty}(\Omega)$ and $f\in L^1(\Omega)$.
This means that a sequence $g_1,g_2,g_3,\ldots$ of elements of $L^{\infty}(\Omega)$ converges to $g\in L^{\infty}(\Omega)$ if and only if $\lim_{n\rightarrow\infty}\int_{\Omega}f(x)g_n(x)=\int_{\Omega}f(x)g(x)$ for every $f\in L^1(\Omega)$.

Now consider the Banach space $X=L^1(I^2)$ of all integrable functions defined on the unit square $I^2$ (which is equipped with the two-dimensional Lebesgue measure).
Standard arguments show that the weak$^*$ topology on its dual space $L^{\infty}(I^2)$ can be equivalently generated by mappings of the form $L^\infty(I^2)\ni g\mapsto\int_A\int_Bg(x,y)$ where $A,B$ are measurable subsets of $I$. That is, the weak$^*$ topology can be equivalently generated only by characteristic functions of measurable rectangles (instead of all integrable functions on $I^2$).
If we restrict this topology only to the space of all graphons $W:I^2\rightarrow[0,1]$ defined on $I^2$ then it is easy to see that this restricted topology is generated only by mappings of the form $W\mapsto\int_A\int_AW(x,y)$ where $A$ is a measurable subset of $I$ (this is because each graphon is symmetric by the definition).
This is the topology we refer to when we talk about convergence of graphons in the weak$^*$ topology.
So this means that a sequence $W_1,W_2,W_3,\ldots$ of graphons defined on $I^2$ converges to a graphon $W$ defined on $I^2$ if and only if $\lim_{n\rightarrow\infty}\int_A\int_AW_n(x,y)=\int_A\int_AW(x,y)$ for every measurable subset $A$ of $I$.
Note that the space of all graphons defined on $I^2$ is a weak$^*$ closed subset of the unit ball of $L^{\infty}(I^2)$, and so it is sequentially compact by the sequential Banach--Alaoglu Theorem (as the space $L^1(I^2)$ is separable).

While crucial to our arguments, it is worth noting that the Banach--Alaoglu Theorem is not a particularly deep statement and follows easily from Tychonoff's theorem for powers of compact spaces (and actually the version for countable powers is sufficient).
\bibliographystyle{plain}
\bibliography{bibl.bib}
\end{document}